\newtheorem{theorem}{Theorem}[section]
\newtheorem{lemma}[theorem]{Lemma}
\newtheorem{Proposition}[theorem]{Proposition}
\theoremstyle{definition}
\newtheorem{definition}[theorem]{Definition}
\newtheorem{remark}[theorem]{Remark}
\newtheorem{Conjecture}[theorem]{Conjecture}
\numberwithin{equation}{section}
\begin{document}

\date{}
\title[Duffin-Schaeffer conjecture]
{The Duffin-Schaeffer type conjectures in various local fields}

\author{Liangpan Li}

\address{Department of Mathematics,
Loughborough University, LE11 3TU, UK}
 \email{L.Li@lboro.ac.uk, liliangpan@gmail.com}

\subjclass[2000]{11J83}

\keywords{Duffin-Schaeffer conjecture, Hausdorff measure,
Borel-Cantelli lemma, formal Laurent series, $p$-adic approximation}

\date{}

\begin{abstract}
This paper discovers a new phenomenon about the Duffin-Schaeffer
conjecture, which claims that
$\lambda(\cap_{m=1}^{\infty}\cup_{n=m}^{\infty}{\mathcal E}_n)=1$ if
and only if $\sum_n\lambda({\mathcal E}_n)=\infty$, where $\lambda$
denotes the Lebesgue measure on $\mathbb{R}/\mathbb{Z}$,
\[
{\mathcal E}_n={\mathcal E}_n(\psi)=\bigcup_{m=1 \atop
(m,n)=1}^n\big(\frac{m-\psi(n)}{n},\frac{m+\psi(n)}{n}\big),
\]
$\psi$ is any non-negative arithmetical function. Instead of
studying $\cap_{m=1}^{\infty}\cup_{n=m}^{\infty}{\mathcal E}_n$ we
introduce an even fundamental object $\cup_{n=1}^{\infty}{\mathcal
E}_n$ and conjecture there exists a universal constant $C>0$ such
that
\[\lambda(\bigcup_{n=1}^{\infty}{\mathcal E}_n)\geq C\min\{\sum_{n=1}^{\infty}\lambda({\mathcal E}_n),1\}.\]
It is shown that this conjecture is equivalent to the
Duffin-Schaeffer conjecture. Similar phenomena are found in the
fields of $p$-adic numbers and formal Laurent series. As a
byproduct, we  answer conditionally a question of Haynes by showing
that one can always use the quasi-independence on average method to
deduce $\lambda(\cap_{m=1}^{\infty}\cup_{n=m}^{\infty}{\mathcal
E}_n)=1$ as long as the Duffin-Schaeffer conjecture is true. We also
show among several others that two conjectures of Haynes, Pollington
and Velani are equivalent to the Duffin-Schaeffer conjecture, and
introduce for the first time a weighted version of the second
Borel-Cantelli lemma
   to the study of the Duffin-Schaeffer conjecture.
\end{abstract}

\maketitle


\section{Introduction to the Duffin-Schaeffer conjecture}

Throughout the paper we use the following notations:
\begin{itemize}
\item $p$ denotes a prime number,
\item  $n,h$ denote positive integers,
\item $\varphi(n)$ denotes the Euler phi function, \item $\lambda$ denotes the
Lebesgue measure on $\mathbb{R}/\mathbb{Z}$, \item $f(x)\nearrow A$
($\searrow A$) means $f(x)$ tends increasingly (decreasingly) to
$A$, \item $f(x)\ll g(x)$ or $g(x)\gg f(x)$ means $|f(x)|\leq
C|g(x)|$ for some universal constant $C>0$, $f(x)\asymp g(x)$ means
both $f(x)\ll g(x)$ and $g(x)\ll f(x)$,
\item $B(x,r)$ ($\overline{B}(x,r)$) denotes the open (closed) ball with center  $x$ and radius $r$
in a given metric space.
\end{itemize}

In this section we will introduce case by case the Duffin-Schaeffer
type conjectures in the fields $\mathbb{R}$ of real numbers,
$\mathbb{Q}_p$ of $p$-adic numbers, and $\mathbb{F}((X^{-1}))$ of
formal Laurent series.

\subsection{Duffin-Schaeffer conjecture}

For any non-negative function $\psi:\mathbb{N}\rightarrow\mathbb{R}$
and any positive integer $n$, we define ${\mathcal
E}_n(\psi)\subseteq \mathbb{R}/\mathbb{Z}$ by
\begin{equation}
{\mathcal E}_n={\mathcal E}_n(\psi)=\bigcup_{m=1 \atop
(m,n)=1}^n\big(\frac{m-\psi(n)}{n},\frac{m+\psi(n)}{n}\big).
\end{equation}
Let $W(\psi)$ denote the collection of points
$x\in\mathbb{R}/\mathbb{Z}$  which fall in infinitely many of the
sequence of the sets $\{{\mathcal E}_n\}_{n\in\mathbb{N}}$, that is,
\begin{equation}
W(\psi)=\limsup_{n\rightarrow\infty}{\mathcal E}_n=\bigcap_{m=1}^{\infty}\bigcup_{n=m}^{\infty}{\mathcal E}_n.
\end{equation}
The Lebesgue measure of ${\mathcal E}_n$ is obviously bounded above
by $\frac{2\psi(n)}{n}\varphi(n)$. According to the first
Borel-Cantelli lemma, we see that if the series
\begin{equation}
\label{formula 11} \sum_{n=1}^{\infty}\frac{\psi(n)}{n}\varphi(n)
\end{equation} is convergent,
then $W(\psi)$ is of zero Lebesgue measure. In 1942 Duffin and
Schaeffer (\cite{DuffinSchaeffer}) proposed the following conjecture
in metric number theory:

\begin{Conjecture}\label{conj 11}  If
(\ref{formula 11}) is divergent, then $\lambda(W(\psi))=1$.
\end{Conjecture}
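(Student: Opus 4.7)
The convergence half is already handled in the excerpt by the first Borel-Cantelli lemma, so the entire question is to show that divergence of \eqref{formula 11} implies $\lambda(W(\psi)) = 1$. The natural framework is the classical second Borel-Cantelli route. My first step would be to invoke Gallagher's zero-one law, which asserts that $\lambda(W(\psi)) \in \{0,1\}$ for any non-negative $\psi$, reducing the target from full measure to merely positive measure. The second step is to try to establish, for some universal $C > 0$ and every admissible $\psi$, the finitary bound
\[
\lambda\Big(\bigcup_{n \leq N} {\mathcal E}_n\Big) \;\geq\; C \min\Big\{\sum_{n \leq N} \lambda({\mathcal E}_n),\,1\Big\},
\]
which is exactly the reformulation the abstract of this paper announces as equivalent to Conjecture~\ref{conj 11}, and from which $\lambda(W(\psi)) > 0$ follows by passing $N \to \infty$ along tails.

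To produce such a lower bound, the plan is to verify quasi-independence on average for $\{{\mathcal E}_n\}$. After possibly passing to a subsequence $\{n_k\}$ along which $\sum_k \lambda({\mathcal E}_{n_k})$ still diverges, one would seek
\[
\sum_{j,k \leq K} \lambda({\mathcal E}_{n_j} \cap {\mathcal E}_{n_k}) \;\ll\; \Big(\sum_{k \leq K} \lambda({\mathcal E}_{n_k})\Big)^2,
\]
whereupon the Chung-Erd\H{o}s inequality supplies the desired measure-theoretic lower bound. Unfolding the left-hand side, the mutual measure $\lambda({\mathcal E}_m \cap {\mathcal E}_n)$ becomes a count of coprime pairs $(a,m), (b,n)$ with $|an - bm|$ small, so the entire weight of the argument is displaced onto an arithmetic correlation estimate governed by the GCD structure of the support of $\psi$.

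The principal obstacle — and the reason Conjecture~\ref{conj 11} has resisted attack since 1942 — lies precisely at this correlation step. Anomalous overlaps between ${\mathcal E}_m$ and ${\mathcal E}_n$ are produced whenever $m$ and $n$ share a large common factor built out of many small primes, and every purely elementary sieving attempt to dominate the contribution of such pairs has failed. Partial progress in restricted regimes (Erd\H{o}s-Vaaler for $\psi(n) \in \{0, c/n\}$; Pollington-Vaughan for the higher-dimensional analogue; the extra-divergence theorem of Beresnevich-Harman-Pollington-Velani; the divisor-structure results of Haynes-Pollington-Velani) strongly suggests that the right strategy is a dichotomy splitting pairs $(m,n)$ into a generic class, which obeys naive quasi-independence, and a structured class, whose bulk contribution must be bounded by a genuinely combinatorial argument on an auxiliary GCD graph. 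My honest expectation is that the plan above will proceed smoothly as far as such a dichotomy permits, and that the residual kernel — a combinatorial control on the densest subgraphs of that GCD graph — constitutes the one hard step that no elementary manipulation inside the framework of the present paper can be expected to resolve unaided.
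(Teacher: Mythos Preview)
The statement you are attempting is Conjecture~\ref{conj 11}, and the paper does \emph{not} prove it: it is explicitly presented as ``one of the most fundamental unsolved problems in metric number theory'', and the paper's contribution is to establish various \emph{equivalent} reformulations (Theorems~\ref{theorem 17}, \ref{theorem 15}, \ref{theorem 53}) rather than to settle the conjecture itself. So there is no proof in the paper to compare your proposal against.

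Your proposal is also not a proof, and you say so yourself: the quasi-independence estimate you need,
\[
\sum_{j,k\leq K}\lambda({\mathcal E}_{n_j}\cap{\mathcal E}_{n_k})\ll\Big(\sum_{k\leq K}\lambda({\mathcal E}_{n_k})\Big)^2,
\]
is precisely the missing ingredient, and you correctly identify the obstruction as anomalous overlaps coming from pairs $(m,n)$ with large smooth common factors. This diagnosis is fully consistent with the paper's own discussion in Section~\ref{dsc2}, where the heart of the problem is framed as establishing $B_h\asymp S_h$ whenever $S_h\leq 1$ (Conjectures~\ref{73} and~\ref{72}), and where the available overlap bounds (\ref{duffin schaeffer estimate}), (\ref{formula 2222}), (\ref{formula 25}) are shown to fall short in general. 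Your allusion to a ``GCD graph'' and a dichotomy between generic and structured pairs is prescient: this is exactly the architecture of the eventual resolution by Koukoulopoulos and Maynard (2020), which postdates the paper. But within the framework of this paper, the step you flag as unresolved is indeed unresolved, and nothing in your outline closes it.
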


Although various partial results are known (see \cite{Harman98} for
details and  references before 2000 and
\cite{Aistleitner,BHHV,Haynes,Li} for recent progresses), the full
conjecture represents one of the most fundamental unsolved problems
in metric number theory.

The main purpose of this paper is to establish various equivalent
forms for the original Duffin-Schaeffer conjecture. We begin with a
classical estimate of Pollington and Vaughan (\cite[formula
(3)]{PV2}) claiming that if $\psi(n)\leq\frac{n}{2\varphi(n)}$, then
$\lambda({\mathcal E}_n)\geq\frac{\psi(n)\varphi(n)}{n}$. This
implies unconditionally that
 \begin{equation}\label{formula 141414}
\min\{\frac{\psi(n)\varphi(n)}{n},\frac{1}{2}\}\leq\lambda({\mathcal
E}_n)\leq\min\{2\frac{\psi(n)\varphi(n)}{n},1\}.
\end{equation}
Instead of studying $W(\psi)$ we introduce an even fundamental
object
 \begin{equation}
 Z(\psi)=\bigcup_{n=1}^{\infty}{\mathcal
E}_n,
\end{equation}
whose Lebesgue measure is bounded above by
$\min\{2\sum_{n=1}^{\infty}\frac{\psi(n)\varphi(n)}{n},1\}$. Similar
to (\ref{formula 141414}) we conjecture this trivial upper bound is
essentially a non-trivial lower bound despite some loss of constant:

\begin{Conjecture}\label{Conjecture 16} There exists a universal
constant $C>0$ such that for any non-negative function $\psi$,
\begin{equation}\lambda(Z(\psi))\geq C\min\{\sum_{n=1}^{\infty}\frac{\psi(n)\varphi(n)}{n},1\}.\end{equation}
\end{Conjecture}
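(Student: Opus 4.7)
The plan is to prove the stated conjecture by establishing its equivalence with the Duffin--Schaeffer conjecture, as promised in the abstract. The two implications are of very different character.

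For the direction conjecture~$\Rightarrow$~DSC, assume $\sum_{n}\psi(n)\varphi(n)/n=\infty$, and for each $m\in\mathbb{N}$ set $\psi_{m}(n)=\psi(n)$ when $n\geq m$ and $\psi_{m}(n)=0$ otherwise. Then $Z(\psi_{m})=\bigcup_{n\geq m}\mathcal{E}_{n}$ and the series for $\psi_{m}$ still diverges, so the conjecture yields $\lambda(Z(\psi_{m}))\geq C$ for every $m$. Since the sets $Z(\psi_{m})$ decrease to $W(\psi)$, continuity of Lebesgue measure gives $\lambda(W(\psi))\geq C>0$, and Gallagher's zero--one law upgrades this to $\lambda(W(\psi))=1$.

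For the converse, fix $\psi$. If $S:=\sum_{n}\psi(n)\varphi(n)/n=\infty$, DSC directly furnishes $\lambda(W(\psi))=1\leq\lambda(Z(\psi))$; assume henceforth $S<\infty$. If some single term $\psi(n)\varphi(n)/n$ is already $\gtrsim 1$, then (\ref{formula 141414}) makes $\mathcal{E}_{n}$ of measure $\gtrsim 1$ on its own and there is nothing to prove. Otherwise $T:=\sum_{n}\lambda(\mathcal{E}_{n})\asymp S$, and the Chung--Erd\H{o}s inequality
\[
\lambda(Z(\psi))\;\geq\;\frac{T^{2}}{\displaystyle\sum_{n,m}\lambda(\mathcal{E}_{n}\cap\mathcal{E}_{m})}
\]
reduces matters to a quasi-independence bound $\sum_{n\neq m}\lambda(\mathcal{E}_{n}\cap\mathcal{E}_{m})\ll T^{2}$; a case split on $T\leq 1$ versus $T>1$ then delivers $\lambda(Z(\psi))\gtrsim\min(T,1)\asymp\min(S,1)$, as required.

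The main obstacle is to obtain the quasi-independence estimate from the mere truth of DSC. The Pollington--Vaughan pairwise bounds on $\lambda(\mathcal{E}_{n}\cap\mathcal{E}_{m})$ are unconditional, but summing them against an arbitrary $\psi$ is essentially as hard as DSC, and for convergent $\psi$ the DSC statement says nothing directly. I expect the cleanest device to be a replication or dilation construction: build an auxiliary $\widetilde{\psi}$ out of rescaled copies of $\psi$ so that $\sum_{n}\widetilde{\psi}(n)\varphi(n)/n=\infty$, invoke DSC to conclude $\lambda(W(\widetilde{\psi}))=1$, and transfer the information back to a lower bound on $\lambda(Z(\psi))$ through the geometric relation between $W(\widetilde{\psi})$ and suitable unions of translates or dilates of $Z(\psi)$. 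Producing a constant $C$ uniform in $\psi$ is where the real care will be required.
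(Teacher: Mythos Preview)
Your direction Conjecture~$\Rightarrow$~DSC is correct and matches the paper: apply the conjecture to the tails $\psi_m$, use continuity of measure on the decreasing sets $Z(\psi_m)\searrow W(\psi)$, and finish with Gallagher's zero--one law.

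The reverse direction has a genuine gap. The Chung--Erd\H{o}s route cannot be made to work here: the truth of DSC furnishes no pairwise overlap bound for an arbitrary \emph{convergent} $\psi$, as you yourself observe, and the vaguely proposed replication device (build $\widetilde{\psi}$ with divergent sum, apply DSC to $W(\widetilde{\psi})$, transfer back) lacks any concrete mechanism for the transfer. The paper's argument is entirely different and uses no overlap estimates whatsoever. It introduces
\[
A_N=\inf\bigl\{\lambda(Z(\psi)):\mathrm{supp}(\psi)\subset[N,\infty)\ \text{bounded},\ S(\psi)\geq 1\bigr\}
\]
and proves DSC $\Leftrightarrow \lim_N A_N>0$ by \emph{contrapositive}: if every $A_N=0$, one glues disjointly supported $\psi_k$ with $S(\psi_k)\geq 1$ and $\lambda(Z(\psi_k))\leq k^{-2}$ into a single $\psi$ with $S(\psi)=\infty$ yet, by the first Borel--Cantelli lemma, $\lambda(W(\psi))=0$, contradicting DSC. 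A short argument then upgrades $\lim_N A_N>0$ to $A_1>0$. Finally, the case $S(\psi)<1$ is handled by an elementary \emph{scaling lemma}: for $t\geq 1$ one has $\lambda(Z(t\psi))\leq t\,\lambda(Z(\psi))$, proved by writing $Z(\psi)$ (for bounded support) as a finite disjoint union of arcs and noting that $Z(t\psi)$ lies in the union of the $t$-dilates of those arcs; taking $t=1/S(\psi)$ gives $\lambda(Z(\psi))\geq \lambda(Z(t\psi))/t\geq A_1\,S(\psi)$. Neither the gluing contrapositive nor the scaling lemma appears in your proposal, and these are precisely the two ideas that do the work.
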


At this stage (see also the end of Section \ref{dsc2}) it is hard to
convince the readers why the above conjecture is possibly true, but
we can show

\begin{theorem}\label{theorem 17}
Conjecture \ref{Conjecture 16} and the Duffin-Schaeffer conjecture
are  equivalent.
\end{theorem}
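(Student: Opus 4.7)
I establish the two implications separately; the easy one is Conjecture \ref{Conjecture 16}$\,\Rightarrow\,$Conjecture \ref{conj 11}.

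Given $\psi$ with $\sum_n\psi(n)\varphi(n)/n=\infty$, let $\psi_m(n)=\psi(n)\mathbf{1}_{n\geq m}$. Tails of a divergent non-negative series are themselves divergent, so Conjecture \ref{Conjecture 16} supplies $\lambda(Z(\psi_m))\geq C$ for every $m$. Since $Z(\psi_m)=\bigcup_{n\geq m}\mathcal{E}_n$ is decreasing in $m$ with intersection $W(\psi)$, $\lambda(W(\psi))=\lim_m\lambda(Z(\psi_m))\geq C>0$; Gallagher's zero-one law then forces $\lambda(W(\psi))=1$.

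For the converse I argue by contrapositive: assume Conjecture \ref{Conjecture 16} fails and build $\psi^{\ast}$ with divergent sum but $\lambda(W(\psi^{\ast}))<1$. Writing $S_\psi=\sum\psi(n)\varphi(n)/n$, the key technical step is a \emph{localization lemma}: failure of Conjecture \ref{Conjecture 16} propagates to the family of $\psi$ supported in $[N_0,\infty)$, for every $N_0$. To prove this, suppose instead that some $c_0>0$ satisfies $\lambda(Z(\psi))\geq c_0\min\{S_\psi,1\}$ for all $\psi$ supported in $[N_0,\infty)$, and split an arbitrary $\psi=\psi^-+\psi^+$ across $[1,N_0)$ and $[N_0,\infty)$. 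Either $S_{\psi^+}\geq\min\{S_\psi,1\}/2$, so $\lambda(Z(\psi))\geq\lambda(Z(\psi^+))\geq(c_0/2)\min\{S_\psi,1\}$; or $\psi^-$ carries at least half the mass, in which case pigeonholing over the fewer than $N_0$ nonzero terms of $\psi^-$ combined with the lower bound in (\ref{formula 141414}) produces $n^{\ast}<N_0$ with $\lambda(\mathcal{E}_{n^{\ast}})\geq\min\{S_\psi,1\}/(2N_0)$. Either way Conjecture \ref{Conjecture 16} would hold globally with constant $\min\{c_0/2,\,1/(2N_0)\}$, contradicting the failure hypothesis.

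Armed with the localization lemma, iteratively choose failing $\psi_k$ with finite support in pairwise disjoint intervals $[N_k,M_k]$, $N_{k+1}>M_k$, with $S_{\psi_k}\in[1,2]$, and with $\lambda(Z(\psi_k))<2^{-k-1}$. Setting $\psi^{\ast}(n)=\psi_k(n)$ on $[N_k,M_k]$ and $0$ elsewhere, disjointness yields $S_{\psi^{\ast}}=\sum_k S_{\psi_k}=\infty$ while $\lambda(W(\psi^{\ast}))\leq\lambda(Z(\psi^{\ast}))\leq\sum_k\lambda(Z(\psi_k))<1/2$, violating the Duffin--Schaeffer conjecture.

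The main obstacle is the normalization $S_{\psi_k}\in[1,2]$, since the localization lemma alone does not bound $S_{\psi_k}$ from below. One handles this by nesting: a single $\psi_k$ of moderate mass is itself assembled from several smaller-mass failing functions on further disjoint sub-blocks, exploiting the additivity of $S$ against the subadditivity of $\lambda(Z)$ across disjoint supports. Choosing the intermediate ratios to shrink geometrically keeps the sum of $\lambda(Z)$ bounds finite while the sum of $S$ contributions grows past $1$; the remaining bookkeeping is routine.
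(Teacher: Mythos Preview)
Your overall architecture matches the paper's: the easy direction is identical, your localization lemma is essentially the paper's Claim~2 (that a positive lower bound on tails propagates to all of $\mathbb{N}$), and your gluing construction is the paper's Claim~1 in the ``$\Rightarrow$'' direction. The difference, and the genuine gap, is in how you obtain building blocks $\psi_k$ with $S_{\psi_k}\in[1,2]$. Your localization lemma delivers, for each $N$ and each $\epsilon>0$, some $\psi$ supported in $[N,\infty)$ with $\lambda(Z(\psi))<\epsilon\min\{S_\psi,1\}$, but gives no lower bound on $S_\psi$. Your nesting proposal does not repair this: you control only the ratios $r_j$, not the masses $S_{\phi_j}$, and nothing prevents each $S_{\phi_j}$ from being, say, below $2^{-j}$, so that $\sum_j S_{\phi_j}$ never reaches $1$. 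The claim that ``the sum of $S$ contributions grows past $1$'' is exactly the unproved point, and the bookkeeping is not routine.

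The paper closes precisely this gap with a scaling lemma (Claim~3): for $t\geq1$ one has $\lambda(Z(t\psi))\leq t\,\lambda(Z(\psi))$, proved by decomposing $Z(\psi)$ (for $\psi$ of bounded support) into finitely many disjoint arcs $(x_k-r_k,x_k+r_k)$ and checking geometrically that each constituent interval of $Z(t\psi)$ lands in the corresponding dilated arc $(x_k-tr_k,x_k+tr_k)$. Once this is in hand, any counterexample $\psi$ with $S_\psi<1$ and $\lambda(Z(\psi))<\epsilon S_\psi$ scales to $t\psi$ with $t=1/S_\psi$, giving $S_{t\psi}=1$ and $\lambda(Z(t\psi))\leq t\,\lambda(Z(\psi))<\epsilon$. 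This single application of Claim~3 is what your nesting argument is trying to substitute for; with it, your gluing construction goes through unchanged.
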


Similar phenomena will be confirmed in the fields of  $p$-adic
numbers and formal Laurent series. At the moment we concentrate on
the classical case.

At the end of the paper \cite{Haynes p-adic} Haynes asked  whether
there exists a non-negative function $\psi$ for which one cannot use
the quasi-independence on average method (see the last part of this
section for an introduction) to deduce $\lambda(W(\psi))=1$.  As a
byproduct of Theorem \ref{theorem 17}, we can give a conditional but
almost best possible answer: If the Duffin-Schaeffer conjecture is
true, then the answer is NO.

Next let us recall three related conjectures due to Sanju Velani and
his coauthors. Letting $f:\mathbb{R}^{+}\rightarrow\mathbb{R}^{+}$
be a dimension function, and letting ${\mathcal H}^{f}$ be the
corresponding Hausdorff $f$-measure on $\mathbb{R}/\mathbb{Z}$,
Beresnevich and Velani (\cite[Conejcture 2]{BV06})  proposed the
following Hausdorff measure version of the Duffin-Schaeffer
conjecture:

\begin{Conjecture}\label{conj 12}   Let $f$ be a dimension function such that $r^{-1}f(r)$ is monotonic. If
\begin{equation}\label{formula
12}\sum_{n=1}^{\infty}f(\frac{\psi(n)}{n})\varphi(n)\end{equation}
is divergent, then ${\mathcal H}^{f}(W(\psi))={\mathcal
H}^{f}(\mathbb{R}/\mathbb{Z})$.
\end{Conjecture}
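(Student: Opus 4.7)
The plan is to reduce Conjecture~\ref{conj 12} to the classical Duffin--Schaeffer Conjecture~\ref{conj 11} by invoking the \emph{Mass Transference Principle} (MTP) of Beresnevich and Velani, which converts a Lebesgue-measure statement about a $\limsup$ set of balls into a Hausdorff $f$-measure statement. In the one-dimensional form needed here: if $\{B(x_i, r_i)\}_{i\geq 1}$ is a sequence of balls in $\mathbb{R}/\mathbb{Z}$ with $r_i\to 0$, and $f$ is a dimension function with $r^{-1}f(r)$ monotonic, then
\[
\lambda\bigl(\limsup_i B(x_i, f(r_i))\bigr)=1 \ \Longrightarrow\ \mathcal{H}^f\bigl(\limsup_i B(x_i, r_i)\bigr)=\mathcal{H}^f(\mathbb{R}/\mathbb{Z}).
\]

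Conditional on Conjecture~\ref{conj 11}, the argument proceeds in three steps. First, introduce the auxiliary approximation function $\widetilde\psi(n):=n\,f(\psi(n)/n)$. The intervals comprising $\mathcal{E}_n(\widetilde\psi)$ are exactly the transferred balls $B(m/n, f(\psi(n)/n))$ appearing in the MTP, so $W(\widetilde\psi)$ is precisely the transferred $\limsup$ set; moreover the Duffin--Schaeffer divergence condition for $\widetilde\psi$ is
\[
\sum_n \widetilde\psi(n)\varphi(n)/n \;=\; \sum_n \varphi(n)\,f(\psi(n)/n)\;=\;\infty,
\]
which is precisely the hypothesis of Conjecture~\ref{conj 12}. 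Second, apply Conjecture~\ref{conj 11} to $\widetilde\psi$ (after the harmless truncation $\widetilde\psi(n)\mapsto\min\{\widetilde\psi(n),\,n/(2\varphi(n))\}$ so that~(\ref{formula 141414}) yields $\lambda(\mathcal{E}_n(\widetilde\psi))\asymp \widetilde\psi(n)\varphi(n)/n$, preserving the divergence) to deduce $\lambda(W(\widetilde\psi))=1$. Third, invoke the MTP with the family of balls $\{B(m/n,\psi(n)/n):1\leq m\leq n,\,(m,n)=1\}$ (enumerated so that radii tend to zero) to upgrade this Lebesgue statement to the desired $\mathcal{H}^f(W(\psi))=\mathcal{H}^f(\mathbb{R}/\mathbb{Z})$.

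The converse implication---Conjecture~\ref{conj 12} applied with $f(r)=r$ recovers Conjecture~\ref{conj 11}---is immediate, since $\mathcal{H}^f=\lambda$ in this case and the sum~(\ref{formula 12}) collapses to~(\ref{formula 11}). Beyond the tautological obstruction that Conjecture~\ref{conj 11} is itself open, the principal technical points are: (i) verifying the shrinking-radius hypothesis of the MTP, which fails only if $\psi(n)/n\not\to 0$ along the contributing $n$, in which case $W(\psi)$ is already $\lambda$-conull and the $\mathcal{H}^f$-conclusion follows directly from the admissibility of $f$; and (ii) checking that the monotonicity assumption on $r^{-1}f(r)$ is exactly the condition demanded by the MTP in dimension one, so no extra regularity of $f$ need be imposed. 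I expect step~(i), together with the bookkeeping required to match the MTP's ball-sequence framework to the doubly-indexed family parametrised by coprime pairs $(n,m)$, to be the only non-routine obstacle once Conjecture~\ref{conj 11} is assumed.
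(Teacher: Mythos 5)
This statement is an open conjecture in the paper: no proof is given, only the citation that Beresnevich and Velani established its equivalence with the Duffin--Schaeffer conjecture via their Mass Transference Principle (\cite[Theorems 1 \& 2]{BV06}). Your argument is precisely that known conditional reduction --- setting $\widetilde\psi(n)=n\,f(\psi(n)/n)$ so that the divergence of (\ref{formula 12}) becomes the Duffin--Schaeffer hypothesis for $\widetilde\psi$, deducing $\lambda(W(\widetilde\psi))=1$ from Conjecture \ref{conj 11}, and then transferring to ${\mathcal H}^f$ via the MTP --- and it is correct as a proof of the equivalence, but, as you acknowledge, it does not and cannot establish the conjecture unconditionally, which is exactly the status the paper assigns it.
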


The original statement of Conjecture \ref{conj 12} is in fact a
$k$-dimensional analogue, which has already been confirmed
 for  $k\geq2$ (\cite[Corollary 1]{BV06}). As a consequence of their  Mass
 Transference Principle (\cite[Theorem 2]{BV06}), Beresnevich and
Velani  showed   (\cite[Theorem 1]{BV06}) that Conjecture \ref{conj
12} is equivalent to the Duffin-Schaeffer conjecture.

Recently, Haynes, Pollington and Velani (\cite[Conjectures 1 \&
2]{Haynes}) proposed the following ``weakening" versions of the
Duffin-Schaeffer conjecture by assuming extra divergence:

\begin{Conjecture}\label{Conjecture 13}
Let $f$ be a dimension function such that $r^{-1}f(r)\nearrow\infty$
 as $r\rightarrow0$. If (\ref{formula 11}) is divergent, then ${\mathcal H}^f(W(\psi))=\infty$.
\end{Conjecture}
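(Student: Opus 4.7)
The plan is to give a conditional proof by reducing Conjecture \ref{Conjecture 13} to Conjecture \ref{conj 12} (equivalent to the Duffin--Schaeffer conjecture via the Beresnevich--Velani Mass Transference Principle). Two auxiliary lemmas are needed: (i) under the gauge hypothesis $r^{-1}f(r)\nearrow\infty$ as $r\to 0$, divergence of (\ref{formula 11}) automatically upgrades to divergence of (\ref{formula 12}); and (ii) under the same gauge hypothesis, $\mathcal{H}^f(\mathbb{R}/\mathbb{Z})=+\infty$. Granted these, Conjecture \ref{conj 12} produces $\mathcal{H}^f(W(\psi))=\mathcal{H}^f(\mathbb{R}/\mathbb{Z})=+\infty$, as required.

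For (i), I would split the divergent series in (\ref{formula 11}) according to the size of $\psi(n)/n$. One may assume $\psi(n)\leq n/2$ (otherwise $\mathcal{E}_n=\mathbb{R}/\mathbb{Z}$ and the conclusion is trivial). Pick $r_0>0$ small enough that $f(r)\geq r$ for all $r\leq r_0$; this is possible because $r^{-1}f(r)\to\infty$. On the indices with $\psi(n)/n<r_0$ the pointwise inequality $f(\psi(n)/n)\geq\psi(n)/n$ transfers divergence from (\ref{formula 11}) to (\ref{formula 12}) term by term. On the indices with $\psi(n)/n\geq r_0$, divergence of (\ref{formula 11}) restricted to this range forces $\sum_{n\,:\,\psi(n)/n\geq r_0}\varphi(n)=\infty$, and then the monotonicity bound $f(\psi(n)/n)\geq f(r_0)$ delivers divergence of (\ref{formula 12}) on this range. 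Either way, (\ref{formula 12}) diverges.

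For (ii), given any cover $\{U_i\}$ of $\mathbb{R}/\mathbb{Z}$ by sets with $\mathrm{diam}(U_i)\leq\delta$, the trivial Lebesgue inequality $\sum_i\mathrm{diam}(U_i)\geq 1$ combined with $f(r)/r\geq f(\delta)/\delta$ for $r\leq\delta$ (a direct consequence of $r^{-1}f(r)\nearrow\infty$) yields $\sum_i f(\mathrm{diam}(U_i))\geq f(\delta)/\delta$. Taking the infimum over covers and then letting $\delta\to 0$ gives $\mathcal{H}^f(\mathbb{R}/\mathbb{Z})\geq\lim_{\delta\to 0} f(\delta)/\delta=+\infty$.

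The main obstacle is that this plan is fundamentally conditional on the Duffin--Schaeffer conjecture: the two auxiliary lemmas are elementary, but the decisive passage from divergence of (\ref{formula 11}) to a full-measure statement about $W(\psi)$ is precisely what DS asserts. An unconditional proof would presumably require a new tool giving a direct lower bound on $\mathcal{H}^f(W(\psi))$ from (\ref{formula 11}) alone, perhaps a weighted second Borel--Cantelli estimate sharp enough to resolve $f$-measure rather than just Lebesgue measure.
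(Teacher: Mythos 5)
Your proposal correctly establishes the conditional implication ``Duffin--Schaeffer $\Rightarrow$ Conjecture \ref{Conjecture 13}'', and both auxiliary steps check out. For (i), splitting the indices at a threshold $r_0$ with $f(r)\geq r$ for $r\leq r_0$, and using monotonicity of $f$ on the complementary range together with $\psi(n)/n\leq 1/2$ to force $\sum\varphi(n)=\infty$ there, does transfer divergence of (\ref{formula 11}) to (\ref{formula 12}); the only caveat is that discarding the indices with $\psi(n)>n/2$ is ``trivial'' in the sense you intend only when ${\mathcal E}_n=\mathbb{R}/\mathbb{Z}$ for infinitely many $n$, and otherwise one simply modifies $\psi$ at finitely many places. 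For (ii), the bound $\sum_if(r_i)\geq (f(\delta)/\delta)\sum_ir_i\gg f(\delta)/\delta\to\infty$ indeed gives ${\mathcal H}^f(\mathbb{R}/\mathbb{Z})=\infty$, and since $r^{-1}f(r)\nearrow\infty$ is in particular monotonic, Conjecture \ref{conj 12} applies and yields ${\mathcal H}^f(W(\psi))={\mathcal H}^f(\mathbb{R}/\mathbb{Z})=\infty$.

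However, this is precisely the direction that the paper, following Haynes, Pollington and Velani, treats as already known and does not reprove. The statement is an open conjecture; the paper's actual content about it is Theorem \ref{prop 21}, whose substance is the \emph{converse} implication, namely that Conjecture \ref{Conjecture 13} implies the Duffin--Schaeffer conjecture. That direction is proved by contradiction via Lemma \ref{lemma 22}: given any Lebesgue-null set $A$ (in particular a hypothetical $W(\psi)$ with (\ref{formula 11}) divergent and $\lambda(W(\psi))=0$), one constructs, using the ``no slowest diverging series'' device from Section 2, a dimension function $f$ with $r^{-1}f(r)\nearrow\infty$ and ${\mathcal H}^f(A)=0$, contradicting the conjectured ${\mathcal H}^f(W(\psi))=\infty$. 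Your proposal does not touch this converse, so it recovers only half of the equivalence asserted in Theorem \ref{theorem 15}. Your closing assessment is nonetheless accurate: no unconditional proof is available, and the paper's point is exactly that the conjecture is equivalent to, rather than genuinely weaker than, the Duffin--Schaeffer conjecture.
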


\begin{Conjecture}\label{Conjecture 14}
Let $f:[0,\infty)\rightarrow\mathbb{R}$ be an increasing
non-negative function such that $r^{-1}f(r)\rightarrow0$ as
$r\rightarrow0$. If (\ref{formula 12}) is divergent, then
$\lambda(W(\psi))=1$.
\end{Conjecture}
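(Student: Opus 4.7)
The strategy is to leverage the extra divergence encoded by $r^{-1}f(r) \to 0$ to apply a quasi-independence-on-average argument on a carefully chosen subsequence, thereby establishing $\lambda(W(\psi)) > 0$ and then invoking Gallagher's zero--one law. First, if $\psi(n)\varphi(n)/n \not\to 0$, then by (\ref{formula 141414}) we have $\lambda(\mathcal{E}_n) \geq c > 0$ for infinitely many $n$, so reverse Fatou gives $\lambda(W(\psi)) \geq c$ and Gallagher's law concludes. We may therefore assume $\psi(n)\varphi(n)/n \to 0$, and in particular $\lambda(\mathcal{E}_n) \asymp \psi(n)\varphi(n)/n$.

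Writing $h(r) = f(r)/r$ for $r > 0$, the hypothesis (\ref{formula 12}) reads
\[ \sum_{n=1}^{\infty} \frac{\psi(n)\varphi(n)}{n}\, h\!\bigl(\tfrac{\psi(n)}{n}\bigr) = \infty, \qquad h(r) \to 0 \ \text{as}\ r \to 0. \]
Partition $\mathbb{N}$ into dyadic levels $L_k = \{ n : 2^{-k-1} < h(\psi(n)/n) \leq 2^{-k} \}$ for $k \geq 0$. Since the weighted sum diverges, either some single level $L_k$ already satisfies $\sum_{n \in L_k} f(\psi(n)/n)\varphi(n) = \infty$ --- in which case $\sum_{n \in L_k} \psi(n)\varphi(n)/n$ diverges with a built-in amplification factor $\asymp 2^k$ relative to the $f$-weighted sum --- or one extracts an infinite sequence of levels that jointly do so. This amplification is the lever supplied by the extra divergence.

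On the chosen level $L_k$, the aim is to verify the Chung--Erd\H{o}s inequality
\[ \liminf_{N \to \infty} \frac{\bigl(\sum_{n \in L_k,\, n \leq N} \lambda(\mathcal{E}_n)\bigr)^2}{\sum_{m,n \in L_k,\, m,n \leq N} \lambda(\mathcal{E}_m \cap \mathcal{E}_n)} > 0, \]
which yields $\lambda(\limsup_{n \in L_k} \mathcal{E}_n) > 0$, and hence $\lambda(W(\psi)) = 1$ by Gallagher. The numerator enjoys the $2^k$-amplification from the previous step, while the denominator is to be controlled via the Pollington--Vaughan overlap estimate $\lambda(\mathcal{E}_m \cap \mathcal{E}_n) \ll \lambda(\mathcal{E}_m)\lambda(\mathcal{E}_n)\,G(m,n)$, with $G(m,n)$ a number-theoretic factor built from the primes dividing $mn/(m,n)^2$.

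The main obstacle is precisely this overlap control: establishing the Chung--Erd\H{o}s inequality for arbitrary $\psi$ whose classical divergence sum diverges is tantamount to the full Duffin--Schaeffer conjecture. The hope is that restriction to a single level $L_k$, where the scale ratio $f(\psi(n)/n)/(\psi(n)/n)$ is essentially a constant $\asymp 2^{-k}$, combined with the amplification factor, can be pitted against the number-theoretic factor $G(m,n)$ by exploiting a decoupling between $\psi$ restricted to $L_k$ and the arithmetic of the denominators. Making this work in full generality would appear to require either a genuinely new overlap estimate tailored to the dimension function $f$, or a finer decomposition of $L_k$ according to the prime structure of $n$; this is where the real difficulty lies.
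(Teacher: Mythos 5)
Your proposal does not close the argument, and the gap is not a technical one that a sharper overlap estimate could repair. Note first that the statement you are asked about is stated as a \emph{conjecture} in the paper; the paper never proves it unconditionally. What the paper proves (Theorem \ref{prop 23}) is that Conjecture \ref{Conjecture 14} is \emph{equivalent} to the Duffin--Schaeffer conjecture, and the nontrivial direction of that equivalence is exactly the obstruction you run into. Given any $\psi$ with $\sum_n\varphi(n)\psi(n)/n=\infty$ (normalized via Erd\H{o}s--Vaaler and Pollington--Vaughan so that $1/n\leq\psi(n)\leq1/2$ on the support), the ``no slowest diverging series'' fact produces $g(n)\searrow0$ with $\sum_n\varphi(n)\psi(n)g(n)/n=\infty$, and then $f(r)=r\cdot g(\max\{r^{-1/2},1\})$ is increasing with $r^{-1}f(r)\rightarrow0$ and makes (\ref{formula 12}) divergent. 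So a proof of Conjecture \ref{Conjecture 14} for \emph{all} admissible $f$ is a proof of the full Duffin--Schaeffer conjecture; there is no genuine ``extra divergence'' to exploit, because $f$ can be chosen so that the extra decay of $r^{-1}f(r)$ is arbitrarily slow relative to the given $\psi$.

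This is why your level decomposition cannot deliver the Chung--Erd\H{o}s inequality. On a single level $L_k$ the weight $h(\psi(n)/n)$ is constant up to a factor $2$, so the hypothesis ``$\sum_{n\in L_k}f(\psi(n)/n)\varphi(n)=\infty$'' is literally equivalent to ``$\sum_{n\in L_k}\psi(n)\varphi(n)/n=\infty$'': the advertised amplification factor $\asymp2^k$ is a fixed constant multiplying a divergent series and carries no information. The only residual constraint is that $\psi(n)/n$ lies in the preimage $h^{-1}((2^{-k-1},2^{-k}])$, but since $f$ is arbitrary this preimage can be any prescribed set of scales, so the single-level case is the unrestricted Duffin--Schaeffer conjecture for $\psi\cdot\mathbf{1}_{L_k}$. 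Your closing paragraph correctly identifies that the overlap control ``is tantamount to the full Duffin--Schaeffer conjecture,'' but the subsequent hope that restriction to a level plus amplification can be ``pitted against'' the arithmetic factor is exactly what Theorem \ref{prop 23} rules out. The approach does succeed for \emph{specific, rapidly decaying} $f$ --- this is the content of \cite[Thm.~2]{BHHV} and of Theorem \ref{theorem 19} in the paper, where $S_h\geq\exp(\alpha h\log h)$ genuinely beats the Pollington--Vaughan factor $P(m,n)$ --- but it cannot work uniformly over all $f$ with $r^{-1}f(r)\rightarrow0$.
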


The proposers also believe that the latter two conjectures are in
principle easier to establish than the Duffin-Schaeffer conjecture,
but we can show

\begin{theorem}\label{theorem 15} Conjecture \ref{Conjecture 13}, Conjecture \ref{Conjecture
14} and the Duffin-Schaeffer conjecture are all equivalent.
\end{theorem}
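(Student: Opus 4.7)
My plan is to prove DSC $\Rightarrow$ Conjecture~\ref{Conjecture 13} and DSC $\Rightarrow$ Conjecture~\ref{Conjecture 14} by direct unpacking of hypotheses, and then to handle the two reverse directions by separate constructive arguments: one produces a weight function $f$ satisfying the hypothesis of Conjecture~\ref{Conjecture 14}, and the other produces a dimension function $F$ whose existence contradicts the conclusion of Conjecture~\ref{Conjecture 13} under the assumption $\lambda(W(\psi))=0$. For the easy directions, assume DSC. Under the hypothesis of Conjecture~\ref{Conjecture 14}, $r^{-1}f(r)\to 0$ forces $f(r)\le r$ on some $(0,\delta)$; splitting the $n$'s by $\psi(n)/n<\delta$ versus $\psi(n)/n\ge\delta$ (where $f(\psi(n)/n)\le f(1)\le\delta^{-1}f(1)\cdot\psi(n)/n$) gives $\sum f(\psi(n)/n)\varphi(n)\le C\sum\psi(n)\varphi(n)/n$ for some $C>0$, so divergence of~(\ref{formula 12}) forces divergence of~(\ref{formula 11}) and DSC yields $\lambda(W(\psi))=1$. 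Under the hypothesis of Conjecture~\ref{Conjecture 13}, DSC already gives $\lambda(W(\psi))=1$, and any cover of $W(\psi)$ by balls $B(x_i,r_i)$ with $r_i<\delta$ satisfies $\sum f(2r_i)\ge\inf_{r<2\delta}(r^{-1}f(r))\cdot\lambda(W(\psi))\to\infty$ as $\delta\to 0$, giving $\mathcal{H}^f(W(\psi))=\infty$.

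For Conjecture~\ref{Conjecture 14} $\Rightarrow$ DSC, assume~(\ref{formula 11}) diverges. I may assume $\psi(n)/n\to 0$: otherwise~(\ref{formula 141414}) gives $\lambda(\mathcal{E}_n)\ge 1/2$ infinitely often, so $\lambda(W(\psi))\ge\limsup\lambda(\mathcal{E}_n)\ge 1/2$ and Gallagher's zero-one law forces $\lambda(W(\psi))=1$. Set $B(r)=\sum_{n\colon\psi(n)/n\ge r}\psi(n)\varphi(n)/n$ (non-increasing in $r$ with $B(0+)=\infty$) and $f(r)=r/B(r)$, extended to an increasing non-negative function on $[0,\infty)$. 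Then $r^{-1}f(r)=1/B(r)\to 0$, and enumerating the $n$'s so that $\psi(n_1)/n_1\ge\psi(n_2)/n_2\ge\cdots$ with $T_k=\sum_{j\le k}\psi(n_j)\varphi(n_j)/n_j$,
\[
\sum_n f\!\left(\frac{\psi(n)}{n}\right)\varphi(n)=\sum_k\frac{T_k-T_{k-1}}{T_k}=\infty
\]
(the sum picks up at least $1/2$ on each doubling interval of $T$). Conjecture~\ref{Conjecture 14} then delivers $\lambda(W(\psi))=1$.

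For Conjecture~\ref{Conjecture 13} $\Rightarrow$ DSC, assume~(\ref{formula 11}) diverges and, for contradiction, $\lambda(W(\psi))=0$ (by Gallagher). I would cover $W(\psi)$ by open sets $U_k$ with $\lambda(U_k)<2^{-k}$, subdivide each $U_k$ into disjoint intervals of lengths in $[\delta_{k+1},\delta_k]$ for a fixed $\delta_k\searrow 0$, and define $F(r)=rH(r)$ with $H$ continuous, monotonically increasing as $r\to 0$, and $H(\delta_k)=2^{k/2}$. Then $F$ is a dimension function, $r^{-1}F(r)=H(r)\nearrow\infty$, and
\[
\mathcal{H}^F_{\delta_k}(W(\psi))\le\sum_i F(\ell_{k,i})\le H(\delta_{k+1})\lambda(U_k)\le 2^{(k+1)/2}\cdot 2^{-k}\to 0,
\]
so $\mathcal{H}^F(W(\psi))=0$, contradicting Conjecture~\ref{Conjecture 13}.

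The hardest part will be the subdivision step in the last paragraph: components of $U_k$ shorter than $\delta_{k+1}$ cannot simply be enlarged to length $\delta_{k+1}$ without potentially inflating the total length when the number of such components is large. Overcoming this rests on a classical Rogers-type result asserting that every Lebesgue null Borel set in $\mathbb{R}$ admits a dimension function $F$ with $r^{-1}F(r)\nearrow\infty$ and $\mathcal{H}^F=0$; this is the technical heart of the reverse implication. The remaining bookkeeping---strict monotonicity and continuity of $f$ and $F$, the interpolation of $H$ between the prescribed values $H(\delta_k)$, and the divergence/convergence estimates themselves---is then routine.
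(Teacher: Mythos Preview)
Your overall strategy matches the paper's: the easy directions are essentially as in \cite{Haynes}, and for the reverse directions you (like the paper) reduce Conjecture~\ref{Conjecture 13} $\Rightarrow$ DSC to the statement ``every Lebesgue-null set carries a dimension function $F$ with $r^{-1}F(r)\nearrow\infty$ and $\mathcal{H}^F=0$,'' while for Conjecture~\ref{Conjecture 14} $\Rightarrow$ DSC you construct an auxiliary $f$ with $r^{-1}f(r)\to 0$ for which (\ref{formula 12}) still diverges.

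Your construction for Conjecture~\ref{Conjecture 14} $\Rightarrow$ DSC is genuinely different from the paper's. The paper first normalizes $\psi$ via the Erd\H{o}s--Vaaler and Pollington--Vaughan theorems to ensure $1/n\le\psi(n)\le 1/2$, then invokes the ``no slowest diverging series'' trick to find a decreasing $g(n)\searrow 0$ with $\sum \varphi(n)\psi(n)g(n)/n=\infty$, and sets $f(r)=r\cdot g(\max\{1/\sqrt r,1\})$. Your $f(r)=r/B(r)$ with $B(r)=\sum_{\psi(n)/n\ge r}\psi(n)\varphi(n)/n$ is more direct and avoids both normalizations; the price is that you must separately dispose of the case $\psi(n)/n\not\to 0$ (your Gallagher argument there is fine) and you should group terms by the \emph{value} of $\psi(n)/n$ rather than enumerate individually, since with ties $B(\psi(n_k)/n_k)$ can exceed your $T_k$ and your displayed equality becomes only an inequality in the wrong direction. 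Grouped by level the computation gives $\sum_j a_j/S_j$, which does diverge.

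For Conjecture~\ref{Conjecture 13} $\Rightarrow$ DSC you correctly isolate the key lemma but then defer it to the literature as a ``classical Rogers-type result.'' The paper explicitly remarks that it could not locate this statement in standard references and supplies a self-contained proof (Lemma~\ref{lemma 22}). That proof, incidentally, sidesteps exactly the subdivision difficulty you flag: rather than chopping each $U_k$ into pieces of prescribed size, it takes the raw covering intervals from \emph{all} the covers, bins them by radius into classes $F_s$ with radii in $[e^{-s-1},e^{-s})$, observes that $\sum_s|F_s|e^{-s}<\infty$, and then applies the ``no fastest converging series'' trick to produce a slowly growing $g$ with $\sum_s|F_s|g(s)e^{-s}<\infty$, setting $f(r)=r\cdot g(\max\{1,-1-\log r\})$. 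This gives $\mathcal{H}^f=0$ without ever needing to control the number of short components.
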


Nevertheless, the interested readers may still study Conjectures
\ref{Conjecture 13} \& \ref{Conjecture 14} for some particularly
chosen functions to get some partial results (see \cite[Problems 1
\& 2]{Haynes}). The main tool for proving Theorem \ref{theorem 15}
is
 the well-known fact that there is
no fastest converging or slowest diverging series (see e.g.
\cite{Rudin}).

 To summarize, we have the equivalence between the
Duffin-Schaeffer conjecture for $W(\psi)$, the newest Lebesgue
measure version Conjecture \ref{Conjecture 16} for $Z(\psi)$ as well
as the Hausdorff measure version Conjecture \ref{conj 12} for
$W(\psi)$. Conjecture \ref{Conjecture 16} might be more easier to
attack than Conjecture \ref{conj 12} as it is a standard Lebesgue
measure statement rather than a Hausdorff measure one.

\subsection{$p$-adic approximation}
 For any prime $p$, let  $\mathbb{Q}_p$ denote the
 field of $p$-adic numbers with absolute value $|\cdot|_p$, and let
 $\mathbb{Z}_p$ denote the ring of integers
\[\mathbb{Z}_p=\{x\in\mathbb{Q}_p:|x|_p\leq1\}.\]
Since $\mathbb{Q}_p$ is a locally compact topological group under
addition, there exists a unique Haar measure $\mu_p$ on
$\mathbb{Q}_p$ such that $\mu_p(\mathbb{Z}_p)=1$. For any
non-negative function $\psi:\mathbb{N}\rightarrow\mathbb{R}$ and any
positive integer $n$, we define
\begin{equation}{\mathcal K}_n(\psi)=\bigcup_{a=-n\atop (a,n)=1}^n\Big\{x\in\mathbb{Z}_p:\big|x-
\frac{a}{n}\big|_p\leq\frac{\psi(n)}{n}\Big\},\end{equation} and set
\begin{equation}W_p(\psi)=\limsup_{n\rightarrow\infty}{\mathcal K}_n(\psi)=\bigcap_{m=1}^{\infty}\bigcup_{n=m}^{\infty}{\mathcal K}_n(\psi).
\end{equation}

Recently, Haynes (\cite{Haynes p-adic}) studied in detail the metric
Diophantine approximation in $\mathbb{Q}_p$ by attacking the next
Duffin-Schaeffer type conjecture (\cite[Conjecture 1]{Haynes
p-adic}):

\begin{Conjecture}\label{conj710}
$\mu_p(W_p(\psi))=1$ if and only if
$\sum_{n\in\mathbb{N}}\mu_p({\mathcal K}_n(\psi))=\infty$.
\end{Conjecture}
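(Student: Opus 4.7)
The convergence direction follows immediately from the first Borel-Cantelli lemma, so the content lies entirely in showing that divergence of $\sum_{n=1}^{\infty}\mu_p({\mathcal K}_n(\psi))$ forces $\mu_p(W_p(\psi))=1$. I would first pin down the measure $\mu_p({\mathcal K}_n(\psi))$: since $\mathbb{Q}_p$ is ultrametric, any two closed balls are either disjoint or nested, so the $\varphi(n)$ constituent balls of ${\mathcal K}_n(\psi)$ either coincide or do not meet, and a short divisibility computation gives $\mu_p({\mathcal K}_n(\psi))\asymp\psi(n)\varphi(n)/n$ after absorbing the rounding of $\psi(n)/n$ down to a non-positive power of $p$ into the implied constant. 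This plays the role of the Pollington-Vaughan estimate (\ref{formula 141414}).

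Next I would establish a Gallagher-style zero-one law $\mu_p(W_p(\psi))\in\{0,1\}$, so that it suffices to prove positivity. For positivity I would apply the weighted second Borel-Cantelli (Paley-Zygmund) lemma to the sequence $\{{\mathcal K}_n(\psi)\}$, which demands the quasi-independence on average bound
\[
\sum_{m,n\leq N}\mu_p({\mathcal K}_m\cap{\mathcal K}_n)\ll\Big(\sum_{n\leq N}\mu_p({\mathcal K}_n)\Big)^{2}
\]
along some subsequence $N\to\infty$. By ultrametricity, ${\mathcal K}_m\cap{\mathcal K}_n$ is a disjoint union of $p$-adic balls indexed by pairs of reduced residues $(a\bmod m,\,b\bmod n)$ for which $|am-bn|_p$ is no larger than the greater of the two radii; after clearing denominators by $\gcd(m,n)$ this becomes an arithmetic counting problem of Pollington-Vaughan type, with the gcd contribution affected only by the single prime $p$.

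The principal obstacle is establishing the overlap bound above uniformly in $\psi$. It is the direct $p$-adic analogue of the outstanding difficulty in the classical Duffin-Schaeffer conjecture: if $n$ and $m$ share many prime factors with $p$, the overlaps can be anomalously large, and nothing elementary rules out a pathological $\psi$ conspiring with those denominators. Partial progress should be available under extra divergence (in the spirit of Conjecture \ref{Conjecture 14}) or under an Erd\H{o}s-type divisor moment condition, reproducing and extending the results of Haynes \cite{Haynes p-adic}; but the unconditional statement appears to require new arithmetic input, and, in light of Theorem \ref{theorem 17} and its expected $p$-adic analogue, is not morally easier than the real case.
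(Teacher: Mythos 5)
This statement is labelled as a \emph{Conjecture} in the paper (it is Haynes' $p$-adic Duffin--Schaeffer conjecture, \cite[Conjecture 1]{Haynes p-adic}), and the paper offers no proof of it; its actual contribution here is Theorem \ref{theorem 712}, which shows the conjecture is \emph{equivalent} to the union-measure lower bound of Conjecture \ref{conj711}, together with the conditional result in Section \ref{padic} that the quasi-independence on average method always suffices \emph{if} the conjecture is true. Your proposal correctly diagnoses this: you prove the easy (convergence) direction, outline the standard attack on the divergence direction, and honestly identify the uniform overlap bound as the open obstruction rather than claiming to close it. That assessment is accurate, and your outline matches the machinery the paper actually deploys for its partial results -- the Cassels--Gallagher zero-one law (Haynes' H2), the reduction to $p\nmid n$ and to radii that are powers of $p$ (H1, H3), the transfer of overlap estimates to the real case (H4), and Lemma \ref{lemma 16}. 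Two small caveats on your measure computation: the relation $\mu_p({\mathcal K}_n(\psi))\asymp\psi(n)\varphi(n)/n$ requires $p\nmid n$ (when $p\mid n$ the set is empty or all of $\mathbb{Z}_p$ by H3), and the overlap ${\mathcal K}_m\cap{\mathcal K}_n$ is controlled in the paper not by a fresh $p$-adic Pollington--Vaughan computation but by sandwiching it between real overlaps $\lambda({\mathcal E}_m(\psi_2/2)\cap{\mathcal E}_n(\psi_2/2))$ and $\lambda({\mathcal E}_m(2\psi_2)\cap{\mathcal E}_n(2\psi_2))$ via H4, so the arithmetic difficulty is literally the classical one, not merely analogous to it. Since no proof of the conjecture exists in the paper (or, at the time of writing, anywhere), there is nothing further to compare; your proposal is a correct situational analysis rather than a proof, and it should be presented as such.
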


 Similar to
Conjecture \ref{Conjecture 16} we propose

\begin{Conjecture}\label{conj711} There exists a universal constant $C>0$ depending only on $p$  such that
for any non-negative function
$\psi:\mathbb{N}\rightarrow\mathbb{R}$,
\begin{equation}
\mu_p(\bigcup_{n=1}^{\infty}{\mathcal K}_n(\psi))\geq
C\min\{\sum_{n=1}^{\infty}\mu_p({\mathcal K}_n(\psi)),1\}.
\end{equation}
\end{Conjecture}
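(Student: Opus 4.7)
The plan is to attack Conjecture \ref{conj711} directly via the Chung--Erd\H{o}s inequality combined with a $p$-adic quasi-independence-on-average estimate, i.e.\ the natural analogue of the real-variable strategy alluded to in the introduction. For every $N\in\mathbb{N}$, Chung--Erd\H{o}s yields
\[
\mu_p\Bigl(\bigcup_{n=1}^{N}\mathcal{K}_n(\psi)\Bigr)\;\geq\;\frac{\bigl(\sum_{n=1}^{N}\mu_p(\mathcal{K}_n(\psi))\bigr)^{2}}{\sum_{m,n=1}^{N}\mu_p(\mathcal{K}_m(\psi)\cap\mathcal{K}_n(\psi))},
\]
so to prove the conjecture it suffices to exhibit a constant $C_p>0$, depending only on $p$, with
\[
\sum_{m,n=1}^{N}\mu_p(\mathcal{K}_m\cap\mathcal{K}_n)\;\leq\;C_p\Bigl(\sum_{n=1}^{N}\mu_p(\mathcal{K}_n)\Bigr)^{2}+C_p\sum_{n=1}^{N}\mu_p(\mathcal{K}_n)
\]
uniformly in $N$ and $\psi$. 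Letting $N\to\infty$ and separating the cases $\sum_n\mu_p(\mathcal{K}_n)\geq 1$ and $<1$ then delivers the claimed bound with $C=(2C_p)^{-1}$.

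The heart of the argument is a $p$-adic analogue of the Pollington--Vaughan overlap estimate. Because $\mathbb{Q}_p$ is ultrametric, any two of the component balls inside $\mathcal{K}_m$ and $\mathcal{K}_n$ are either disjoint or one contains the other, and the intersection of a meeting pair has measure exactly $\min\{\psi(m)/m,\psi(n)/n\}$ rounded down to the nearest power of $p$. Counting pairs $(a,b)$ with $(a,m)=(b,n)=1$ whose centres satisfy $|a/m-b/n|_p\leq\max\{\psi(m)/m,\psi(n)/n\}$ reduces, through the identity $|a/m-b/n|_p=|an-bm|_p\cdot|mn|_p^{-1}$, to counting integer solutions of $an\equiv bm$ modulo a power of $p$ times $\gcd(m,n)$. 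A standard divisor-sum manipulation should then deliver
\[
\mu_p(\mathcal{K}_m\cap\mathcal{K}_n)\;\ll_p\;\frac{\psi(m)\varphi(m)}{m}\cdot\frac{\psi(n)\varphi(n)}{n}\cdot\frac{\gcd(m,n)}{\varphi(\gcd(m,n))},
\]
with the cases $p\mid m$ or $p\mid n$ contributing only a geometric factor in $p$ that can be absorbed into $C_p$.

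Summing this bound over $m,n\leq N$ and reorganising by $d=\gcd(m,n)$ splits the double sum into a diagonal piece, responsible for the additive $\sum_n\mu_p(\mathcal{K}_n)$ term on the right, and an off-diagonal piece that factorises as $\bigl(\sum_n\mu_p(\mathcal{K}_n)\bigr)^{2}$ multiplied by an Euler-type arithmetic factor of the shape $\sum_d \tfrac{1}{d\varphi(d)}(\cdots)$, which must be bounded \emph{uniformly} in the support of $\psi$. This uniform bound is the main obstacle: the factor misbehaves precisely on sequences $\psi$ concentrated on integers with many small prime factors, and over $\mathbb{R}$ this same arithmetic obstruction resisted direct attack for decades until the GCD-graph / anatomy-of-integers technology of Koukoulopoulos--Maynard broke through. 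Transplanting that machinery to $\mathbb{Q}_p$, where the Archimedean complications disappear but the small-prime obstructions coming from $q\neq p$ persist unchanged, is where the decisive quantitative work will lie.
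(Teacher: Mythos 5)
The statement you are trying to prove is a \emph{conjecture}, not a theorem: the paper offers no proof of Conjecture \ref{conj711}. What the paper actually proves (Theorem \ref{theorem 712}) is that Conjecture \ref{conj711} is \emph{equivalent} to the $p$-adic Duffin--Schaeffer conjecture (Conjecture \ref{conj710}), via a zero-one law, a compactness-type argument on the quantities $A_N$, and a dilation lemma ($\mu_p(\bigcup_n\mathcal{K}_n(t\psi))\leq t\,\mu_p(\bigcup_n\mathcal{K}_n(\psi))$ for $t\geq 1$). So any complete proof of the statement would in particular settle that open conjecture.

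Your proposal does not close this gap, and you essentially say so yourself. The Chung--Erd\H{o}s reduction is fine, but it transfers the entire difficulty onto the uniform bound
\[
\sum_{m,n\leq N}\mu_p(\mathcal{K}_m\cap\mathcal{K}_n)\;\ll_p\;\Bigl(\sum_{n\leq N}\mu_p(\mathcal{K}_n)\Bigr)^{2}+\sum_{n\leq N}\mu_p(\mathcal{K}_n),
\]
valid for \emph{all} $\psi$ and all $N$. This is exactly the quasi-independence-on-average property, which is not known unconditionally; the paper itself only establishes (conditionally) that quasi-independence on average is always \emph{available} if the conjecture is true, not the other way round. Two further concrete problems: (i) your proposed overlap estimate with the factor $\gcd(m,n)/\varphi(\gcd(m,n))$ is not of the correct shape --- the genuine obstruction (Strauch, Pollington--Vaughan, and Haynes's $p$-adic transfer in H4) is a product over primes dividing $mn/(m,n)^2$ that exceed $D(m,n)=\max\{n\psi(m),m\psi(n)\}/(m,n)$, and it is precisely this term that cannot be bounded uniformly by elementary divisor-sum manipulations; (ii) the ``standard divisor-sum manipulation'' you invoke to bound the resulting Euler-type factor uniformly in the support of $\psi$ is the open content of the conjecture, as you acknowledge by appealing to Koukoulopoulos--Maynard-type machinery that you do not supply. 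As written, the proposal is a plan of attack on an open problem, not a proof. If your goal was to reproduce what the paper establishes about this statement, the correct target is the equivalence with Conjecture \ref{conj710}, proved by the $A_N$/scaling argument of Section \ref{padic}.
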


Similar to Theorem \ref{theorem 17} we can show
\begin{theorem}\label{theorem 712} Conjecture \ref{conj710} and Conjecture \ref{conj711}
are equivalent.
\end{theorem}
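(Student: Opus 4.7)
The plan is to mirror the proof of Theorem \ref{theorem 17}, with the $p$-adic Gallagher-type zero-one law established by Haynes \cite{Haynes p-adic} taking the place of Gallagher's classical theorem.

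For the direction Conjecture \ref{conj711} $\Rightarrow$ Conjecture \ref{conj710}, the convergence half is the first Borel-Cantelli lemma, so only divergence needs attention. Given $\psi$ with $\sum_n \mu_p(\mathcal{K}_n(\psi)) = \infty$, I would apply Conjecture \ref{conj711} to the tail truncations $\psi^{(m)}(n) := \psi(n)\mathbf{1}_{\{n \geq m\}}$, whose sums still diverge, to obtain
\[
\mu_p\Big(\bigcup_{n \geq m} \mathcal{K}_n(\psi)\Big) \geq C \qquad\text{for every } m.
\]
Passing to the decreasing intersection yields $\mu_p(W_p(\psi)) \geq C > 0$, and Haynes' $p$-adic zero-one law upgrades this to $\mu_p(W_p(\psi)) = 1$.

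For the reverse direction Conjecture \ref{conj710} $\Rightarrow$ Conjecture \ref{conj711}, I would argue by contradiction. Given a failure of Conjecture \ref{conj711}, the goal is to build a single $\psi$ with $\sum_n \mu_p(\mathcal{K}_n(\psi)) = \infty$ but $\mu_p(W_p(\psi)) = 0$, contradicting Conjecture \ref{conj710}. Proceed inductively: at stage $k$, having chosen $\psi_j$ and primes $q_j$ for $j < k$, let $q_k$ be any prime $\neq p$ strictly larger than every index in $\bigcup_{j < k} q_j F_j$, and then select a counterexample $\psi_k$ (available from the failure of Conjecture \ref{conj711}) with finite support $F_k \subseteq \mathbb{N} \setminus p\mathbb{N}$, $\psi_k(n) < 1$ on $F_k$, $\sum_n \mu_p(\mathcal{K}_n(\psi_k)) \asymp 1$, and $\mu_p(Z_p(\psi_k)) \leq 2^{-k}/q_k$. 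Transplant via $\tilde\psi_k(q_k n) := \psi_k(n)$ for $n \in F_k$ so that $\tilde\psi_k$ is supported on the set $q_k F_k$ (disjoint from earlier supports by construction), and set $\psi := \sum_k \tilde\psi_k$. A computation based on the measure-preservation properties described below gives
\[
\sum_n \mu_p(\mathcal{K}_n(\tilde\psi_k)) \asymp \sum_n \mu_p(\mathcal{K}_n(\psi_k)) \asymp 1 \quad\text{and}\quad \mu_p(Z_p(\tilde\psi_k)) \leq q_k \mu_p(Z_p(\psi_k)) \leq 2^{-k}.
\]
Since the supports are disjoint, $W_p(\psi) = \limsup_k Z_p(\tilde\psi_k)$, and the first Borel-Cantelli lemma applied to the summable sequence $(2^{-k})$ yields $\mu_p(W_p(\psi)) = 0$, while $\sum_n \mu_p(\mathcal{K}_n(\psi)) = \sum_k \Theta(1) = \infty$, giving the desired contradiction.

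The main obstacle is the verification of the two transplantation estimates displayed above. Both rest on the observation that multiplication by the unit $q_k \in \mathbb{Z}_p^\times$ is a measure-preserving automorphism of $(\mathbb{Z}_p, \mu_p)$. Under this map, each center $a/(q_k n)$ of $\mathcal{K}_{q_k n}(\tilde\psi_k)$ becomes $a/n = b/n + j$ (writing $a = b + jn$ with $0 \leq j < q_k$, $1 \leq b \leq n$, and $(b, n) = 1$), the ball radius stays at $\psi_k(n)/(q_k n)$, and the coprimality condition $(a, q_k) = 1$ retains exactly $\varphi(q_k) = q_k - 1$ of the $q_k$ possible $j$'s. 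The assumption $\psi_k(n) < 1$ forces the resulting $(q_k - 1)\varphi(n)$ balls to be pairwise disjoint; since each such ball has $p$-adic measure a factor $\asymp 1/q_k$ smaller than a ball of radius $\psi_k(n)/n$, the two factors of $q_k$ cancel to yield $\mu_p(\mathcal{K}_n(\tilde\psi_k)) \asymp \mu_p(\mathcal{K}_n(\psi_k))$. The inequality $\mu_p(Z_p(\tilde\psi_k)) \leq q_k \mu_p(Z_p(\psi_k))$ then follows from the inclusion $q_k Z_p(\tilde\psi_k) \subseteq \bigcup_{j=0}^{q_k-1} (Z_p(\psi_k) + j)$ together with the union bound.
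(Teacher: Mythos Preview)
Your forward direction (Conjecture~\ref{conj711} $\Rightarrow$ Conjecture~\ref{conj710}) is correct and agrees with the paper's argument.

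The reverse direction has a real gap. You assert that from the failure of Conjecture~\ref{conj711} one can always extract counterexamples $\psi_k$ with $\sum_n\mu_p(\mathcal{K}_n(\psi_k))\asymp 1$ and $\mu_p(Z_p(\psi_k))$ arbitrarily small. But the negation of Conjecture~\ref{conj711} only says that for every $C>0$ there is some $\psi$ with $\mu_p(Z_p(\psi))<C\min\{\sum_n\mu_p(\mathcal{K}_n(\psi)),1\}$; it does not force the sum to be $\geq 1$. A priori it could happen that every $\psi$ with $\sum_n\mu_p(\mathcal{K}_n(\psi))\geq 1$ satisfies $\mu_p(Z_p(\psi))\geq C_0$ for a fixed $C_0>0$, while the failing $\psi$'s all have arbitrarily small sum. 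In that scenario your construction never starts, yet by your own forward argument (which only uses the $\sum\geq 1$ regime) Conjecture~\ref{conj710} would hold while Conjecture~\ref{conj711} fails --- precisely what the theorem must exclude. Your transplantation device (multiplying the support by a prime $q_k$) does not help here, since by your own computation it leaves $\sum_n\mu_p(\mathcal{K}_n)$ essentially unchanged.

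The paper closes this gap with a scaling lemma (Lemma~\ref{lemma 71padic}): for $t\geq 1$,
\[
\mu_p\Big(\bigcup_n\mathcal{K}_n(t\psi)\Big)\leq t\,\mu_p\Big(\bigcup_n\mathcal{K}_n(\psi)\Big),
\]
proved from the ultrametric ball structure of $\mathbb{Z}_p$. This lets one pass from a uniform lower bound in the regime $\sum\geq 1$ to the full Conjecture~\ref{conj711}: given $\psi$ with small sum, scale up until the sum reaches $1$, apply the bound there, then divide by $t$. Once the scaling lemma is in hand, the paper's gluing is also simpler than yours: rather than transplanting by primes, one shows (the analogue of Claims~1 and~2 in Section~\ref{dsc1}, packaged as Theorem~\ref{theorem 71}) that if counterexamples with $\sum\geq 1$ exist at all then they exist with support in $[N,\infty)$ for every $N$, so one can glue on disjoint intervals directly without the prime-shifting bookkeeping.
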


The proof of Theorem \ref{theorem 712} is in essence similar to that
of Theorem \ref{theorem 17}, but still needs to be treated
independently.

We will also show that one can always use the quasi-independence on
average method to deduce $\mu_p(W_p(\psi))=1$ as long as the
$p$-adic version of the Duffin-Schaeffer conjecture is true.

Several recent progresses on the classical Duffin-Schaeffer
conjecture (\cite{Aistleitner,BHHV,Li}) can be naturally transferred
into new results on the $p$-adic version of the Duffin-Schaeffer
conjecture via a lemma of Haynes (\cite[Lemma 3]{Haynes p-adic}),
but we will not pursue this direction in the paper.

\subsection{Formal Laurent series}

Let $\mathbb{F}$ be a finite field of $q$ elements. Throughout we
will use the following notations which play the roles of integers,
rational numbers, real numbers, $\mathbb{R}/\mathbb{Z}$, and the
absolute value, respectively:

\begin{itemize}
\item $\mathbb{F}[X]$ denotes the set of polynomials with
 $\mathbb{F}$-coefficients,

\item $\mathbb{F}(X)$ denotes the fraction field of $\mathbb{F}[X]$,

\item $\mathbb{F}((X^{-1}))$ denotes the set of formal Laurent
series,

\item$\mathbb{L}$ denotes the set of elements of $\mathbb{F}((X^{-1}))$ with
degrees less than zero,

\item $|f|\triangleq q^{\partial f}$, where $\partial f$ denotes
the degree of $f\in\mathbb{F}((X^{-1}))$.
\end{itemize}
The metric $\rho$ on $\mathbb{F}((X^{-1}))$ is naturally defined as
$\rho(f,g)=|f-g|$. Since $\mathbb{F}((X^{-1}))$ is a locally compact
topological group under addition, there exists a unique Haar measure
$\nu$ on $\mathbb{F}((X^{-1}))$ such that $\nu(\mathbb{L})=1$.  The
$d$-fold ($d\in\mathbb{N}$) product of measure $\nu$ on
$\mathbb{F}((X^{-1}))^d$ is denoted by $\nu_d$.

For any non-negative function
$\Psi:\mathbb{F}[X]\rightarrow\mathbb{R}$ and any monic
$Q\in\mathbb{F}[X]$,  we define
\begin{equation}
{\mathcal E}_Q(\Psi)=\bigcup_{{P\in\mathbb{F}[X]\atop \partial
P<\partial Q}\atop
(P,Q)=1}\big\{f\in\mathbb{L}:|f-\frac{P}{Q}|<\frac{\Psi(Q)}{|Q|}\big\},
\end{equation} and
put for any $d\in\mathbb{N}$,
\begin{equation}
W^{(d)}(\Psi)=\bigcap_{n=1}^{\infty}\bigcup_{\partial Q\geq
n}{\mathcal E}_Q(\Psi)^d.
\end{equation}
A few years ago Inoue and Nakada (\cite{Inoue,InoueNakada}) first
studied the metric simultaneous Diophantine approximation in
$\mathbb{F}((X^{-1}))^d$ by attacking the following Duffin-Schaeffer
type conjecture (\cite[Conjecture]{Inoue}):

\begin{Conjecture}\label{conj76}
$\nu_d(W^{(d)}(\Psi))=1$ if and only if
\begin{equation}\sum_{Q\ \mbox{is monic}}\nu_d({\mathcal E}_Q(\Psi)^{d})=\infty.\end{equation}
\end{Conjecture}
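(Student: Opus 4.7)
The plan is to follow the template from Theorems \ref{theorem 17} and \ref{theorem 712}, reducing the divergence half of the conjecture to a companion Haar-measure inequality that is cleaner to attack, and reading off the convergence half from Borel--Cantelli.

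First I would dispense with the convergence direction: if $\sum_{Q}\nu_d({\mathcal E}_Q(\Psi)^d)$ is finite then the first Borel--Cantelli lemma applied to the sequence $\{{\mathcal E}_Q(\Psi)^d\}_{Q\text{ monic}}$ (ordered by increasing $\partial Q$) immediately gives $\nu_d(W^{(d)}(\Psi))=0$. Since the centres $P/Q$ in reduced form are separated by at least $1/|Q|$ in $\mathbb{L}$, one obtains
\[
\nu({\mathcal E}_Q(\Psi))\asymp \min\Bigl\{\frac{\Psi(Q)\cdot\#\{P\in\mathbb{F}[X]:\partial P<\partial Q,(P,Q)=1\}}{|Q|},\,1\Bigr\},
\]
so the summability hypothesis is the natural function-field analogue of (\ref{formula 11}).

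For the divergence direction, which is the substantive content, I would introduce the natural analogue of Conjectures \ref{Conjecture 16} and \ref{conj711}: set $Z^{(d)}(\Psi)=\bigcup_{Q\text{ monic}}{\mathcal E}_Q(\Psi)^d$ and conjecture the existence of a constant $C=C(q,d)>0$ with
\[
\nu_d(Z^{(d)}(\Psi))\,\geq\, C\min\Bigl\{\sum_{Q\text{ monic}}\nu_d({\mathcal E}_Q(\Psi)^d),\,1\Bigr\},
\]
and then mimic the equivalence proof of Theorem \ref{theorem 712}. All the structural ingredients there — truncation of $\Psi$ to polynomials of bounded degree, a shift-of-$\Psi$ construction to pass from $Z$ to $W$, and a zero-one argument exploiting translation invariance of $\nu_d$ — should transfer verbatim, since $\mathbb{F}((X^{-1}))$ is ultrametric and locally compact, with $\nu_d$ playing the role of $\mu_p$, monic $Q$ the role of integer denominators, and unique factorization in $\mathbb{F}[X]$ the analogue of coprimality.

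The main obstacle is then to establish the $Z$-inequality itself. For this I would invoke quasi-independence on average, bounding
\[
\sum_{\partial Q,\partial Q'\leq N}\nu_d({\mathcal E}_Q(\Psi)^d\cap {\mathcal E}_{Q'}(\Psi)^d)\,\ll\,\Bigl(\sum_{\partial Q\leq N}\nu_d({\mathcal E}_Q(\Psi)^d)\Bigr)^2
\]
via a Pollington--Vaughan-style overlap estimate; the function-field arithmetic already supplies the exact separation $|P/Q-P'/Q'|\geq 1/|QQ'|$ between distinct reduced fractions, which is slightly cleaner than over $\mathbb{R}$. The hardest part — indeed the one which leaves the classical conjecture open — is controlling pairs $(Q,Q')$ sharing many common irreducible factors, where the overlap can be anomalously large. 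For $d\geq 2$ I would expect a Gallagher-style argument to close this gap unconditionally, but for $d=1$ I do not expect to bypass the obstruction that is present in the classical setting.
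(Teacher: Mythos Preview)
The statement you are attempting to prove is \emph{Conjecture}~\ref{conj76}, not a theorem: the paper does not prove it, and it remains open. What the paper does establish is Theorem~\ref{theorem 78}, the equivalence between Conjecture~\ref{conj76} and the $Z$-inequality you formulate (this is precisely Conjecture~\ref{conj77}). Your reduction step---passing from $W^{(d)}(\Psi)$ to $Z^{(d)}(\Psi)$ via truncation, the zero-one law, and a scaling lemma---is exactly the content of that theorem, and your outline of it is correct (the paper carries it out through Theorem~\ref{theorem 71} and Lemma~\ref{lemma 79}).

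The genuine gap is your final paragraph, where you propose to \emph{prove} the $Z$-inequality by quasi-independence. For $d=1$ you already concede this is blocked by the same obstruction as the classical conjecture. But your expectation that for $d\geq 2$ ``a Gallagher-style argument'' closes the gap is too optimistic: the paper explicitly remarks that the Sprind\u{z}uk-type conjecture over formal Laurent series has not been established, so Conjecture~\ref{conj76} is open even in higher dimensions. The Gallagher-type result the paper does prove, Theorem~\ref{theorem 114new}, concerns the \emph{different} set ${\mathcal H}^{(d)}(\Psi)$, where the coprimality condition is $(P_1,\ldots,P_d,Q)=1$ jointly rather than $(P_i,Q)=1$ for each $i$; this weaker condition is what makes the overlap estimate in Lemma~\ref{lemma 76} go through. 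For ${\mathcal E}_Q(\Psi)^d$ the stronger per-coordinate coprimality reintroduces the difficulty with shared irreducible factors that you identify, and no Pollington--Vaughan argument for it in positive characteristic is supplied in the paper or the literature it cites.
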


 Similar to
Conjectures \ref{Conjecture 16} \& \ref{conj711}  we propose

\begin{Conjecture}\label{conj77}
There exists a universal constant $C>0$ depending only on $d$
and the size of $\mathbb{F}$ such that for any non-negative function
$\Psi:\mathbb{F}[X]\rightarrow\mathbb{R}$,
\begin{equation}
\nu_d\big(\bigcup_{Q\ \mbox{is monic}}{\mathcal
E}_Q(\Psi)^d\big)\geq C\min\{\sum_{Q\ \mbox{is
monic}}\big(\frac{\Psi(Q)\Phi(Q)}{|Q|}\big)^d,1\}.
\end{equation}
\end{Conjecture}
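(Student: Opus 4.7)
The plan is to establish the lower bound directly by a Chung-Erdős second-moment inequality combined with a Pollington-Vaughan style pairwise overlap estimate tailored to the non-Archimedean Laurent series setting, mirroring the general template that underlies successful attacks on the classical Duffin-Schaeffer conjecture. First, using the ultrametric property of $|\cdot|$ on $\mathbb{F}((X^{-1}))$, I would verify that for each monic $Q$ the discs $\{f\in\mathbb{L}:|f-P/Q|<\Psi(Q)/|Q|\}$, indexed by $P$ with $\partial P<\partial Q$ and $(P,Q)=1$, are pairwise disjoint whenever $\Psi(Q)<1$, giving the exact identity
\[
\nu_d(\mathcal{E}_Q(\Psi)^d)=\min\Big\{\Big(\frac{\Psi(Q)\Phi(Q)}{|Q|}\Big)^d,1\Big\}.
\]
A standard truncation then reduces Conjecture \ref{conj77} to proving the inequality for finite families of monic polynomials in the regime where the right-hand sum is at most $1$.

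Next, I would invoke the Chung-Erdős lower bound
\[
\nu_d\Big(\bigcup_Q\mathcal{E}_Q(\Psi)^d\Big)\;\geq\;\frac{\big(\sum_Q\nu_d(\mathcal{E}_Q(\Psi)^d)\big)^2}{\sum_{Q_1,Q_2}\nu_d(\mathcal{E}_{Q_1}(\Psi)^d\cap\mathcal{E}_{Q_2}(\Psi)^d)},
\]
thereby reducing the problem to quasi-independence on average. For a single coordinate ($d=1$), the estimate $|P_1/Q_1-P_2/Q_2|\geq 1/|Q_1Q_2|$ whenever $P_1Q_2\neq P_2Q_1$, together with the non-Archimedean fact that overlapping discs are nested with intersection equal to the smaller disc, yields a clean pairwise overlap estimate of Pollington-Vaughan shape. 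Multiplicativity of $\nu_d$ across the $d$ coordinates then lifts this to
\[
\nu_d(\mathcal{E}_{Q_1}(\Psi)^d\cap\mathcal{E}_{Q_2}(\Psi)^d)=\nu(\mathcal{E}_{Q_1}(\Psi)\cap\mathcal{E}_{Q_2}(\Psi))^d,
\]
so the $d$-fold problem inherits all structural features of the scalar case.

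The main obstacle is bounding the double sum of pairwise intersections by
\[
\sum_{Q_1,Q_2}\nu_d(\mathcal{E}_{Q_1}(\Psi)^d\cap\mathcal{E}_{Q_2}(\Psi)^d)\;\leq\;C\Big(\sum_Q\nu_d(\mathcal{E}_Q(\Psi)^d)\Big)^2,
\]
which is the direct function-field analogue of the decisive step in the Koukoulopoulos-Maynard resolution of the classical Duffin-Schaeffer conjecture. The plan is to adapt their GCD-graph technology to the Euclidean ring $\mathbb{F}[X]$, where explicit counts of monic irreducibles of a given degree and clean Mertens-type sums make several of the underlying divisor estimates sharper than in $\mathbb{Z}$, and where the non-Archimedean geometry already supplies the overlap bounds at the geometric step. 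The constant $C$ emerging from this analysis is expected to depend only on $d$ and $|\mathbb{F}|=q$, matching the statement of Conjecture \ref{conj77}; isolating this dependence, rather than producing a $\Psi$-uniform bound, is where the argument must be watched most carefully.
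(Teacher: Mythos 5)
The statement you are trying to prove is presented in the paper as a \emph{conjecture}: the paper never proves Conjecture \ref{conj77}, it only proves (Theorem \ref{theorem 78}) that it is \emph{equivalent} to the function-field Duffin--Schaeffer conjecture (Conjecture \ref{conj76}), via the scaling estimate of Lemma \ref{lemma 79}, the zero--one law of Inoue, and the general principle of Theorem \ref{theorem 71}. Your proposal instead tries to prove the conjecture outright, and the decisive step --- the bound
\[
\sum_{Q_1\neq Q_2}\nu_d\big(\mathcal{E}_{Q_1}(\Psi)^d\cap\mathcal{E}_{Q_2}(\Psi)^d\big)\;\ll\;\Big(\sum_{Q}\nu_d\big(\mathcal{E}_{Q}(\Psi)^d\big)\Big)^2
\]
uniformly in $\Psi$ --- is precisely the open problem. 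You do not derive it; you announce that you would ``adapt the Koukoulopoulos--Maynard GCD-graph technology to $\mathbb{F}[X]$.'' That adaptation is the entire content of the conjecture, and nothing in the proposal carries it out. In particular, the scalar overlap estimate you describe ($|P_1/Q_1-P_2/Q_2|\geq 1/|Q_1Q_2|$ plus nestedness of discs) only tells you \emph{when} two discs can meet; the difficulty is \emph{counting} the pairs $(P_1,P_2)$ for which they do meet, and that count carries a Pollington--Vaughan type factor (the analogue of $P(m,n)$ in formula (\ref{formula 2222}) of the paper) governed by the small prime divisors of $Q_1Q_2/(Q_1,Q_2)^2$, which is not uniformly bounded. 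This is exactly why the paper records that progress on Conjecture \ref{conj76} is incomplete and that even the Sprind\v{z}uk-type statement over $\mathbb{F}((X^{-1}))$ is unproved.

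Two smaller inaccuracies are worth flagging. First, your ``exact identity'' $\nu_d(\mathcal{E}_Q(\Psi)^d)=\min\{(\Psi(Q)\Phi(Q)/|Q|)^d,1\}$ is not exact: the Haar measure of a ball $B(f,r)$ in $\mathbb{F}((X^{-1}))$ satisfies only $r\leq\nu(B(f,r))\leq qr$ (the paper's property $\spadesuit\spadesuit$), since the radius need not be a power of $q$; one only gets comparability up to $q^d$, which is harmless but should not be called an identity. Second, even granting the full quasi-independence bound, the Chung--Erd\H{o}s route would give the limsup statement (Conjecture \ref{conj76}) and a lower bound for the union; to package that as the clean universal-constant inequality of Conjecture \ref{conj77} for \emph{all} $\Psi$ (including those with $\sum(\Psi\Phi/|Q|)^d$ small) you would still need the rescaling argument of Lemma \ref{lemma 79} or something equivalent, which your proposal omits. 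As it stands, the proposal is a research plan whose central step is the unresolved conjecture itself, not a proof.
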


Similar to Theorems \ref{theorem 17} \& \ref{theorem 712} we can
show
\begin{theorem}\label{theorem 78}
Conjecture \ref{conj76} and Conjecture \ref{conj77} are equivalent.
\end{theorem}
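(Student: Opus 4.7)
The plan is to follow the template used for Theorems \ref{theorem 17} and \ref{theorem 712}, now in the non-Archimedean setting of $\mathbb{F}((X^{-1}))^d$, using throughout the formal Laurent series analogue of (\ref{formula 141414}), namely $\nu_d({\mathcal E}_Q(\Psi)^d)\asymp\min\{(\Psi(Q)\Phi(Q)/|Q|)^d,1\}$. For the direction Conjecture \ref{conj77} $\Rightarrow$ Conjecture \ref{conj76}, the convergent half of Conjecture \ref{conj76} is immediate from the first Borel--Cantelli lemma. For the divergent half, fix $m\geq 1$ and set $\Psi_m(Q)=\Psi(Q)\,\mathbf{1}_{\partial Q\geq m}$; the tail sum $\sum_Q(\Psi_m(Q)\Phi(Q)/|Q|)^d$ is still infinite, so Conjecture \ref{conj77} gives $\nu_d(\bigcup_{\partial Q\geq m}{\mathcal E}_Q(\Psi)^d)\geq C$. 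Monotonically intersecting over $m\to\infty$ yields $\nu_d(W^{(d)}(\Psi))\geq C$, and a zero--one law for $W^{(d)}(\Psi)$ (the formal Laurent series analogue of the Cassels--Gallagher theorem) upgrades this to $\nu_d(W^{(d)}(\Psi))=1$.

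For the direction Conjecture \ref{conj76} $\Rightarrow$ Conjecture \ref{conj77}: write $S=\sum_Q(\Psi(Q)\Phi(Q)/|Q|)^d$. If $S=\infty$, Conjecture \ref{conj76} gives $\nu_d(W^{(d)}(\Psi))=1$ and hence $\nu_d(\bigcup_Q{\mathcal E}_Q(\Psi)^d)=1$. For finite $S$ I would first prove an intermediate statement: there is an absolute $C_0>0$ such that $S\geq1$ implies $\nu_d(\bigcup_Q{\mathcal E}_Q(\Psi)^d)\geq C_0$. Were it to fail, for each $k$ one could choose $\Psi_k$ with $S(\Psi_k)\in[1,2]$ and $\nu_d(\bigcup_Q{\mathcal E}_Q(\Psi_k)^d)<1/k^2$; after truncating $\Psi_k$ to a finite support and relocating that support to an arbitrarily high degree range via $Q\mapsto QR_k$ (for a monic $R_k$ of large degree coprime to the support) with the compensating value $\widetilde\Psi_k(QR_k)=\Psi_k(Q)|R_k|/\Phi(R_k)$, the superposition $\Psi^{*}=\sum_k\widetilde\Psi_k$ on mutually disjoint degree ranges would have $S(\Psi^{*})=\infty$ but $\nu_d(\bigcup_Q{\mathcal E}_Q(\Psi^{*})^d)\leq\sum_k 1/k^2<1$, contradicting Conjecture \ref{conj76}. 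For $S<1$, replicate $\Psi$ onto $k=\lceil 1/S\rceil$ disjoint-support copies using the same shifting to build $\Psi^{**}$ with $S(\Psi^{**})\geq1$ and $\nu_d(\bigcup_Q{\mathcal E}_Q(\Psi^{**})^d)\lesssim k\,\nu_d(\bigcup_Q{\mathcal E}_Q(\Psi)^d)$; the intermediate statement then delivers $\nu_d(\bigcup_Q{\mathcal E}_Q(\Psi)^d)\gtrsim S$.

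The main obstacle is quantitative control of the shift $Q\mapsto QR_k$. Preservation of the individual measure $\nu_d({\mathcal E}_{QR_k}(\widetilde\Psi_k)^d)=\nu_d({\mathcal E}_Q(\Psi_k)^d)$ is a direct computation from multiplicativity of $\Phi$ on coprime arguments together with the chosen rescaling, and the Pollington--Vaughan-type disjointness condition $\Psi(Q)\leq|Q|/(2\Phi(Q))$ passes to $QR_k$ unchanged. The delicate point is bounding the union measure after shifting: one must exploit the non-Archimedean structure of $\mathbb{L}^d$ (balls are either disjoint or nested) together with the Chinese Remainder decomposition $P/(QR_k)=PA/Q+PB/R_k$ (where $AR_k+BQ=1$) to argue that, once $\partial R_k$ is large enough, the shifted balls group into bounded-multiplicity refinements of the original ones, so that the shifted union measure exceeds the original by at most an absolute constant factor. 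This is the formal Laurent series analogue of the corresponding technical step underlying Theorem \ref{theorem 17}.
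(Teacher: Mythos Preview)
The direction Conjecture~\ref{conj77} $\Rightarrow$ Conjecture~\ref{conj76} is correct and matches the paper. The problem is in the reverse direction, specifically in your shifting device $Q\mapsto QR_k$.

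Your central technical claim is that after relocating a finitely supported $\Psi_k$ via $\widetilde\Psi_k(QR_k)=\Psi_k(Q)|R_k|/\Phi(R_k)$, the shifted union measure $\nu_d(\bigcup_Q\mathcal E_{QR_k}(\widetilde\Psi_k)^d)$ exceeds the original $\nu_d(\bigcup_Q\mathcal E_Q(\Psi_k)^d)$ by at most an absolute constant factor. This is not justified and in fact appears to go the wrong way. Via the Chinese remainder decomposition the shifted balls are centred at $a/Q+b/R_k$ with radius $\Psi_k(Q)/(\Phi(R_k)|Q|)$; each such ball lies in $B(a/Q,\Psi_k(Q)/|Q|)+b/R_k$, so the shifted union is contained in $\bigcup_b(U+b/R_k)$ with $U=\bigcup_Q\mathcal E_Q(\Psi_k)$. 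That only yields $\nu_d(\widetilde U)\le\Phi(R_k)^d\,\nu_d(U)$, with no absolute constant in sight, and $\Phi(R_k)$ must be large. Worse, shrinking the radii by $1/\Phi(R_k)$ and dispersing the centres genuinely destroys overlaps: two original balls that coincided may become disjoint after the shrink, so a small union measure need not survive the shift. The same obstruction breaks your replication argument for $S<1$, where you need the inequality in the same direction.

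The paper sidesteps shifting entirely. For the intermediate statement it works with the increasing sequence of infima $A_N=\inf\{\nu_d(\bigcup_Q\mathcal E_Q(\Psi)^d):S(\Psi)\ge1,\ \operatorname{supp}\Psi\subset\{\partial Q\ge N\}\}$ and shows, via the general principle of Theorem~\ref{theorem 71} together with the Gallagher-type zero--one law, that Conjecture~\ref{conj76} forces $A_\infty=\lim_NA_N>0$; bad functions supported at arbitrarily high degree then come directly from these infima, no relocation needed. For the case $S<1$ the paper uses the \emph{scaling} lemma (Lemma~\ref{lemma 79}): $\nu_d(\bigcup_Q\mathcal E_Q(t\Psi)^d)\le q^dt^d\,\nu_d(\bigcup_Q\mathcal E_Q(\Psi)^d)$ for $t\ge1$. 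Taking $t=S^{-1/d}$ gives $S(t\Psi)=1$ and hence $\nu_d(\bigcup_Q\mathcal E_Q(\Psi)^d)\ge q^{-d}A_1S$. The scaling lemma is trivial in the ultrametric setting because the union decomposes as a disjoint union of balls whose radii simply dilate; this exploits exactly the ``balls are nested or disjoint'' structure you invoked, but in a direction that actually closes.
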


The proof of Theorem \ref{theorem 78} is fully identical to that of
Theorem \ref{theorem 712}.

In contrast to the classical and $p$-adic cases (\cite{Haynes
p-adic,PV2}), progresses on Conjecture \ref{conj76} are rather
incomplete. For example,  the  Sprind\u{z}uk  type conjecture
(\cite{Haynes p-adic,PV2,Sprindzuk}) over formal Laurent series
hasn't been established yet. Maybe the best currently known result
is due to Inoue, Nakada (\cite[Thm. 1]{InoueNakada}) and Fuchs
(\cite[Thm. 1]{Fuchs}) whose theorems confirmed Conjecture
\ref{conj76} under the additional assumption that $\Psi(Q)$ depends
only on the degree of $Q$.

Without any extra assumptions on $\Psi$, we will also study a
variant of $W^{(d)}(\Psi)$ by establishing a Gallagher  type theorem
(\cite[Thm. 1]{GallagherHigher}). For any non-negative function
$\Psi:\mathbb{F}[X]\rightarrow\mathbb{R}$, any monic
$Q\in\mathbb{F}[X]$ and any $d\in\mathbb{N}$, we first define
\begin{equation} {\mathcal
H}_Q^{(d)}(\Psi)=\mathbb{L}^d\cap\Big(\bigcup_{{P_i\in\mathbb{F}[X]\atop
\partial P_i<\partial Q}\atop
(P_1,P_2,\cdots,P_d,Q)=1}\prod_{i=1}^dB(\frac{P_i}{Q},\frac{\Psi(Q)}{|Q|})
\Big)
\end{equation}
then set
\begin{equation}
{\mathcal H}^{(d)}(\Psi)=\bigcap_{n=1}^{\infty}\bigcup_{\partial
Q\geq n}{\mathcal H}_Q^{(d)}(\Psi).
\end{equation}
Note ${\mathcal H}^{(1)}(\Psi)=W^{(1)}(\Psi)$. In the
higher-dimensions we will show
\begin{theorem}\label{theorem 114new}
Let $d\geq2$. Then $\nu_d({\mathcal H}^{(d)}(\Psi))=1$ if and only
if
\begin{align}
\sum_{Q\ \mbox{is monic}}\nu_d({\mathcal
H}_Q^{(d)}(\Psi))=\infty.
\end{align}
\end{theorem}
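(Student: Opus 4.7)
The plan is to adapt Gallagher's classical argument for higher-dimensional simultaneous Diophantine approximation to the non-Archimedean setting of $\mathbb{F}((X^{-1}))^d$. The convergence half is immediate: if $\sum_{Q}\nu_d(\mathcal{H}_Q^{(d)}(\Psi))<\infty$, then the first Borel-Cantelli lemma yields $\nu_d(\mathcal{H}^{(d)}(\Psi))=0$. The content of the theorem is the divergence direction.

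The crucial feature that makes the condition $(P_1,\dots,P_d,Q)=1$ effectively negligible when $d\geq2$ is the following Jordan-totient identity obtained by M\"obius inversion in $\mathbb{F}[X]$:
\begin{equation*}
\#\{(P_1,\dots,P_d)\in(\mathbb{F}[X]/Q)^d:(P_1,\dots,P_d,Q)=1\}=|Q|^d\prod_{P\mid Q}(1-|P|^{-d}).
\end{equation*}
For $d\geq2$ the product on the right is bounded below by the positive constant $\prod_{P}(1-|P|^{-d})$, independently of $Q$ (the product running over all monic irreducibles). Combined with the ultrametric fact that the boxes $\prod_{i=1}^d B(P_i/Q,\Psi(Q)/|Q|)$ are pairwise disjoint as $(P_1,\dots,P_d)$ ranges over residues modulo $Q$ (after truncating $\Psi$ to $\Psi(Q)\leq 1$, which entails no loss of generality), this delivers two-sided bounds $\nu_d(\mathcal{H}_Q^{(d)}(\Psi))\asymp\min\{\Psi(Q)^d,1\}$ with constants depending only on $d$ and $q$.

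With this in hand the next step is to establish quasi-independence on average for the family $\{\mathcal{H}_Q^{(d)}(\Psi)\}$. I would estimate the pairwise intersection $\nu_d(\mathcal{H}_{Q_1}^{(d)}(\Psi)\cap\mathcal{H}_{Q_2}^{(d)}(\Psi))$ by observing that if two product boxes intersect, then the ultrametric forces exact congruences of the form $P_i^{(1)}Q_2\equiv P_i^{(2)}Q_1\pmod{R}$ for an explicit modulus $R$ determined by the radii, by $\partial Q_1,\partial Q_2$ and by $D=\gcd(Q_1,Q_2)$. Counting these coordinatewise and invoking the Jordan-totient lower bound in each coordinate, one arrives at
\begin{equation*}
\nu_d(\mathcal{H}_{Q_1}^{(d)}(\Psi)\cap\mathcal{H}_{Q_2}^{(d)}(\Psi))\ll\nu_d(\mathcal{H}_{Q_1}^{(d)}(\Psi))\,\nu_d(\mathcal{H}_{Q_2}^{(d)}(\Psi)).
\end{equation*}
The standard quasi-independence form of the divergence Borel-Cantelli lemma then produces $\nu_d(\mathcal{H}^{(d)}(\Psi))>0$, and a Cassels-Gallagher type zero-one law, exploiting that $\mathcal{H}^{(d)}(\Psi)$ is invariant modulo the natural action of $\mathbb{F}[X]^d$ on $\mathbb{L}^d$, upgrades positive measure to full measure.

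The main obstacle will be the pairwise intersection estimate, which requires careful bookkeeping of residue classes at two distinct denominators under the weak coprimality hypothesis $(P_1,\dots,P_d,Q)=1$. Fortunately the ultrametric simplifies matters considerably compared with Gallagher's original real-variable argument: balls in $\mathbb{F}((X^{-1}))$ are either equal or disjoint, so the intersection counting reduces to an exact congruence problem in $\mathbb{F}[X]/\operatorname{lcm}(Q_1,Q_2)$ rather than the approximate lattice-point counts in thin regions that arise in the classical setting, and the $d$-fold coprimality condition is absorbed uniformly by the Jordan-totient bound.
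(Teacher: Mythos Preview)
Your proposal is correct and follows essentially the same route as the paper: the Jordan-totient lower bound via M\"obius inversion (the paper's Lemma~\ref{lemma 75}), a pairwise overlap estimate exploiting the ultrametric to reduce intersection counting to exact congruences (the paper's Lemma~\ref{lemma 76}), quasi-independence via the second Borel--Cantelli lemma, and finally a Cassels--Gallagher type zero-one law. The only point where the paper is more explicit is the zero-one law itself (Lemma~\ref{lemma 72}), which it obtains not from a direct invariance argument but via the cross-fibering principle of Beresnevich--Haynes--Velani combined with a one-dimensional lemma; your phrase ``invariance modulo the natural action of $\mathbb{F}[X]^d$ on $\mathbb{L}^d$'' would need to be made precise, but the overall architecture matches.
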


\subsection{Quasi-independence on average method}

In this paper a weighted version of the second Borel-Cantelli lemma
will be introduced for the first time to the study of the
Duffin-Schaeffer conjecture. We begin with a beautiful result of
Gallagher (\cite{Gallagher}) called ``zero-one law" claiming that
$\lambda(W(\psi))$ can be either 0 or 1 for any non-negative
function $\psi$. This means if $\lambda(W(\psi))>0$, then we must
have $\lambda(W(\psi))=1$. A useful tool for proving
$\lambda(W(\psi))>0$ is the following second Borel-Cantelli lemma
due to Erd\"{o}s and R\'{e}nyi (\cite{ErdosRenyi}):

\begin{lemma}\label{lemma 16}
Let $\{{\mathcal A}_n\}_{n\in\mathbb{N}}$ be a sequence of events in
a probability space $(\Omega,\mathbb{P})$ such that
$\sum_n\mathbb{P}({\mathcal A}_n)=\infty$. Then
\begin{equation}\label{formula 13}
\mathbb{P}(\limsup_{n\rightarrow\infty}{\mathcal
A}_n)\geq\limsup_{n\rightarrow\infty}\frac{\displaystyle\big(\sum_{k=1}^n\mathbb{P}({\mathcal
A}_k)\big)^2}{\displaystyle\sum_{i=1}^n\sum_{j=1}^n\mathbb{P}({\mathcal
A}_i\cap{\mathcal A}_j)}.
\end{equation}
\end{lemma}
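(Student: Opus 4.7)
The plan is to deduce Lemma \ref{lemma 16} from a second-moment (Paley--Zygmund, or Chung--Erd\H{o}s) argument, and then to pass from a union bound to a $\limsup$ bound by a tail-truncation step that exploits the divergence hypothesis $\sum_n \mathbb{P}(\mathcal{A}_n) = \infty$.

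First I would prove the finite version. For integers $1 \leq m \leq N$, set $X = \sum_{k=m}^N 1_{\mathcal{A}_k}$. Since $X \geq 0$ and $\{X > 0\} = \bigcup_{k=m}^N \mathcal{A}_k$, writing $X = X \cdot 1_{\{X > 0\}}$ and applying Cauchy--Schwarz gives $(\mathbb{E}[X])^2 \leq \mathbb{E}[X^2] \cdot \mathbb{P}(X > 0)$, hence
\[
\mathbb{P}\Big(\bigcup_{k=m}^{N} \mathcal{A}_k\Big) \;\geq\; \frac{\bigl(\sum_{k=m}^{N} \mathbb{P}(\mathcal{A}_k)\bigr)^2}{\sum_{i,j=m}^{N} \mathbb{P}(\mathcal{A}_i \cap \mathcal{A}_j)}.
\]
Letting $N \to \infty$ and retaining a $\limsup$ on the right produces, for each fixed $m$,
\[
\mathbb{P}\Big(\bigcup_{k=m}^{\infty} \mathcal{A}_k\Big) \;\geq\; \limsup_{N \to \infty} \frac{\bigl(\sum_{k=m}^{N} \mathbb{P}(\mathcal{A}_k)\bigr)^2}{\sum_{i,j=m}^{N} \mathbb{P}(\mathcal{A}_i \cap \mathcal{A}_j)}.
\]

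Next I would verify that this $\limsup$ is independent of $m$ and coincides with the right-hand side of (\ref{formula 13}). Write $\sigma_N = \sum_{k=1}^N \mathbb{P}(\mathcal{A}_k)$ and $\tau_N = \sum_{i,j=1}^N \mathbb{P}(\mathcal{A}_i \cap \mathcal{A}_j)$, and note the automatic bound $\tau_N = \mathbb{E}[S_N^2] \geq (\mathbb{E}[S_N])^2 = \sigma_N^2$, where $S_N = \sum_{k=1}^N 1_{\mathcal{A}_k}$. Using $\mathbb{P}(\mathcal{A}_i \cap \mathcal{A}_j) \leq \min(\mathbb{P}(\mathcal{A}_i), \mathbb{P}(\mathcal{A}_j))$, the distortion from deleting the first $m-1$ indices is $O_m(\sigma_N)$ in both the numerator (expand $(\sigma_N - \sigma_{m-1})^2$) and the denominator. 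Because $\sigma_N \to \infty$ by hypothesis and $\tau_N \geq \sigma_N^2$, these corrections are of lower order than the leading terms $\sigma_N^2$ and $\tau_N$ respectively, so the ratio computed from $k \geq m$ is $(1 + o(1))$ times the ratio computed from $k \geq 1$ as $N \to \infty$, and the two $\limsup$s coincide.

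Finally, since the events $\bigcup_{k \geq m} \mathcal{A}_k$ decrease to $\limsup_n \mathcal{A}_n$ as $m \to \infty$, continuity of measure gives $\mathbb{P}(\limsup_n \mathcal{A}_n) = \lim_m \mathbb{P}(\bigcup_{k \geq m} \mathcal{A}_k)$, which by the previous two steps is bounded below by the right-hand side of (\ref{formula 13}). The main obstacle is the second step: carefully verifying that tail-truncation leaves the $\limsup$ invariant. This is exactly where the divergence hypothesis enters essentially, because without $\sigma_N \to \infty$ the $O_m(\sigma_N)$ corrections need not be negligible, and the equality of the two $\limsup$s (and hence the bound on $\mathbb{P}(\limsup \mathcal{A}_n)$ as opposed to a bound on a single union) could fail.
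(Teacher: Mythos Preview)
Your proof is correct; this is the standard Chung--Erd\H{o}s/Paley--Zygmund argument followed by the tail-truncation step that upgrades the union bound to a $\limsup$ bound. The paper itself does not supply a proof of Lemma~\ref{lemma 16}: it quotes the result as the second Borel--Cantelli lemma due to Erd\H{o}s and R\'{e}nyi and gives the reference \cite{ErdosRenyi}, so there is no in-paper argument to compare against. Your handling of the only delicate point---showing that the $\limsup$ of the ratio is unchanged when the first $m-1$ terms are dropped, via $\tau_N \geq \sigma_N^2$ and $\sigma_N \to \infty$---is exactly right and is where the divergence hypothesis is genuinely used.
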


To the author's knowledge, the proofs of all known results towards
the Duffin-Schaeffer conjecture regard Lemma \ref{lemma 16} as an
indispensable tool (see \cite{Haynes p-adic}), showing under various
additional conditions the quasi-independence on average property for
$\{{\mathcal E}_n(\psi)\}$, that is, proving
\begin{equation}\label{formula 19}
\sum_{m=1}^N\sum_{n=1}^N\lambda({\mathcal E}_m\cap{\mathcal
E}_n)\ll\big(\sum_{n=1}^N\lambda({\mathcal E}_n)\big)^2
\end{equation}
for infinitely many $N\in\mathbb{N}$.
 Recently, Feng, Shen and the author
(\cite{FengLiShen}) generalized the above lemma to

\begin{lemma}\label{lemma 17}Let $\{{\mathcal A}_n\}_{n\in\mathbb{N}}$ be a sequence of events in
a probability space $(\Omega,\mathbb{P})$ and let
$\{\omega_n\}_{n\in\mathbb{N}}$ be a sequence of real numbers such
that $\sum_n\omega_n\mathbb{P}({\mathcal A}_n)=\infty$. Then
\begin{equation}\label{formula 14}
\lambda(\limsup_{n\rightarrow\infty}{\mathcal
A}_n)\geq\limsup_{n\rightarrow\infty}\frac{\displaystyle\big(\sum_{k=1}^n\omega_k\mathbb{P}({\mathcal
A}_k)\big)^2}{\displaystyle\sum_{i=1}^n\sum_{j=1}^n\omega_i\omega_j\mathbb{P}({\mathcal
A}_i\cap{\mathcal A}_j)}.\end{equation}
\end{lemma}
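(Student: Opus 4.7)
The plan is to upgrade the classical Erd\H{o}s--R\'enyi argument underlying Lemma~\ref{lemma 16} to the weighted setting, exploiting the fact that the core Cauchy--Schwarz step is insensitive to the signs of the summands. My first move is to establish a finite-cutoff estimate: for each pair $1 \leq M \leq N$, introduce the random variable $Z_N^{(M)} := \sum_{k=M}^N \omega_k \mathbf{1}_{\mathcal{A}_k}$, which vanishes outside $\bigcup_{n=M}^N \mathcal{A}_n$. Writing $\mathbb{E}[Z_N^{(M)}] = \mathbb{E}\big[Z_N^{(M)} \mathbf{1}_{\bigcup_{n=M}^N \mathcal{A}_n}\big]$ and applying the Cauchy--Schwarz inequality gives
\begin{equation*}
\mathbb{P}\Big(\bigcup_{n=M}^N \mathcal{A}_n\Big) \geq \frac{\big(\sum_{k=M}^N \omega_k \mathbb{P}(\mathcal{A}_k)\big)^2}{\sum_{i,j=M}^N \omega_i \omega_j \mathbb{P}(\mathcal{A}_i \cap \mathcal{A}_j)},
\end{equation*}
valid for arbitrary real-valued $\omega_k$. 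This is the finite-cutoff analogue of the target bound, but with sums running from $M$ to $N$ rather than from $1$ to $N$.

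Next, I would remove the lower cutoff $M$ asymptotically. Set $S_N := \sum_{k=1}^N \omega_k \mathbb{P}(\mathcal{A}_k) = \mathbb{E}[Z_N^{(1)}]$ and $T_N := \mathbb{E}[(Z_N^{(1)})^2]$, so that the bound in Lemma~\ref{lemma 17} is exactly $\limsup_N S_N^2/T_N$. The divergence hypothesis forces $S_N \to \infty$, and since $S_N^2 \leq T_N$ by Jensen's inequality, $T_N \to \infty$ as well. For fixed $M$, the constant $C_M := \max\{|\mathbb{E}[Z_{M-1}^{(1)}]|, \|Z_{M-1}^{(1)}\|_{L^2}\}$ depends only on $M$, and Minkowski's inequality in $L^2$ yields $\|Z_N^{(M)}\|_{L^2} \leq \sqrt{T_N} + C_M$ together with $|\mathbb{E}[Z_N^{(M)}] - S_N| \leq C_M$. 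Consequently, as $N \to \infty$ with $M$ held fixed, the right-hand side of the finite-cutoff estimate above is bounded below by $(1-o(1)) S_N^2/T_N$.

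The argument concludes by taking limits in the correct order: for each fixed $M$, $\limsup_{N \to \infty}$ combined with the monotone continuity $\mathbb{P}(\bigcup_{n=M}^N \mathcal{A}_n) \nearrow \mathbb{P}(\bigcup_{n \geq M} \mathcal{A}_n)$ produces $\mathbb{P}(\bigcup_{n \geq M} \mathcal{A}_n) \geq \limsup_{N} S_N^2/T_N$; since the right-hand side is independent of $M$, letting $M \to \infty$ and applying continuity from above of $\mathbb{P}$ yields (\ref{formula 14}). The only genuinely delicate point is the asymptotic removal of the cutoff: because the weights $\omega_k$ may be signed, one cannot appeal to monotonicity of partial sums to argue that dropping the first $M-1$ terms is harmless. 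Minkowski's inequality in $L^2$ is what makes this harmlessness quantitative, and it is essentially the only place where the weighted extension differs from the unweighted Erd\H{o}s--R\'enyi proof.
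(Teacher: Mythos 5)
Your argument is correct: the Cauchy--Schwarz step on $Z_N^{(M)}$ is sign-insensitive, the Minkowski bound legitimately removes the lower cutoff since $S_N,T_N\to\infty$, and the two limits are taken in the right order. The paper itself defers the proof of Lemma~\ref{lemma 17} to the cited reference of Feng, Li and Shen, and your proof is exactly the standard weighted Erd\H{o}s--R\'enyi second-moment argument one would expect there, so there is nothing to add.
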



The novelty of Lemma \ref{lemma 17} in our study of the
Duffin-Schaeffer conjecture is that we can choose $\omega_n$ to be
some particular fraction numbers to obtain some good effects. For
example we can show
\begin{theorem}\label{theorem 19} Let $\psi:\mathbb{N}\rightarrow\mathbb{R}$ be a
non-negative function.  Then $\lambda(W(\psi))=1$ if
\begin{equation}\label{formula 111111111111}
\sum_{h: S_h\geq 3}\frac{\log S_h}{h\cdot \log\log S_h}=\infty,
\end{equation}
 where
\begin{equation}\label{formula 17}
S_h=\sum_{n=2^{2^h}+1}^{2^{2^{h+1}}}\frac{\psi(n)\varphi(n)}{n}.
\end{equation}
\end{theorem}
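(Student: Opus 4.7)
The plan is to deploy the weighted second Borel--Cantelli lemma (Lemma \ref{lemma 17}) with weights adapted to the dyadic blocks $B_h := \{n : 2^{2^h} < n \leq 2^{2^{h+1}}\}$, combined with a greedy thinning inside each block and the Pollington--Vaughan overlap estimate, then to invoke Gallagher's zero--one law to pass from positive to full Lebesgue measure.

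First I would reduce to the case $\psi(n)\leq 1/2$ for every $n$ (larger values of $\psi$ only enlarge $W(\psi)$), so that by (\ref{formula 141414}) we have $\lambda({\mathcal E}_n)\asymp\psi(n)\varphi(n)/n$. Next, for each $h$ with $S_h\geq 3$, I would greedily select a sub-family $T_h\subseteq B_h$ whose cumulative Lebesgue measure is comparable to $\log S_h$, that is
\[
\log S_h \;\leq\; \sum_{n\in T_h}\lambda({\mathcal E}_n) \;\leq\; 2\log S_h,
\]
which is possible because every individual $\lambda({\mathcal E}_n)\leq 1$ and the full block mass is $\asymp S_h\geq \log S_h$.

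Then I would apply Lemma \ref{lemma 17} to the sequence $\{{\mathcal E}_n\}_{n\in\mathbb{N}}$ with weights $\omega_n=(h\log\log S_h)^{-1}\mathbf{1}_{n\in T_h}$. The numerator of (\ref{formula 14}) becomes
\[
\sum_n\omega_n\lambda({\mathcal E}_n)\;\asymp\;\sum_{h:S_h\geq 3}\frac{\log S_h}{h\log\log S_h}\;=\;\infty
\]
by hypothesis (\ref{formula 111111111111}). For the denominator, within-block pairs $(m,n)\in T_h\times T_h$ are handled via the Pollington--Vaughan overlap bound $\lambda({\mathcal E}_m\cap{\mathcal E}_n)\ll \lambda({\mathcal E}_m)\lambda({\mathcal E}_n)\cdot K_{m,n}$, where the quasi-independence factor $K_{m,n}$, once summed over $T_h\times T_h$, is controlled by Mertens' theorem applied to primes up to $2^{2^{h+1}}$ and hence contributes at most a bounded power of $h$; between-block pairs are treated by the trivial bound $\lambda({\mathcal E}_m\cap{\mathcal E}_n)\leq \min(\lambda({\mathcal E}_m),\lambda({\mathcal E}_n))$, which is negligible given the doubly exponential spacing of the block endpoints. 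Combining these estimates shows that the denominator of (\ref{formula 14}) is bounded by a constant multiple of the square of the numerator, so Lemma \ref{lemma 17} yields $\lambda(W(\psi))>0$, and Gallagher's zero--one law upgrades this to $\lambda(W(\psi))=1$.

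\textbf{Main obstacle.} The decisive step is the denominator estimate, where the Pollington--Vaughan prime-divisor penalty for pairs inside $T_h$ grows like a power of $h$, so the weights $(h\log\log S_h)^{-1}$ have to be calibrated sharply to absorb this blow-up while keeping the weighted sum divergent. The greedy choice of $T_h$ must also be robust enough that its members do not conspire to share excessive common prime factors; securing this calls for a careful pigeonhole argument inside each block, and it is exactly the $1/(h\log\log S_h)$ loss in the weighting that forces the appearance of this combination in the hypothesis (\ref{formula 111111111111}).
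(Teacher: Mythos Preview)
Your architecture (block decomposition, weighted Borel--Cantelli, Gallagher) matches the paper's, but the two decisive estimates are not under control as written.

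\emph{Within-block overlap.} A greedy selection $T_h$ by total mass $\asymp\log S_h$ does nothing to tame the Pollington--Vaughan penalty $P(m,n)\ll h/(2+\log D(m,n))$: the worst case $P(m,n)\asymp h$ is not excluded, so with your weights the block contributes $\ll h\,b_h^2$ to the denominator, where $b_h=\tfrac{\log S_h}{h\log\log S_h}$. You would then need $\sum_{h\le N}h\,b_h^2\ll(\sum_{h\le N}b_h)^2$, and this fails whenever the $b_h$ live on a sparse set of indices (take $b_h\asymp 1$ for $h=2^k$ and $b_h=0$ otherwise: at $N=2^K$ the ratio is $\asymp 2^K/K^2\to\infty$). ``A bounded power of $h$'' is therefore too weak; what is actually required is $Q_h'/(S_h')^2\ll h\log\log S_h/\log S_h$, and mass-thinning alone cannot deliver this. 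The paper achieves exactly this bound not by thinning but by \emph{rescaling}: on each block it replaces $\psi$ by $\psi/e^{k_h}$ for a well-chosen integer $0\le k_h\le\log S_h$. The existence of a good $k_h$ is the real content: one stratifies the overlap sum as $\sum_j A_j^{(h)}$ according to $e^j\le D(m,n)<e^{j+1}$, convolves with a truncated harmonic kernel of length $\log S_h/\log\log S_h$, and picks $k_h$ where the convolution sits below its average value. This pigeonhole-by-convolution step is what forces the combination $h\log\log S_h/\log S_h$ to appear, and it is absent from your sketch.

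\emph{Between-block overlap.} The trivial bound $\lambda({\mathcal E}_m\cap{\mathcal E}_n)\le\min(\lambda({\mathcal E}_m),\lambda({\mathcal E}_n))$ is useless here: it does not see the spacing, and summing it over $T_{h_1}\times T_{h_2}$ picks up the cardinality of $T_{h_2}$, which can be doubly exponential. The correct route is already set up in the paper's preliminaries: after the Erd\H{o}s--Vaaler reduction $\psi(n)\ge 1/n$ and passage to every third block one has $D(m,n)\ge\sqrt{n}$ for cross-block pairs, whence $P(m,n)\ll 1$ and genuine quasi-independence $\lambda({\mathcal E}_m\cap{\mathcal E}_n)\ll\lambda({\mathcal E}_m)\lambda({\mathcal E}_n)$ (formula~(\ref{formula 25555})). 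You should invoke this, not the trivial min bound.
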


This generalizes a  recent result by Beresnevich et al. (\cite[Thm.
2]{BHHV}) who showed $\lambda(W(\psi))=1$ if there exists a constant
$c>0$ such that
\begin{equation}
\sum_{n=16}^{\infty}\frac{\varphi(n)\psi(n)}{n\exp(c(\log\log
n)(\log\log\log n))}=\infty.
\end{equation}

In metric number theory  the second Borel-Cantelli lemma is so
frequently used in the study of many other problems (\cite{Harman
2000}), for which one may naturally expect that Lemma \ref{lemma 17}
could bring new insight as well as new results.

This paper is mainly arranged as follows:
\begin{itemize}
\item Sections \ref{dsc1}$\sim$\ref{dsc3} are devoted to establishing various
equivalent forms for the classical Duffin-Schaeffer conjecture. In
particular, a general principle will be introduced after the proof
of Theorem \ref{theorem 17}.

\item Sections \ref{padic}, \ref{Laurent} are devoted to studying  the
Duffin-Schaeffer type conjectures in the fields of $p$-adic numbers
and formal Laurent series, respectively.

\item Section
\ref{App BC} is devoted to the proof of Theorem \ref{theorem 19}.
\end{itemize}

\section{Preliminaries}

\subsection{Hausdorff measure}

A dimension function $f:\mathbb{R}^{+}\rightarrow\mathbb{R}^{+}$ is
a continuous, non-decreasing function such that $f(r)\rightarrow0$
as $r\rightarrow0$. A $\rho$-cover of a subset $A$ of $\mathbb{R}$
is a countable collection $\{B_i\}$ of open intervals in
$\mathbb{R}$ with radii $r_i\leq\rho$ for each $i$ such that
$A\subset\bigcup_iB_i$. Define
\[{\mathcal H}_{\rho}^f(A)=\inf\sum_if(r_i),\]
where the infimum is taken over all $\rho$-covers of $A$. The
Hausdorff $f$-measure of $A$ is defined as
\[{\mathcal H}_{\rho}(A)=\lim_{\rho\rightarrow0}{\mathcal H}_{\rho}^f(A).\]
For detailed discussions of the Hausdorff measure theory,  we refer
the readers to the classical book \cite{Falconer} by Falconer.

\subsection{Series} It is well-known  that there is no fastest
converging or slowest diverging series, that is, for any convergent
non-negative series $\sum_{n}a_n$, there exists a sequence of
(increasing) real numbers $\{x_n\}$ with $x_n\rightarrow\infty$ as
$n\rightarrow\infty$, such that $\sum_{n}a_nx_n$ is also convergent;
 and for any
divergent non-negative series $\sum_{n}b_n$, there exists a sequence
of (decreasing) real numbers $\{y_n\}$ with $y_n\rightarrow0$ as
$n\rightarrow\infty$, such that $\sum_{n}b_ny_n$ is also divergent.
In fact, one can choose (\cite[Exer. 11 \& 12, Chap. 3]{Rudin})
\[
x_n=\frac{1}{\displaystyle\sqrt{\sum_{k=n}^{\infty}(a_k+\frac{1}{k^2})}},\
\  y_n=\frac{1}{\displaystyle\sum_{k=1}^n(b_k+\frac{1}{k^2})}.
\]
We may further require that $x_{n+1}-x_n\leq1$ for all $n$ as if not
then we can replace $\{x_n\}$ with $\{z_n\}$ defined recursively by
$z_1=x_1$,
\[z_{n+1}=z_n+\min\{1,x_{n+1}-x_n\}\ \ \ (n\in\mathbb{N}).\]
The reason is very simple and left as an exercise to the interested
readers.

\subsection{Upper bounds}

 In the following we prepare some upper bounds for
$\lambda({\mathcal E}_m\cap {\mathcal E}_n)$, and always assume
$m\neq n$, $m,n\geq2$. First, Duffin and Schaeffer (\cite[Lemma
II]{DuffinSchaeffer}, see also \cite[formula (5)]{BHHV}) proved that
\begin{equation}\label{duffin schaeffer estimate}\lambda({\mathcal E}_m\cap {\mathcal
E}_n)\leq8\psi(m)\psi(n).\end{equation} Second, we have
\begin{equation}\label{formula 2222}\lambda({\mathcal E}_m\cap {\mathcal
E}_n)\ll \lambda({\mathcal E}_m) \lambda({\mathcal
E}_n)P(m,n),\end{equation} where \[P(m,n)=\prod_{p|B(m,n),
p>D(m,n)}(1-\frac{1}{p})^{-1},\] with
$B(m,n)\triangleq\frac{mn}{(m,n)^2}$,
$D(m,n)\triangleq\frac{1}{(m,n)}\cdot\max\{n\psi(m),m\psi(n)\}.$ If
$\{p:p|B(m,n), p>D(m,n)\}=\emptyset$, we understand that
$P(m,n)\equiv1$. This estimate was first stated by Strauch
(\cite{Strauch}), but was also given independently by Pollington and
Vaughan (\cite{PV2}).
 By
one of Merten's theorems (\cite[Theorem 328]{Hardy}) we see that if
$P(m,n)>1$ and $D(m,n)\geq\frac{1}{2}$ then
\begin{equation}\label{formula 25}
P(m,n)\ll\exp\Big(\sum_{D(m,n)<p<\log
B(m,n)}\frac{1}{p}\Big)\ll\frac{\log\log B(m,n)}{2+\log D(m,n)}.
\end{equation}
Finally, we can find in \cite{PV2} that if $D(m,n)<0.5$, then
${\mathcal E}_m\cap {\mathcal E}_n$ is an empty set. This fact is
also easily implied by formula (10) in \cite{BHHV}.

Formula (\ref{formula 25}) is very useful and we will explain in
more detail. Throughout the paper for any $h\in\mathbb{N}$ and any
non-negative function $\psi$, we denote
\begin{itemize}
\item $\Delta_h=\mathbb{N}\cap[2^{2^h}+1,2^{2^{h+1}}]$,
\item
$S_h=S_h(\psi)=\sum_{n\in\Delta_h}\frac{\psi(n)\varphi(n)}{n}$,
\item $B_h=B_h(\psi)=\lambda(\bigcup_{n\in\Delta_h}{\mathcal E}_n)$,
\item $Q_h=Q_h(\psi)={\sum_{(m,n)\in\Delta_h\times\Delta_h,m\neq
n}\lambda({\mathcal E}_m\cap{\mathcal E}_n)}$,
\item $R_h=R_h(\psi)={Q_h}/{S_h^2}$.
\end{itemize}
 To attack the
Duffin-Schaeffer conjecture we need first assume $\sum_hS_h=\infty$.
Note there exists an integer $i\in\{0,1,2\}$ such that
$\sum_hS_{3h+i}=\infty$. By appealing to the Erd\"{o}s-Vaaler
theorem (\cite{Erdos,Vaaler}), to prove $\lambda(W(\psi))=1$ we may
assume without loss of generality that $\psi(n)\geq\frac{1}{n}$
whenever $\psi(n)\neq0$.  Now for any two distinct positive integers
$h_1<h_2$ and for any $m\in\Delta_{3h_1+i}$, $n\in\Delta_{3h_2+i}$,
we have $\log\log B(m,n)\ll h_2$ and
$D(m,n)\geq\frac{n}{m^2}\geq\sqrt{n}$, which in turn gives
\begin{align}\label{formula 2999}
P(m,n)\ll\max\{\frac{\log\log B(m,n)}{2+\log
D(m,n)},1\}\ll\max\{\frac{h_2}{2^{h_2}},1\}\ll1. \end{align} We
should remark that the above kind of arguments was first observed by
Haynes, Pollington and Velani (\cite{Haynes}, see also
\cite{Aistleitner,BHHV}). Hence to study the Duffin-Schaeffer
conjecture it brings no harm for us to assume $\sum_hS_h=\infty$
together with:
\begin{align}\label{formula 25555}
P(m,n)\ll1\ \mbox{for any}\ m,n\ \mbox{in any corresponding distinct
blocks}\ \Delta_{h_1},\Delta_{h_2}.\end{align}

\section{Equivalence  of the Duffin-Schaeffer conjecture
(1)}\label{dsc1}

The section is mainly devoted to the proof of Theorem \ref{theorem
17}. A  general principle and a higher-dimensional analogue will
also be established.

\subsection{Proof of Theorem \ref{theorem 17}}

For any non-negative function
$\psi:\mathbb{N}\rightarrow\mathbb{R}$, we denote
\begin{equation}S(\psi)=\sum_{n=1}^{\infty}\frac{\psi(n)\varphi(n)}{n}.\end{equation}
 For
any $N\in\mathbb{N}$,  we set
\begin{equation}A_N=\inf\{\lambda(Z(\psi)): \mbox{supp}(\psi)\subset[N,\infty)\ \ \mbox{is bounded}, S(\psi)\geq1\}.\end{equation}
Obviously, $0\leq A_1\leq A_2\leq A_3\leq\cdots\leq 1$. So we can
define
\begin{equation}
A_{\infty}=\lim_{N\rightarrow\infty}A_N.
\end{equation}

\textsc{Claim 1}: The Duffin-Schaeffer conjecture is true if and
only if $A_{\infty}>0$.

\textsc{Proof of Claim 1}: ``$\Leftarrow$" Suppose $A_{\infty}>0$.
Let $\psi$ be any non-negative function such that $S(\psi)=\infty$.
We need to show that $\lambda(W(\psi))=1$. Let $N_1<N_2<N_3<\cdots$
be any fixed sequence of positive integers such that
\[\sum_{n=N_k+1}^{N_{k+1}}\frac{\psi(n)\varphi(n)}{n}\geq1.\]
 By definition if $k$ is large enough,
then
\[\lambda(\bigcup_{n=N_k+1}^{N_{k+1}}{\mathcal E}_n)\geq\frac{A_{\infty}}{2}.\]
By the continuity of the Lebesgue measure we have
\[\lambda(W(\psi))=\lim_{N\rightarrow\infty}\lambda\big(\bigcup_{n=N}^{\infty}{\mathcal E}_n\big)\geq\frac{A_{\infty}}{2},\]
which gives $\lambda(W(\psi))=1$ by applying Gallagher's zero-one
law. This proves the Duffin-Schaeffer conjecture under the
assumption $A_{\infty}>0$.

``$\Rightarrow$" Suppose the Duffin-Schaeffer conjecture is true. We
argue by contradiction and suppose $A_{\infty}=0$. Thus $A_N=0$ for
any $N\in\mathbb{N}$. Obviously, be the definitions of $A_N$ we can
find a sequence of non-negative functions $\{\psi_k\}$ and a
 sequence of positive integers $N_1<N_2<N_3<\cdots$  such that
 $\mbox{supp}(\psi_k)\subset[N_k+1,N_{k+1}]$,
\[\sum_{n=N_k+1}^{N_{k+1}}\frac{\psi_k(n)\varphi(n)}{n}\geq1,\]
and \[\lambda(\bigcup_{n=N_k+1}^{N_{k+1}}{\mathcal
E}_n(\psi_k))\leq\frac{1}{k^2}.\] Gluing this sequence of disjointly
supported functions $\{\psi_k\}$ into a new function $\psi$, it is
easy to deduce from the first Borel-Cantelli lemma that
$\lambda(W(\psi))=0$. Note $S(\psi)=\infty$. So we get a
contradiction to the assumed truth of Duffin-Schaeffer conjecture.
This finishes the whole proof of Claim 1.

\textsc{Claim 2}: $A_{1}>0\Leftrightarrow A_{\infty}>0$.

\textsc{Proof of Claim 2}: Obviously, $A_{1}>0$ implies
$A_{\infty}>0$. So we need only to show that $A_{\infty}>0$ implies
$A_{1}>0$. By the continuity of the Lebesgue measure it suffices to
give a universal lower bound for $\lambda(Z(\psi))$, where $\psi$ is
any non-negative function with bounded support and $S(\psi)\geq1$.
To this aim we first choose an $N\in\mathbb{N}$ such that
$A_N\geq\frac{A_{\infty}}{2}$, then decompose $\psi$ as the sum of
two unique non-negative functions $\psi_1$, $\psi_2$ with
corresponding bounded supports in $[1,N-1]$ and $[N,\infty)$. Now we
have two cases to consider.

Case 1: Suppose $S(\psi_2)\geq1-\frac{A_{\infty}}{8}$. Let $\psi_3$
be any non-negative function with bounded support in
$(\max\mbox{supp}(\psi_2),\infty)$ such that
$S(\psi_3)=\frac{A_{\infty}}{8}$. Note
\[\frac{A_{\infty}}{2}\leq\lambda(Z(\psi_2+\psi_3))\leq\lambda(Z(\psi_2))+2S(\psi_3)\leq
\lambda(Z(\psi_2))+\frac{A_{\infty}}{4},\] from which we deduce
$\lambda(Z(\psi))\geq\lambda(Z(\psi_2))\geq\frac{A_{\infty}}{4}.$

Case 2: Suppose $S(\psi_2)<1-\frac{A_{\infty}}{8}$. Choose an $n<N$
such that
$\frac{\psi(n)\varphi(n)}{n}\geq\frac{A_{\infty}}{8(N-1)}.$ Applying
the left hand side of (\ref{formula 141414})  gives
\[\lambda(Z(\psi))\geq\lambda({\mathcal E}_n)\geq\min\{\frac{A_{\infty}}{8(N-1)},\frac{1}{2}\}.\]
This finishes the whole proof of Claim 2.

\textsc{Claim 3}: For any $t\geq1$ and any non-negative function
$\psi$,  $\lambda(Z(t\psi))\leq t\lambda(Z(\psi))$.

\textsc{Proof of Claim 3}: By the continuity of the Lebesgue
measure, we may assume without loss of generality that $\psi$ is of
bounded support, and suppose this is the case. Since ${\mathcal
E}_n$ is an open set in $\mathbb{R}/\mathbb{Z}$ for any
$n\in\mathbb{N}$, we see that $Z(\psi)$ is the union of finitely
many pairwise disjointly supported open subsets
 of  $\mathbb{R}/\mathbb{Z}$, say for example,
\begin{align}\label{claim 3 proof}Z(\psi)=\bigcup_{k=1}^M\{e^{2\pi iy}:y\in
 I_k\},\end{align}
 where
 $I_k=(x_k-r_k,x_k+r_k)$, $k=1,2,\ldots,M$, are pairwise disjointly supported open intervals in
 $(-1,2)$.
 For the sake of simplicity we identify   $\{e^{2\pi iy}:y\in
 I_k\}$ with $I_k$.
 Now suppose $x\in Z(t\psi)$. This means  one can find a coprime pair
 $(m,n)$ such that
$|x-\frac{m}{n}|<\frac{t\psi(n)}{n}.$
 According to the decomposition (\ref{claim 3 proof}), there exists
 an $I_k$ such that
 \[(\frac{m}{n}-\frac{\psi(n)}{n},\frac{m}{n}+\frac{\psi(n)}{n})\subset I_k.\]
Comparing the lengths of the above two intervals, we get
$\frac{\psi(n)}{n}\leq r_k$. Consequently,
\begin{align*}x&\in(\frac{m}{n}-\frac{t\psi(n)}{n},\frac{m}{n}+\frac{t\psi(n)}{n})\\&=(\frac{m}{n}-\frac{\psi(n)}{n}-\frac{(t-1)\psi(n)}{n},
\frac{m}{n}+\frac{\psi(n)}{n}+\frac{(t-1)\psi(n)}{n})\\
&\subset(x_k-r_k-(t-1)r_k,x_k+r_k+(t-1)r_k)\\
&=(x_k-tr_k,x_k+tr_k),\end{align*} which naturally implies that
\[Z(t\psi)\subset\bigcup_{k=1}^M(x_k-tr_k,x_k+tr_k).\]
So we have
$\lambda(Z(t\psi))\leq\sum_{k=1}^M2tr_k=t\lambda(Z(\psi)).$ This
finishes the proof of Claim 3.

\textsc{Proof of Theorem \ref{theorem 17}}: We first note that if
Conjecture \ref{Conjecture 16} is true, then $A_1>0$, or
equivalently by Claims 1 \& 2, the Duffin-Schaeffer conjecture is
true. Next, we assume the Duffin-Schaeffer conjecture is true and
are going to show that Conjecture \ref{Conjecture 16} is  also true.
To this aim it suffices to establish for  any non-negative function
$\psi$ that
\[\lambda(Z(\psi))\geq A_1\min\{S(\psi),1\},\]
where $A_1>0$ follows from Claims 1 \& 2. By the continuity of the
Lebesgue measure, we may further assume without loss of generality
that $\psi$ is of non-empty bounded support. Now we have two cases
to consider.

Case 1: Suppose $S(\psi)\geq1$. By the definition of $A_1$ we have
$\lambda(Z(\psi))\geq A_1$.

Case 2: Suppose $S(\psi)<1$. Let
\[t\triangleq \frac{1}{S(\psi)}>1,\]
which means $S(t\psi)=1$. By  the definition of $A_1$,
$\lambda(Z(t\psi))\geq A_1$. By Claim 3,
\[\lambda(Z(\psi))\geq\frac{\lambda(Z(t\psi))}{t}\geq A_1S(\psi).\] This
finishes the whole proof of Theorem \ref{theorem 17}.

\subsection{A general principle} Let $\{\Omega,{\mathcal
F},\mathbb{P}\}$ be a probability space and let ${\mathcal F}_n$
($n\in\mathbb{N}$) be a fixed subset of ${\mathcal F}$. $\{{\mathcal
F}_n\}_n$ is said to have the Duffin-Schaeffer property if
\begin{equation}\sum_{n=1}^{\infty}\mathbb{P}(E_n)=\infty\Rightarrow
\mathbb{P}(\limsup_{n\rightarrow\infty}E_n)=1
\end{equation} for any sequence
of events $\{E_n\}_n$ with $E_n\in{\mathcal F}_n$
($n\in\mathbb{N}$). Similarly, $\{{\mathcal F}_n\}_n$ is said to
have the zero-one property if
\begin{equation}
\mathbb{P}(\limsup_{n\rightarrow\infty}E_n)\in\{0,1\}
\end{equation}
for any sequence of events $\{E_n\}_n$ with $E_n\in{\mathcal F}_n$
($n\in\mathbb{N}$). A general principle implied in the proofs of
Claims 1 \& 2
 is the following theorem whose proof is left as a simple exercise to the
 interested readers.

\begin{theorem}\label{theorem 71}
Suppose $\{{\mathcal F}_n\}_n$ is of the zero-one property. Then
$\{{\mathcal F}_n\}_n$ is of the Duffin-Schaeffer property if and
only if
\begin{equation}\inf\{\mathbb{P}(\bigcup_{n=1}^{\infty}E_n):\sum_{n=1}^{\infty}\mathbb{P}(E_n)\geq1, E_n\in{\mathcal F}_n\}>0.
\end{equation}
\end{theorem}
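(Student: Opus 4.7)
The plan is to lift the two-step scaffolding of Claims 1 and 2 to the abstract setting. Assume as a harmless convention that $\emptyset\in\mathcal{F}_n$ for every $n$, and set
\[
A_N=\inf\bigl\{\mathbb{P}(\bigcup_{n}E_n):E_n\in\mathcal{F}_n,\ E_n=\emptyset\text{ for }n<N,\ \sum_n\mathbb{P}(E_n)\geq 1\bigr\},
\]
together with $A_\infty=\lim_{N\to\infty}A_N$; the constant in the statement is $A_1$, and restriction gives $A_1\leq A_N\leq A_\infty$. My goal is the chain
\[
\text{Duffin-Schaeffer property}\ \Longleftrightarrow\ A_\infty>0\ \Longleftrightarrow\ A_1>0.
\]

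For the first equivalence, which is the Claim 1 analogue, the direction $A_\infty>0\Rightarrow$ DS proceeds by cutting $\mathbb{N}$ into consecutive blocks on which the partial sums of $\mathbb{P}(E_n)$ exceed $1$ and applying the $A_{N_k+1}$-bound to each zero-padded block to get $\mathbb{P}(\bigcup_{n>N_k}E_n)\geq A_\infty/2$; continuity of $\mathbb{P}$ and the zero-one property then yield $\mathbb{P}(\limsup E_n)=1$. The converse is a contrapositive: if $A_\infty=0$, pick witnesses $\{E_n^{(k)}\}$ whose index-supports are bounded (by truncation using continuity of $\mathbb{P}$) and lie in pairwise disjoint intervals $(N_{k-1},N_k]$, with mass $\geq 1$ and union-measure $\leq 2^{-k}$; the glued sequence has $\sum\mathbb{P}(E_n)=\infty$ but the first Borel-Cantelli lemma forces $\mathbb{P}(\limsup E_n)=0$, contradicting DS.

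For the second equivalence, which is the Claim 2 analogue, the direction $A_1>0\Rightarrow A_\infty>0$ is the trivial monotonicity. For the converse I would mirror Claim 2's dichotomy: given $\{E_n\}$ with $\sum\mathbb{P}(E_n)\geq 1$, fix $N$ with $A_N\geq A_\infty/2$ and split the sum at $N$. Either the tail contributes at least $1-A_\infty/8$, in which case a small padding past the support of $\{E_n\}$ followed by the $A_N$-bound yields $\mathbb{P}(\bigcup E_n)\geq A_\infty/4$; or the head contributes more than $A_\infty/8$, in which case pigeonholing among the $<N$ terms and using $\mathbb{P}(\bigcup E_n)\geq\sup_n\mathbb{P}(E_n)$ gives the bound $A_\infty/(8(N-1))$.

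The main obstacle will be the tail-padding step in the second equivalence, since the abstract framework lacks the scaling identity that Claim 3 supplied in the classical case; one must instead invoke a mild richness condition on $\{\mathcal{F}_n\}$ (the ability to realize arbitrarily small prescribed mass in the tail), which is freely available in all concrete instances treated in this paper.
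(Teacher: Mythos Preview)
Your proposal is essentially the paper's own approach: the paper explicitly says Theorem~\ref{theorem 71} is ``implied in the proofs of Claims 1 \& 2'' and leaves the details as an exercise, and you have faithfully abstracted those two claims. Your Claim~1 analogue is correct as written.

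Two remarks on the Claim~2 analogue. First, a small misattribution: the padding step in the paper's Claim~2 does \emph{not} rely on Claim~3; it simply constructs $\psi_3$ directly. Claim~3 (the scaling inequality) is only used later, to pass from $A_1>0$ to the full Conjecture~\ref{Conjecture 16}. Second, and more interestingly, the richness hypothesis you invoke is not actually needed: the theorem holds in full generality. The point is that if $A_1=0$ then automatically the ``threshold-$\tfrac12$'' version $A'_N\triangleq\inf\{\mathbb{P}(\bigcup_n E_n):E_n=\emptyset\text{ for }n<N,\ \sum_n\mathbb{P}(E_n)\geq\tfrac12\}$ vanishes for every $N$. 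Indeed, given $M$ and $\epsilon>0$, pick a witness $\{E_n\}$ with $\sum_n\mathbb{P}(E_n)\geq 1$ and $\mathbb{P}(\bigcup_n E_n)<\min(\epsilon,\tfrac{1}{2M})$; if $\sum_{n\leq M}\mathbb{P}(E_n)>\tfrac12$ then pigeonhole forces some $\mathbb{P}(E_k)>\tfrac{1}{2M}$, contradicting the union bound, so $\sum_{n>M}\mathbb{P}(E_n)\geq\tfrac12$ and the tail alone witnesses $A'_{M+1}<\epsilon$. With $A'_N=0$ for all $N$ your Claim~1 contrapositive runs unchanged (blocks of mass $\geq\tfrac12$ still sum to $\infty$), and no padding is required. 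So your identification of the obstacle is accurate, but it dissolves with this small pigeonhole tweak rather than requiring an extra hypothesis.
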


Note in the simultaneous and multiplicative Diophantine
approximation the zero-one property is in general not a big problem
(see e.g. \cite{BHV multi,BV08,Li1}) as we have the cross fibering
principle due to Beresnevich, Haynes and Velani (\cite{BHV multi}).
Many conjectures in metric number theory were formulated to prove a
particularly chosen $\{{\mathcal F}_n\}_n$ having the
Duffin-Schaeffer property (see e.g. \cite{BBDV,BHV multi,Li1}).
Hence Theorem \ref{theorem 71} provides a new way to look at such
kind of conjectures.

We remark that unconditionally one cannot expect
\begin{equation}\label{formula 733}
\mathbb{P}(\bigcup_{n=1}^{\infty}E_n)\asymp\min\{\sum_{n=1}^{\infty}\mathbb{P}(E_n),1\}
\end{equation}
as we have the following example in $\mathbb{R}/\mathbb{Z}$: define
\begin{align*}
{\mathcal F}_n&=\begin{cases}
\{\emptyset,[0,1)\} &  (n\ \mbox{is odd}) \\
\{\emptyset,[0,\frac{1}{n^2})\} & (n\ \mbox{is even}),
\end{cases}\\
E_n&=\begin{cases}
\emptyset &  (n\ \mbox{is odd or}\ n\ \mbox{is even with}\ n\leq 2N) \\
[0,\frac{1}{n^2}) & (n\ \mbox{is even with}\ n>2N),
\end{cases}
\end{align*}
where $N$ is large enough. Even though, one may still expect the
equivalence between (\ref{formula 733}) and the corresponding
Duffin-Schaeffer-type conjecture for some particularly chosen
$\{{\mathcal F}_n\}_n$ (see e.g. Theorems \ref{theorem 17},
\ref{theorem 78}, \ref{theorem 712} \& \ref{theorem 72}).

\subsection{Higher-dimensional case}

We are going to generalize Theorem \ref{theorem 17} to the higher
dimensions.

\begin{theorem}\label{theorem 72} Let $d\geq 2$.
Then
\begin{equation}\lambda_d\big(\bigcup_{n=1}^{\infty}{\mathcal E}_n^{d}\big)\geq
C_d\min\{\sum_{n=1}^{\infty}(\frac{\psi(n)\varphi(n)}{n})^d,1\},\end{equation}
where  $\lambda_d$ denotes the $d$-dimensional Lebesgue measure on
$(\mathbb{R}/\mathbb{Z})^d$, ${\mathcal E}_n^{d}$ denotes the
$d$-fold product of ${\mathcal E}_n$, $C_d>0$ depends only on $d$.
\end{theorem}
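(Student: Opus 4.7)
The plan is to transplant the three-claim structure behind the proof of Theorem \ref{theorem 17} from $\mathbb{R}/\mathbb{Z}$ to $(\mathbb{R}/\mathbb{Z})^d$. The decisive new ingredient is that the $d$-dimensional Duffin-Schaeffer conjecture is already unconditionally known for $d\geq 2$ (Pollington-Vaughan; see also \cite[Corollary 1]{BV06}), so what appeared in the one-dimensional case as an equivalence becomes an unconditional lower bound here, delivering an absolute constant $C_d>0$.

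First I would introduce, by direct analogy,
\[
S^{(d)}(\psi)=\sum_{n=1}^{\infty}\Big(\frac{\psi(n)\varphi(n)}{n}\Big)^d,\qquad Z^{(d)}(\psi)=\bigcup_{n=1}^{\infty}{\mathcal E}_n^d,
\]
together with $A_N^{(d)}=\inf\{\lambda_d(Z^{(d)}(\psi)):\mathrm{supp}(\psi)\subset[N,\infty)$ is bounded and $S^{(d)}(\psi)\geq 1\}$, and $A_\infty^{(d)}=\lim_{N\to\infty}A_N^{(d)}$. Since $\lambda_d({\mathcal E}_n^d)=\lambda({\mathcal E}_n)^d$ is comparable, up to a $d$-dependent factor, to $\min\{(\psi(n)\varphi(n)/n)^d,1\}$ by (\ref{formula 141414}), the gluing construction used in Claim 1 combined with Gallagher's zero-one law on $(\mathbb{R}/\mathbb{Z})^d$ (available via the cross-fibering principle of \cite{BHV multi}) yields, exactly as in one dimension, $A_\infty^{(d)}>0$ from the known $d$-dimensional Duffin-Schaeffer conjecture. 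The analog of Claim 2 then upgrades this to $A_1^{(d)}>0$: split $\psi=\psi_1+\psi_2$ with $\mathrm{supp}(\psi_1)\subset[1,N-1]$ and $\mathrm{supp}(\psi_2)\subset[N,\infty)$ for an $N$ with $A_N^{(d)}\geq A_\infty^{(d)}/2$; if $S^{(d)}(\psi_2)$ is close to $1$, append a small disjoint tail $\psi_3$ with $S^{(d)}(\psi_3)$ of order $2^{-d}A_\infty^{(d)}$ and use $\lambda_d(Z^{(d)}(\psi_3))\leq 2^d S^{(d)}(\psi_3)$; otherwise some single $n<N$ already supplies $\lambda_d({\mathcal E}_n^d)\gg A_\infty^{(d)}/N$ for that fixed $N$.

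The main obstacle is the $d$-dimensional analog of Claim 3, namely an inequality of the form
\[
\lambda_d(Z^{(d)}(t\psi))\leq \kappa_d\,t^d\,\lambda_d(Z^{(d)}(\psi))\qquad(t\geq 1)
\]
with $\kappa_d$ depending only on $d$. The one-dimensional proof exploited that $Z(\psi)$ is a finite disjoint union of open intervals in $\mathbb{R}/\mathbb{Z}$, which allowed one to enlarge each inner interval of radius $\psi(n)/n$ into a same-component interval of radius $tr_k$. In dimension $d\geq 2$ the components of $Z^{(d)}(\psi)$ can be arbitrarily irregular open sets, so I would instead apply the Vitali ($5r$-) covering lemma to the finite family of open cubes $\{B_\alpha\}$ whose union equals $Z^{(d)}(\psi)$: select a pairwise disjoint subfamily $\{B_\beta\}$ with $\bigcup_\alpha B_\alpha\subset\bigcup_\beta 5B_\beta$, each $B_\alpha$ assigned to a $B_\beta$ satisfying $r(B_\alpha)\leq 2r(B_\beta)$. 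A direct check using $|x_\alpha-x_\beta|_\infty<r_\alpha+r_\beta\leq 3r_\beta\leq t(5r_\beta-r_\alpha)$ shows $tB_\alpha\subset 5tB_\beta$ for every $t\geq 1$, whence
\[
\lambda_d(Z^{(d)}(t\psi))\leq\sum_\beta\lambda_d(5tB_\beta)=(5t)^d\sum_\beta\lambda_d(B_\beta)\leq 5^d t^d\,\lambda_d(Z^{(d)}(\psi)).
\]

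With these three analogs in hand the conclusion copies the endgame of Theorem \ref{theorem 17}: when $S^{(d)}(\psi)\geq 1$ the bound $\lambda_d(Z^{(d)}(\psi))\geq A_1^{(d)}$ is immediate from the definition; when $S^{(d)}(\psi)<1$ one sets $t=S^{(d)}(\psi)^{-1/d}>1$ so that $S^{(d)}(t\psi)=1$, applies $\lambda_d(Z^{(d)}(t\psi))\geq A_1^{(d)}$, and invokes the Claim 3 analog to obtain
\[
\lambda_d(Z^{(d)}(\psi))\geq 5^{-d}t^{-d}A_1^{(d)}=\frac{A_1^{(d)}}{5^d}\,S^{(d)}(\psi),
\]
giving the theorem with $C_d=A_1^{(d)}/5^d>0$.
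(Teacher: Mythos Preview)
Your proposal is correct and follows essentially the same route as the paper: the paper invokes the general principle (Theorem \ref{theorem 71}) to absorb Claims 1 and 2, uses the Pollington--Vaughan proof of the Sprind\u{z}uk conjecture as the unconditional input, and establishes the higher-dimensional Claim 3 (Lemma \ref{lemma 73}) via the Vitali covering lemma exactly as you do. Your explicit tracking of the constant $5^d$ and the check that $tB_\alpha\subset 5tB_\beta$ are slightly more detailed than the paper's sketch, but the argument is the same.
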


The key to the proof of Theorem \ref{theorem 72} is the following
generalization of Claim 3 in the higher-dimensions as Claims 1 \& 2
have just been generalized by Theorem \ref{theorem 71}, and the
Sprind\u{z}uk conjecture (\cite{Sprindzuk}, also known as the
higher-dimensional Duffin-Schaeffer conjecture) was confirmed by
Pollington and Vaughan (\cite{PV2}, see also \cite{Harman}).

\begin{lemma}\label{lemma 73} Let $d\geq 1$ and $t\geq1$.
Then there exists a constant $C_d>0$ depending only on $d$ such that
\begin{equation}\lambda_d\big(\bigcup_{n=1}^{\infty}{\mathcal E}_n(t\psi)^d\big)\leq C_dt^d\lambda_d
\big(\bigcup_{n=1}^{\infty}{\mathcal E}_n(\psi)^{d}\big).
\end{equation}
\end{lemma}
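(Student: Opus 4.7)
The plan is to generalize the argument of Claim 3 to $d$ dimensions by means of the $5r$-Vitali (Wiener) covering lemma applied to the collection of open sup-balls whose union is $Z_d(t\psi) := \bigcup_n {\mathcal E}_n(t\psi)^d$. The one-dimensional argument of Claim 3 relied on the fact that a union of open intervals decomposes as a disjoint union of open intervals, so that one can compare each component interval for $\psi$ with its dilation for $t\psi$; this decomposition already fails in dimension $d=2$, so a covering lemma must intervene.

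As preparatory reductions, I would first invoke the continuity of Lebesgue measure to assume $\psi$ has bounded support, whence only finitely many cubes appear in $Z_d(t\psi)$. Next, setting $\tilde\psi(n) := \min\{\psi(n), n/(2t)\}$, I would reduce the proof to $\tilde\psi$: on the one hand $\tilde\psi \leq \psi$ gives $Z_d(\tilde\psi) \subseteq Z_d(\psi)$; on the other, whenever $t\psi(n) > n/2$ the balls $B(m/n, t\psi(n)/n)$ and $B(m/n, t\tilde\psi(n)/n) = B(m/n, 1/2)$ both coincide with $\mathbb{R}/\mathbb{Z}$ up to a null set, so $\lambda_d(Z_d(t\tilde\psi)) = \lambda_d(Z_d(t\psi))$. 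After this reduction every cube $C_\alpha = B_\infty(y_\alpha, r_\alpha)$ appearing in $Z_d(t\psi)$---centred at $y_\alpha = (m_{1,\alpha}/n_\alpha, \ldots, m_{d,\alpha}/n_\alpha)$ with each $m_{i,\alpha}$ coprime to $n_\alpha$ and sup-radius $r_\alpha := t\psi(n_\alpha)/n_\alpha$---satisfies $r_\alpha \leq 1/2$.

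I would then apply the $5r$-Vitali covering lemma to the finite family $\{C_\alpha\}$ on the torus $(\mathbb{R}/\mathbb{Z})^d$ to extract a pairwise disjoint subcollection $\{C_\beta\}$ with $Z_d(t\psi) \subseteq \bigcup_\beta 5 C_\beta$, where $5 C_\beta := B_\infty(y_\beta, 5 r_\beta)$. For each selected $C_\beta$, the concentric small cube $C'_\beta := B_\infty(y_\beta, r_\beta/t) = B_\infty(y_\beta, \psi(n_\beta)/n_\beta)$ satisfies $C'_\beta \subseteq {\mathcal E}_{n_\beta}(\psi)^d \subseteq Z_d(\psi)$ and $C'_\beta \subseteq C_\beta$, whence the family $\{C'_\beta\}$ is pairwise disjoint inside $Z_d(\psi)$. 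Since $\lambda_d(C_\beta) = (2r_\beta)^d = t^d \lambda_d(C'_\beta)$ and $\lambda_d(5 C_\beta) \leq 5^d \lambda_d(C_\beta)$ in every case (the estimate remains valid after any torus wraparound, because then $5^d \lambda_d(C_\beta) = (10 r_\beta)^d \geq 1 \geq \lambda_d(5 C_\beta)$), summing over $\beta$ gives
\[
\lambda_d(Z_d(t\psi)) \leq \sum_\beta \lambda_d(5 C_\beta) \leq 5^d \sum_\beta \lambda_d(C_\beta) = 5^d t^d \sum_\beta \lambda_d(C'_\beta) \leq 5^d t^d \lambda_d(Z_d(\psi)),
\]
establishing the lemma with $C_d = 5^d$.

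The main technical point, and essentially the only real obstacle, is ensuring the Vitali extraction and the bound $\lambda_d(5 C_\beta) \leq 5^d \lambda_d(C_\beta)$ behave correctly on the compact torus, where 5-fold enlargements may wrap around themselves. The reduction $t\psi(n) \leq n/2$ keeps every original cube a genuine sup-ball of radius at most $1/2$, while the trivial majorisation of torus measure by the volume of the lifted cube in $\mathbb{R}^d$ handles every possible wraparound of the enlargement uniformly, obviating any case analysis in the final estimate.
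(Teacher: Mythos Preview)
Your argument is correct and follows essentially the same Vitali-covering route as the paper, with one worthwhile twist: the paper applies the Vitali lemma to the cubes making up $\bigcup_n{\mathcal E}_n(\psi)^d$ and then has to argue geometrically that the $t$-fold dilates are still absorbed by a bounded enlargement of the selected cubes (its Step~3), whereas you apply Vitali directly to the cubes of $\bigcup_n{\mathcal E}_n(t\psi)^d$ and simply \emph{shrink} each selected cube by the factor $t$ to land inside $\bigcup_n{\mathcal E}_n(\psi)^d$. Your order of operations lets you treat Vitali as a black box and skip the geometric step entirely, and your explicit reduction to $t\psi(n)\le n/2$ together with the torus-volume majorisation cleanly handles the wraparound issue that the paper's sketch leaves implicit; the cost is the small extra reduction to $\tilde\psi$, which the paper does not need because its selected cubes already sit inside the unit torus.
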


\begin{proof}
We only outline the proof in four steps and the interested readers
can easily provide all the details as the proof of Lemma \ref{lemma
73} is similar to that of Claim 3.
 First, by the continuity of the
Lebesgue measure we may assume $\psi$ is of bounded support, thus
$\bigcup_{n=1}^{\infty}{\mathcal E}_n(\psi)^{d}$ is the union of
finitely many  $d$-dimensional cubes $\{C(x_i,r_i)\}$; next, we
apply the classical Vitali covering lemma (see e.g. \cite{Stein}) to
choose a subcollection of disjointly supported cubes
$\{C(y_j,s_j)\}$ such that
\[\bigcup_iC(x_i,r_i)\subset\bigcup_jC(y_j,M_ds_j),\]
where $M_d>0$ is a universal constant depending only on $d$; after
that, via elementary geometric observation it is easy to show that
\[\bigcup_{n=1}^{\infty}{\mathcal E}_n(t\psi)^{d}=\bigcup_iC(x_i,tr_i)\subset\bigcup_jC(y_j,M_d^{(2)}ts_j),\]
where $M_d^{(2)}>0$  depends also only on $d$; finally, comparing
the $d$-dimensional measures of the left and right hand sides of the
last formula gives the desired result.
\end{proof}

\section{Equivalence  of the Duffin-Schaeffer conjecture (2)}\label{dsc2}

This section is devoted to providing more equivalent forms for the
classical Duffin-Schaeffer conjecture. As a byproduct, the question
of Haynes introduced in the first section will be given a
conditional but almost best possible answer.

First we observe from Claims 1 \& 2  in the proof of Theorem
\ref{theorem 17} (see also Theorem \ref{theorem 71}) that the
Duffin-Schaeffer conjecture implies the following

\begin{Conjecture}\label{73}
There exists a universal constant $C>0$ independent of
$h\in\mathbb{N}$ and non-negative function $\psi$ such that if
$S_h(\psi)=1$, then $B_h(\psi)\geq C$.
\end{Conjecture}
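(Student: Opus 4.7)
The plan is to reduce Conjecture \ref{73} to Claims 1 and 2 in the proof of Theorem \ref{theorem 17} by a one-line truncation argument; in effect, Conjecture \ref{73} is merely the uniform (in $h$) restatement of the one-block content of Conjecture \ref{Conjecture 16}.

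Concretely, I would first invoke Claims 1 and 2 from the proof of Theorem \ref{theorem 17}, which together yield that, under the Duffin-Schaeffer conjecture, $A_1>0$, where
\[
A_1=\inf\{\lambda(Z(\psi)):\mbox{supp}(\psi)\text{ is bounded},\ S(\psi)\geq1\}.
\]
Given any non-negative $\psi$ and any $h\in\mathbb{N}$ with $S_h(\psi)=1$, I would then define the truncated function
\[
\psi_h(n)=\begin{cases}\psi(n) & n\in\Delta_h,\\ 0 & \text{otherwise}.\end{cases}
\]
Since $\Delta_h$ is finite, $\psi_h$ is of bounded support, and $S(\psi_h)=S_h(\psi)=1$, so the very definition of $A_1$ gives $\lambda(Z(\psi_h))\geq A_1$. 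On the other hand, $\psi_h$ agrees with $\psi$ on $\Delta_h$ and vanishes elsewhere, so $Z(\psi_h)=\bigcup_{n\in\Delta_h}{\mathcal E}_n(\psi)$, whence $B_h(\psi)=\lambda(Z(\psi_h))\geq A_1$, and one may take $C=A_1$.

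The ``main obstacle'' is genuinely trivial: nothing in the argument uses the specific dyadic structure of $\Delta_h$ beyond its finiteness, and the universality of the resulting constant is simply inherited from the universality of $A_1$ produced by Theorem \ref{theorem 17}. Phrased differently, Conjecture \ref{73} is a direct corollary of Theorem \ref{theorem 17} obtained by applying that theorem to the restriction of $\psi$ to each block $\Delta_h$; conversely, since $\sum_h S_h=\infty$ forces infinitely many blocks to satisfy $S_h\geq1$ (after a routine rescaling using Claim 3), Conjecture \ref{73} is in fact essentially equivalent to the Duffin-Schaeffer conjecture, not merely implied by it.
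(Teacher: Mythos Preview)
Your proposal is correct and matches the paper's own argument: the paper states in a single sentence that ``we observe from Claims 1 \& 2 in the proof of Theorem \ref{theorem 17} \ldots\ that the Duffin-Schaeffer conjecture implies'' Conjecture \ref{73}, and your truncation $\psi_h$ is exactly the implicit step that makes this observation rigorous. Your closing remark on the converse is also in line with the paper, which passes through Claim 3 to reach Conjecture \ref{72} and then invokes Proposition \ref{prop 74} (using the cross-block overlap estimate (\ref{formula 25555})) to recover the Duffin-Schaeffer conjecture.
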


By Claim 3 in the proof of Theorem \ref{theorem 17}, this conjecture
is in turn equivalent to

\begin{Conjecture}\label{72}
There exists a universal constant $C>0$ independent of
$h\in\mathbb{N}$ and non-negative function $\psi$ such that if
$S_h(\psi)\leq1$, then $B_h(\psi)\geq CS_h(\psi)$.
\end{Conjecture}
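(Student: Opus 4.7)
The plan is to attack Conjecture \ref{72} directly by a second-moment estimate confined to the single block $\Delta_h$. First I would invoke the Erd\H{o}s--Vaaler reduction (exactly as in the derivation of (\ref{formula 25555})) to assume $\psi(n)\geq 1/n$ whenever $\psi(n)\neq 0$, and then dispose of the easy case in which some $n\in\Delta_h$ already has $\psi(n)\varphi(n)/n\geq 1/4$: the lower bound in (\ref{formula 141414}) then gives $B_h(\psi)\geq 1/4\geq S_h(\psi)/4$ under the hypothesis $S_h(\psi)\leq 1$. So I may further assume $\psi(n)\varphi(n)/n\leq 1/4$ for every $n\in\Delta_h$, in which case (\ref{formula 141414}) sharpens to $\lambda(\mathcal{E}_n)\asymp \psi(n)\varphi(n)/n$.

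Next I would apply the Chung--Erd\H{o}s (Paley--Zygmund type) inequality
\begin{equation*}
B_h(\psi)=\lambda\Big(\bigcup_{n\in\Delta_h}\mathcal{E}_n\Big)\geq\frac{\bigl(\sum_{n\in\Delta_h}\lambda(\mathcal{E}_n)\bigr)^{2}}{\sum_{m,n\in\Delta_h}\lambda(\mathcal{E}_m\cap\mathcal{E}_n)}.
\end{equation*}
The numerator is $\asymp S_h(\psi)^{2}$, and the diagonal part of the denominator is $\asymp S_h(\psi)\leq 1$. Hence the desired conclusion $B_h(\psi)\gg S_h(\psi)$ reduces to the within-block quasi-independence bound $Q_h(\psi)\ll S_h(\psi)^{2}$, i.e. $R_h(\psi)\ll 1$ in the notation of the Preliminaries.

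Then I would feed the Strauch / Pollington--Vaughan estimate (\ref{formula 2222}) together with Merten's bound (\ref{formula 25}) into $Q_h(\psi)$, reducing the problem to
\begin{equation*}
\sum_{\substack{m,n\in\Delta_h\\ m\neq n}}\frac{\psi(m)\varphi(m)}{m}\cdot\frac{\psi(n)\varphi(n)}{n}\cdot P(m,n)\ll S_h(\psi)^{2}.
\end{equation*}
Since the product of the two leading factors already sums to $\asymp S_h(\psi)^{2}$, what must be shown is that the weighted \emph{average} value of $P(m,n)$ over the block is $O(1)$. The cross-block trick of (\ref{formula 25555}), which exploited $D(m,n)\geq\sqrt{n}$, is unavailable here, so I would partition pairs $(m,n)\in\Delta_h^{2}$ by the size of $D(m,n)$: pairs with $D(m,n)\geq 2^{ch}$ for a suitable absolute constant $c>0$ automatically satisfy $P(m,n)\ll 1$ via (\ref{formula 25}) and contribute $\ll S_h(\psi)^{2}$, while the residual ``bad'' pairs with small $D(m,n)$ would be attacked by an anatomy-of-integers argument bounding how often $mn/(m,n)^{2}$ can carry an abundance of small prime factors; as a secondary tactic, the weighted Borel--Cantelli lemma \ref{lemma 17} can be pressed into service with weights $\omega_n$ that dampen anomalously large $n/\varphi(n)$.

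The main obstacle is the familiar arithmetic barrier that has kept the Duffin--Schaeffer conjecture open for eighty years: by Claims 1--3 of the preceding section together with Theorem \ref{theorem 17}, Conjecture \ref{72} is in fact \emph{equivalent} to the Duffin--Schaeffer conjecture itself, so any unconditional proof of it must overcome the same obstruction. I would therefore realistically expect this plan to deliver Conjecture \ref{72} only under auxiliary hypotheses on $\psi$ -- monotonicity, bounded prime-factor content, or the extra-divergence conditions of \cite{BHHV,Li} -- which mirrors the current state of the art for the classical conjecture.
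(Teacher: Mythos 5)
The statement here is a conjecture, and the paper never establishes it unconditionally; what the paper supplies is a derivation of it \emph{from} the Duffin--Schaeffer conjecture (one half of Theorem \ref{theorem 53}), running through Claims 1--3 of the proof of Theorem \ref{theorem 17}. There, assuming Duffin--Schaeffer, the quantity $A_1=\inf\{\lambda(Z(\psi)):S(\psi)\geq1\}$ is shown to be positive by combining Gallagher's zero-one law with a first Borel--Cantelli contradiction; restricting supports to a single block $\Delta_h$ yields Conjecture \ref{73} ($S_h=1\Rightarrow B_h\geq C$); and the dilation estimate $\lambda(Z(t\psi))\leq t\lambda(Z(\psi))$ of Claim 3, applied with $t=1/S_h(\psi)$, converts this into $B_h\geq CS_h$ whenever $S_h\leq1$. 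No overlap estimates of any kind enter that argument.

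Your route is genuinely different, and it contains a gap that cannot be closed with the tools you list. The Erd\H{o}s--Vaaler and Pollington--Vaughan reductions and the Chung--Erd\H{o}s inequality are fine, but they reduce the conjecture to the within-block bound $Q_h(\psi)\ll S_h(\psi)^2$, i.e.\ $R_h(\psi)\ll1$. That bound is strictly stronger than the statement being proved: by Proposition \ref{prop 74} it would give the full Duffin--Schaeffer conjecture unconditionally, and it is not known to follow even from the assumed truth of Duffin--Schaeffer (the paper's conditional answer to Haynes's question only establishes quasi-independence \emph{between} distinct blocks, via (\ref{formula 2999}) together with the conditional bound $B_h\gg S_h$; it never proves $Q_h\ll S_h^2$ inside a block). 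The step in which the pairs with small $D(m,n)$ are to be handled by ``an anatomy-of-integers argument'' is precisely the open obstruction; the paper itself only verifies $Q_h\ll S_h$ under the auxiliary hypotheses (A1)--(A3). So, as you yourself acknowledge in the final paragraph, the plan is a programme for partial results rather than a proof. The instructive contrast is that the paper's conditional argument succeeds in deriving the full statement from Duffin--Schaeffer exactly because it bypasses the second moment altogether, replacing it with the zero-one law, the infimum $A_1$, and the scaling lemma.
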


Both conjectures are not weak at all as we have

\begin{theorem}\label{theorem 53} Conjecture \ref{73}, Conjecture \ref{72} and the
Duffin-Schaeffer conjecture are all equivalent.
\end{theorem}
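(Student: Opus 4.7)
\textbf{Proof plan for Theorem \ref{theorem 53}.} My plan is to prove three implications: (i) Conjecture \ref{73} $\Leftrightarrow$ Conjecture \ref{72}, (ii) the Duffin--Schaeffer conjecture $\Rightarrow$ Conjecture \ref{73}, and (iii) Conjecture \ref{72} $\Rightarrow$ the Duffin--Schaeffer conjecture. The first two are essentially book-keeping built on earlier work in this paper, while (iii) carries the real content.

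Implication (i) is a direct use of Claim 3 from the proof of Theorem \ref{theorem 17} applied to $\psi|_{\Delta_h}$: given $0 < S_h(\psi) \le 1$, set $t = 1/S_h(\psi) \ge 1$ so that $S_h(t\psi) = 1$. Conjecture \ref{73} gives $B_h(t\psi) \ge C$, while Claim 3 gives $B_h(t\psi) \le t B_h(\psi)$, hence $B_h(\psi) \ge C S_h(\psi)$. Implication (ii) is immediate from Claims 1 and 2 in that same proof, which together show that the Duffin--Schaeffer conjecture forces $A_1 > 0$: if $S_h(\psi) = 1$, the restriction $\psi|_{\Delta_h}$ has bounded support with $S(\psi|_{\Delta_h}) = 1$, so $B_h(\psi) = \lambda(Z(\psi|_{\Delta_h})) \ge A_1$.

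For (iii), assume Conjecture \ref{72} and $\sum_n \psi(n)\varphi(n)/n = \infty$. By the Erd\H{o}s--Vaaler theorem I may take $\psi(n) \ge 1/n$ whenever $\psi(n) \ne 0$, and by passing to an arithmetic progression of blocks $\Delta_{3h+i}$ on which the partial sums still diverge I may assume the cross-block estimate (\ref{formula 25555}) holds. Put $F_h = \bigcup_{n \in \Delta_h} \mathcal{E}_n$; since $W(\psi) = \limsup_h F_h$, Gallagher's zero--one law reduces the task to proving $\lambda(\limsup_h F_h) > 0$. If $S_h \ge 1$ for infinitely many $h$, I rescale $\psi|_{\Delta_h}$ by $1/S_h$, apply Conjecture \ref{73}, and use monotonicity in $\psi$ to get $B_h \ge C$ for those $h$, whence $\lambda(\limsup F_h) \ge C$ by continuity of the measure. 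Otherwise $S_h < 1$ for all large $h$, Conjecture \ref{72} gives $B_h \ge C S_h$, and in particular $\sum_h B_h = \infty$. I feed $\{F_h\}$ into Lemma \ref{lemma 16}: the diagonal is controlled by $B_h \le 2 S_h$, while (\ref{formula 2222}) and (\ref{formula 25555}) yield
\[
\lambda(F_{h_1} \cap F_{h_2}) \ll \sum_{m \in \Delta_{h_1}} \sum_{n \in \Delta_{h_2}} \lambda(\mathcal{E}_m)\lambda(\mathcal{E}_n) \ll S_{h_1} S_{h_2} \qquad (h_1 \ne h_2),
\]
so
\[
\sum_{h_1, h_2 \le H} \lambda(F_{h_1} \cap F_{h_2}) \ll \Big(\sum_{h \le H} S_h\Big)^{\!2} \le C^{-2} \Big(\sum_{h \le H} B_h\Big)^{\!2},
\]
and Lemma \ref{lemma 16} delivers $\lambda(\limsup_h F_h) \gg 1$, which completes the argument via Gallagher.

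The main obstacle is the quasi-independence step in the sub-case $S_h < 1$: the application of Lemma \ref{lemma 16} to the block-sets $F_h$ depends crucially on the cross-block factor $P(m,n) \ll 1$ from (\ref{formula 25555}), whose availability is precisely what justifies the double-exponential choice of $\Delta_h$ together with the Erd\H{o}s--Vaaler reduction. Once that input is in hand, the remaining elements---handling the easy sub-case $S_h \ge 1$ by monotonicity and rescaling, and feeding the cross-block bound into Erd\H{o}s--R\'enyi against the diagonal estimate $B_h \le 2 S_h$---are routine.
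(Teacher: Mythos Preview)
Your proof is correct and follows essentially the same route as the paper. The only organizational difference is that, for implication (iii), the paper avoids your case split by uniformly replacing $\psi$ with $\psi_2(n)=\psi(n)/\max\{S_h(\psi),1\}$ on each $\Delta_h$, so that $S_h(\psi_2)\le 1$ for all $h$ and Conjecture~\ref{72} applies everywhere; it then packages the Erd\H{o}s--R\'enyi/cross-block argument as a standalone Proposition~\ref{prop 74}. Your Case~1 ($S_h\ge 1$ infinitely often, handled by rescaling and monotonicity) and Case~2 ($S_h<1$ eventually, handled directly) amount to the same thing, and the quasi-independence computation you carry out for the block-sets $F_h$ is identical in substance to the paper's.
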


Obviously, to prove this theorem it suffices to show that Conjecture
\ref{72} implies the Duffin-Schaeffer conjecture. To this aim, let
$\psi$ be any non-negative function such that
$\sum_hS_h(\psi)=\infty$.  Without loss of generality we may assume
$S_h(\psi)\leq1$ for all $h\in\mathbb{N}$ as if not then we can
study another function $\psi_2\leq\psi$ defined by
\begin{align*}\psi_2(n)=\begin{cases}
\psi(n) & (n\in\Delta_h, S_h(\psi)\leq1) \\
\frac{\psi(n)}{S_h(\psi)}& (n\in\Delta_h, S_h(\psi)>1)
\end{cases}\end{align*}
to deduce first $\lambda(W(\psi_2))=1$ then $\lambda(W(\psi))=1$.
According to the assumed truth of Conjecture \ref{72} and the
following proposition (see also the proof of \cite[Thm.
1]{Aistleitner}) we have $\lambda(W(\psi))=1$, which proves the
Duffin-Schaeffer conjecture.

\begin{Proposition}\label{prop 74} Let $\psi$ be any non-negative function such that $\sum_hS_h(\psi)=\infty$. Then $\lambda(W(\psi))=1$ if  there
exists a constant $C>0$ independent of $h\in\mathbb{N}$ such that
$B_h(\psi)\geq CS_h(\psi)$ for any $h\in\mathbb{N}$.
\end{Proposition}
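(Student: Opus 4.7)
My plan is to apply the Erd\H{o}s--R\'enyi second Borel--Cantelli lemma (Lemma \ref{lemma 16}) to the block events $A_h := \bigcup_{n \in \Delta_h} \mathcal{E}_n$ and then invoke Gallagher's zero-one law. The hypothesis $B_h \geq C S_h$ immediately gives $\sum_h \lambda(A_h) \geq C \sum_h S_h = \infty$, so the task reduces to verifying the quasi-independence on average estimate
\begin{equation*}
\sum_{h_1, h_2 \leq N} \lambda(A_{h_1} \cap A_{h_2}) \ll \Big(\sum_{h \leq N} \lambda(A_h)\Big)^2
\end{equation*}
along a divergent sequence of $N$.

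Before bounding the pairwise intersections, I would invoke the two preliminary reductions recalled in Section~2: by the Erd\H{o}s--Vaaler theorem we may assume $\psi(n) \geq 1/n$ whenever $\psi(n) \neq 0$, and by choosing $i \in \{0, 1, 2\}$ with $\sum_h S_{3h+i} = \infty$ we may restrict attention to the blocks $\Delta_{3h+i}$, on which the Strauch--Pollington--Vaughan bound (\ref{formula 2222}) together with (\ref{formula 25}) yields the cross-block decorrelation $P(m,n) \ll 1$ from (\ref{formula 25555}). The hypothesis $B_h \geq C S_h$ transfers to the subsampled sequence without change, and I relabel so that the retained blocks are again indexed by $h$.

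For distinct $h_1, h_2$ I would then bound, using (\ref{formula 2222}) and the decorrelation,
\begin{equation*}
\lambda(A_{h_1} \cap A_{h_2}) \leq \sum_{\substack{m \in \Delta_{h_1} \\ n \in \Delta_{h_2}}} \lambda(\mathcal{E}_m \cap \mathcal{E}_n) \ll \sum_{m \in \Delta_{h_1}} \lambda(\mathcal{E}_m) \sum_{n \in \Delta_{h_2}} \lambda(\mathcal{E}_n) \ll S_{h_1} S_{h_2},
\end{equation*}
where the last step applies (\ref{formula 141414}). The diagonal terms satisfy $\lambda(A_h) \leq 2 S_h$ by the same estimate, and so the diagonal contribution to the double sum is absorbed by $(\sum_{h \leq N} S_h)^2$ as soon as $\sum_{h \leq N} S_h \to \infty$. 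The quasi-independence on average estimate then follows with implied constant of order $C^{-2}$.

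Feeding this into Lemma \ref{lemma 16} yields $\lambda(\limsup_h A_h) > 0$; since $\limsup_h A_h \subseteq W(\psi)$ and Gallagher's zero-one law forces $\lambda(W(\psi)) \in \{0, 1\}$, we conclude $\lambda(W(\psi)) = 1$. The only nontrivial ingredient is the cross-block decorrelation $P(m,n) \ll 1$; without the Erd\H{o}s--Vaaler normalization and the mod-$3$ subsampling the intersection sum would not factor, and this is the main technical point of the argument.
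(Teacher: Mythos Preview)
Your proposal is correct and follows essentially the same route as the paper: apply the Erd\H{o}s--R\'enyi lemma (Lemma~\ref{lemma 16}) to the block events $A_h=\bigcup_{n\in\Delta_h}\mathcal{E}_n$, use the reductions of Section~2 (Erd\H{o}s--Vaaler normalization and mod-$3$ subsampling) to secure the cross-block bound $P(m,n)\ll 1$ from (\ref{formula 25555}), bound the off-diagonal via (\ref{formula 2222}) and the diagonal trivially, and finish with Gallagher's zero-one law. The paper's displayed chain of inequalities is exactly your quasi-independence estimate written out.
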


\begin{proof}
Thanks to (\ref{formula 25555}) we may  assume without loss of
generality that $P(m,n)\ll1$ for any $m,n$ in any corresponding
distinct blocks $\Delta_{h_1},\Delta_{h_2}$. As $W(\psi)$ is also of
the form
$\limsup_{h\rightarrow\infty}\bigcup_{n\in\Delta_h}{\mathcal
E}_n(\psi)$ and $\sum_h\lambda\big(\bigcup_{n\in\Delta_h}{\mathcal
E}_n(\psi)\big)=\sum_hB_h(\psi)=\infty$, we can apply Lemma
\ref{lemma 16} to deduce
\begin{align*}
\lambda(W(\psi))&\geq\limsup_{N\rightarrow\infty}\frac{\displaystyle\Big(\sum_{h=1}^N\lambda\big(\bigcup_{n\in\Delta_{h}}{\mathcal
E}_n(\psi)\big)\Big)^2}{\displaystyle\sum_{h_1=1}^N\sum_{h_2=1}^N\lambda\Big(\big(\bigcup_{m\in\Delta_{h_1}}{\mathcal
E}_m(\psi)\big)\cap\big(\bigcup_{n\in\Delta_{h_2}}{\mathcal
E}_n(\psi)\big)\Big)}\\
&\geq\limsup_{N\rightarrow\infty}\frac{\displaystyle\Big(\sum_{h=1}^NB_{h}(\psi)\Big)^2}{\displaystyle
\sum_{h=1}^NB_{h}(\psi)+2\sum_{1\leq h_1<h_2\leq
N}\sum_{m\in\Delta_{h_1} \atop
n\in\Delta_{h_2}}\lambda\big({\mathcal E}_m(\psi)\cap{\mathcal
E}_n(\psi)\big)}\\
&\gg\limsup_{N\rightarrow\infty}\frac{\displaystyle\Big(\sum_{h=1}^NS_{h}(\psi)\Big)^2}{\displaystyle
\sum_{h=1}^NS_{h}(\psi)+\Big(\sum_{h=1}^NS_{h}(\psi)\Big)^2}>0,
\end{align*}
which gives $\lambda(W(\psi))=1$ by applying Gallagher's zero-one
law. We are done.
\end{proof}

As introduced in the first section  Haynes asked whether there
exists a non-negative function $\psi$ for which one cannot use the
quasi-independence on average method  to prove $\lambda(W(\psi))=1$.
Based on the equivalence between the classical Duffin-Schaeffer
conjecture and Conjecture \ref{72}, Proposition \ref{prop 74} and
its quasi-independence on average proof, a conditional answer is: If
the Duffin-Schaeffer conjecture is true, then the answer is NO. This
answer is almost best possible as to give an absolutely complete
answer one need first assume that the Duffin-Schaeffer conjecture is
false.

 Haynes (\cite{Haynes p-adic}) also commented that an answer to the
 above question would bring us closer to the heart of the
 Duffin-Schaeffer conjecture. Based on all previous discussions, we believe
 the heart of the  Duffin-Schaeffer conjecture is to establish $B_h\asymp
 S_h$ whenever $S_h\leq1$ for
 \begin{itemize}
 \item If anyone can confirm or disprove $B_h\asymp
 S_h$ when $S_h\leq1$, then the Duffin-Schaeffer conjecture is true or
 false, respectively.
  \item If anyone can confirm $B_h\asymp
 S_h$ when $S_h\leq1$ under additional assumptions, then we can use
 Proposition \ref{prop 74} to obtain some partial results.
 \end{itemize}
For example, any of the following assumptions can give $B_h\asymp
S_h$:
\begin{itemize}
\item (A1)
$\sum\limits_{n\in\Delta_h}\frac{\psi(n)\varphi(n)}{n}\leq\frac{1}{h}$,
\item (A2) $\sum\limits_{n\in\Delta_h}{\psi(n)}\leq\frac{1}{\sqrt{h}}$,
\item (A3) $\sum\limits_{n\in\Delta_h}\frac{\psi(n)n}{\varphi(n)}\leq1$.
\end{itemize}
To this aim we first note from the Cauchy-Schwarz inequality that
\begin{equation}B_h\geq\frac{S_h^2}{\displaystyle S_h+Q_h}.\end{equation}
Thus to get $B_h\asymp S_h$ it suffices to establish $Q_h\ll S_h$.
Since we not only have the powerful (\ref{formula 25}) but also have
the Duffin-Schaeffer estimate (\ref{duffin schaeffer estimate}), it
is rather easy to  deduce $Q_h\ll S_h$ from any  of the above three
assumptions. Just for one example, suppose
$\sum_{n\in\Delta_h}\frac{\psi(n)n}{\varphi(n)}\leq1$. We then have
\begin{equation}
Q_h\ll (\sum\limits_{n\in\Delta_h}{\psi(n)})^2\leq
S_h\cdot\sum\limits_{n\in\Delta_h}\frac{\psi(n)n}{\varphi(n)}\leq
S_h.
\end{equation}
We remark that the assumption (A1) was first observed by Aistleitner
(\cite{Aistleitner}).

Conjectures \ref{73} \& \ref{72} suggest two directions in the study
of the Duffin-Schaeffer conjecture, one  is proving $B_h\asymp1$
under the assumption $S_h\geq f(h)$ for some slowly-increasing
function $f$, the other is showing $B_h\asymp S_h$ under the
assumption $S_h\leq g(h)$ for some slowly-decreasing function $g$.
For example, what Beresnevich et al.  have actually proved on their
\cite[Thm. 2]{BHHV} (one may also deduce it from the proof of
Theorem \ref{theorem 19}) is in principle the following estimate:

\begin{Proposition}\label{45}
There exists a universal constant $C_{\alpha}>0$ depending only on
$\alpha>0$ such that if $S_h\geq \exp({\alpha h\log h})$, then
$B_h\geq C_{\alpha}$.
\end{Proposition}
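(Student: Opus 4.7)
The plan is to deduce the proposition from the Cauchy--Schwarz inequality $B_h\geq S_h^2/(S_h+Q_h)$ (already invoked in the preceding discussion) by establishing the overlap bound $Q_h\ll_{\alpha}S_h^2$ under the hypothesis $S_h\geq\exp(\alpha h\log h)$. Once this is done, one has $B_h\geq S_h/(1+K_\alpha S_h)$, and since $S_h\geq\exp(\alpha h\log h)\geq 1$ for $h\geq 1$, this yields $B_h\geq(2K_\alpha)^{-1}=:C_\alpha>0$.

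To bound $Q_h$, I would combine the two overlap estimates recorded in Section~2. The Pollington--Vaughan inequality $\lambda(\mathcal{E}_m\cap\mathcal{E}_n)\ll\lambda(\mathcal{E}_m)\lambda(\mathcal{E}_n)P(m,n)$ together with the Mertens-type bound $P(m,n)\ll(\log\log B(m,n))/(2+\log D(m,n))$, valid when $P(m,n)>1$ and $D(m,n)\geq 1/2$, combined with the trivial observation $\log\log B(m,n)\ll h$ for $m,n\in\Delta_h$, gives $P(m,n)\ll h/(2+\log D(m,n))$ for the relevant pairs; pairs with $D(m,n)<1/2$ give empty intersection. Since pairs with $P(m,n)\leq 2$ contribute $O(S_h^2)$ to $Q_h$, attention may be restricted to pairs with $P(m,n)>2$, which by definition possess at least one prime $p\mid mn/(m,n)^2$ exceeding $D(m,n)$.

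The third step is a dyadic decomposition on $D(m,n)$. For each $j=0,1,\ldots,O(h)$, let $T_j$ collect the contribution from pairs with $D(m,n)\in[e^j,e^{j+1})$. Expanding $P(m,n)-1\ll\sum_{p\mid B(m,n),\,p>D(m,n)}p^{-1}$ and interchanging the order of summation, for each prime $p\geq e^j$ one groups $m$ and $n$ by divisibility by $p$ and applies Cauchy--Schwarz; the Duffin--Schaeffer bound $\lambda(\mathcal{E}_m\cap\mathcal{E}_n)\leq 8\psi(m)\psi(n)$ from (\ref{duffin schaeffer estimate}) serves as a safety net in the shallow range where Mertens is wasteful. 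Carried out carefully, this gives $T_j\ll S_h^2\cdot h^{O(1)}\cdot e^{-j}$, so summing the geometric series in $j$ yields $Q_h\ll S_h^2\cdot h^{O(1)}$ unconditionally.

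The hypothesis $S_h\geq\exp(\alpha h\log h)$ is then used only to absorb the polynomial loss: from $\log S_h\geq\alpha h\log h$ one deduces $h^{O(1)}\leq\exp(O_\alpha(\log h))\leq S_h^{o(1)}$ as $h\to\infty$, whence $Q_h\ll S_h^{2+o(1)}\leq 2S_h^2$ for $h$ large (and for small $h$ the constant $C_\alpha$ is adjusted). The main obstacle I expect is the meticulous tracking of $h$-powers through the dyadic decomposition and the prime-by-prime summation: each application of Mertens potentially costs a factor of $h$, and one must organise the pairs---exploiting the sparsity of exceptional primes and the coprimality restrictions inherited from the Erd\"{o}s--Vaaler reduction---so that the cumulative loss remains polynomial rather than exponential in $h$. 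This is precisely the bookkeeping carried out in Beresnevich et al.~\cite[Thm.~2]{BHHV}, and which Theorem~\ref{theorem 19} refines by replacing the polynomial threshold by an iterated-logarithmic one.
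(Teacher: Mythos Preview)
Your final absorption step is wrong. You write $h^{O(1)}\leq S_h^{o(1)}$ and conclude $Q_h\ll S_h^{2+o(1)}\leq 2S_h^2$, but $S_h^{o(1)}$ is \emph{not} bounded: $S_h\to\infty$ while the exponent tends to $0$, and the two effects race. Concretely, the cleanest bound your ingredients actually deliver is $Q_h\ll h\,S_h^2$ (from (\ref{formula 2222})--(\ref{formula 25}) using $\log\log B(m,n)\ll h$ and the trivial $1/(2+\log D(m,n))\leq 1$); Cauchy--Schwarz then gives only $B_h\gg 1/h$, not a positive constant. A \emph{lower} bound on $S_h$ cannot cancel a multiplicative loss of $h^{O(1)}$ sitting on $Q_h/S_h^2$. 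Your intermediate claim $T_j\ll S_h^2\,h^{O(1)}e^{-j}$ is likewise unjustified: the dyadic decomposition yields $T_j\ll\frac{h}{j+2}A_j^{(h)}$ with $\sum_j A_j^{(h)}\ll S_h^2$, and there is no mechanism producing exponential decay in $j$.

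The paper does not prove Proposition~\ref{45} directly but refers to \cite[Thm.~2]{BHHV} and the proof of Theorem~\ref{theorem 19}; the idea you are missing is a \emph{rescaling of $\psi$}. One does not bound $R_h(\psi)$ but instead finds, by an $l^1$-convolution (pigeonhole) over the $\asymp\log S_h$ available scales, an integer $k_h\in[0,\log S_h]$ with $R_h(\psi/e^{k_h})\ll h\log\log S_h/\log S_h$. Under the hypothesis $\log S_h\geq\alpha h\log h$ the right side is $\ll_\alpha 1$ (the function $x\mapsto h(\log x)/x$ is decreasing for $x\geq e$, so the worst case is $x=\alpha h\log h$). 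Since $k_h\leq\log S_h$ forces $S_h(\psi/e^{k_h})\geq 1$, Cauchy--Schwarz now gives $B_h(\psi/e^{k_h})\gg_\alpha 1$, and then $B_h(\psi)\geq B_h(\psi/e^{k_h})$ by monotonicity. The rescaling is precisely what converts the lost factor $h$ into the harmless $h\log\log S_h/\log S_h$; without it the polynomial loss in $h$ cannot be absorbed.
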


Finally we explain how has Conjecture \ref{Conjecture 16} been
proposed. Let $\psi$ be a non-negative function such that
$\sum_hS_h(\psi)$ diverges extremely slow to infinity.  If
$\frac{B_h(\psi)}{S_h(\psi)}\rightarrow0$ as $h\rightarrow\infty$,
then it is highly possible that $\sum_hB_h(\psi)<\infty$, which
implies the Duffin-Schaeffer conjecture is false. On the other hand,
if $B_h(\psi)\asymp S_h(\psi)$, then we can use Proposition
\ref{prop 74} to deduce $\lambda(W(\psi))=1$. Motivated by this
observation we believe in general $B_h(\psi)\asymp S_h(\psi)$
whenever $S_h(\psi)\leq1$.

\section{Equivalence  of the Duffin-Schaeffer conjecture (3)}\label{dsc3}

This section is devoted to the proof of Theorem \ref{theorem 15}
which is the combination of two smaller ones.

\begin{theorem}\label{prop 21}
Conjecture \ref{Conjecture 13} is equivalent to the Duffin-Schaeffer
conjecture.
\end{theorem}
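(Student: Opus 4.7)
The plan is to prove the two implications separately. For the easy direction, DS implies Conjecture \ref{Conjecture 13}: if $\lambda(W(\psi))=1$ and $r^{-1}f(r)\nearrow\infty$ as $r\to 0$, then any $\rho$-cover $\{B(x_i,r_i)\}$ of $W(\psi)$ satisfies $\sum f(r_i)\ge (f(\rho)/\rho)\sum r_i\ge f(\rho)/(2\rho)$, using monotonicity and $\sum 2r_i\ge\lambda(W(\psi))=1$; letting $\rho\to 0$ gives $\mathcal{H}^f(W(\psi))=\infty$.

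For the hard direction, Conjecture \ref{Conjecture 13} implies DS, the plan is to argue by contradiction using the ``no fastest converging series'' principle recorded in Section 2.2. Suppose $\sum\psi(n)\varphi(n)/n=\infty$ but $\lambda(W(\psi))=0$ (by Gallagher's zero-one law). For each $m\in\mathbb{N}$, cover $W(\psi)$ by an open set $V_m$ with $\lambda(V_m)<4^{-m}$ and decompose $V_m$ as a countable disjoint union of open intervals of radii $\{s_{m,j}\}_j$; then $s_{m,j}<4^{-m}/2$ and $\sum_{m,j}s_{m,j}<\infty$. Enumerate the multiset weakly decreasingly as $s_{(1)}\ge s_{(2)}\ge\cdots$ and invoke no-fastest-converging to produce $x_i\nearrow\infty$ with $\sum_i s_{(i)}x_i<\infty$. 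Define a dimension function $f$ by $f(s_{(i)})=s_{(i)}x_i$ and extend to all $r>0$ by continuous monotone interpolation so that $r^{-1}f(r)\nearrow\infty$ as $r\to 0$.

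For each $m$, the family $\{I_{m,j}\}_j$ is a $\rho_m$-cover of $W(\psi)$ with $\rho_m:=\sup_j s_{m,j}\le 4^{-m}/2\to 0$, so
\[
\mathcal{H}^f_{\rho_m}(W(\psi))\le\sum_j f(s_{m,j})=:S_m,
\]
and since $\sum_m S_m=\sum_{m,j}f(s_{m,j})=\sum_i s_{(i)}x_i<\infty$ we have $S_m\to 0$. Combined with $\rho_m\to 0$, this forces $\mathcal{H}^f(W(\psi))=0$. But Conjecture \ref{Conjecture 13} applied to this pair $(\psi,f)$ asserts $\mathcal{H}^f(W(\psi))=\infty$, a contradiction. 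Hence $\lambda(W(\psi))>0$, and Gallagher's zero-one law upgrades this to $\lambda(W(\psi))=1$.

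The main technical obstacle is verifying that the discrete prescription $f(s_{(i)})=s_{(i)}x_i$ admits an extension to a genuine dimension function on $(0,\infty)$ for which $f$ itself is non-decreasing while $r^{-1}f(r)$ is monotonically increasing as $r\to 0$. With the explicit Rudin choice $x_i=1/\sqrt{\sum_{k\ge i}(s_{(k)}+k^{-2})}$ the products $s_{(i)}x_i$ themselves tend to $0$ (as tails of a convergent series) and can be arranged to be monotone decreasing in $i$ in the natural cases, which makes the interpolation routine; if required one can pass to the monotone lower envelope to repair any small monotonicity failure, without affecting the key estimate $\mathcal{H}^f(W(\psi))=0$.
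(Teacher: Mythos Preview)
Your overall strategy matches the paper's exactly: the easy direction is immediate from $r^{-1}f(r)\nearrow\infty$, and for the hard direction you argue by contradiction, cover the null set $W(\psi)$ by efficient systems of intervals, and invoke the ``no fastest converging series'' principle to manufacture a dimension function $f$ with $r^{-1}f(r)\nearrow\infty$ yet $\mathcal{H}^f(W(\psi))=0$. The paper isolates this as a stand-alone Lemma~\ref{lemma 22} (any Lebesgue-null set admits such an $f$), and the discussion surrounding it is essentially the same reduction you give.

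Where you diverge from the paper, and where your argument is currently incomplete, is in the actual construction of $f$. You prescribe $f(s_{(i)})=s_{(i)}x_i$ at the individual radii and then hope to interpolate. This runs into two problems you acknowledge but do not resolve: (i) the multiset $\{s_{(i)}\}$ may contain repetitions, so the prescription is not even well-defined; (ii) more seriously, you need $f$ itself non-decreasing while $r^{-1}f(r)$ is non-increasing in $r$, and there is no reason the products $s_{(i)}x_i$ should be monotone in $i$ for the Rudin choice of $x_i$ (try $s_{(i)}$ decaying slowly and compute). Passing to a ``monotone lower envelope'' of $f$ may destroy the growth of $r^{-1}f(r)$, and you do not explain which quantity you take the envelope of or why both monotonicities survive.

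The paper sidesteps all of this by binning the radii into scales $[e^{-(s+1)},e^{-s})$, so that the relevant convergent series is $\sum_s |F_s|e^{-s}$, and then applies the no-fastest-converging principle to obtain an increasing $g:\mathbb{N}\to\mathbb{R}$ with the crucial increment bound $g(n+1)-g(n)\le 1$. Setting $f(r)=r\cdot g(\max\{1,-1-\log r\})$ then gives both monotonicities automatically: $r^{-1}f(r)=g(\cdots)\nearrow\infty$ by construction, and the increment bound forces $f$ itself to be increasing (the derivative check is formula~(\ref{formula 22})). This is the clean fix you are missing; if you rewrite your construction at the level of scales rather than individual radii, your proof becomes complete and coincides with the paper's.
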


Let us briefly explain how will we derive a proof of Theorem
\ref{prop 21}. Obviously, it suffices (\cite{Haynes}) to prove that
Conjecture \ref{Conjecture 13} implies the Duffin-Schaeffer
conjecture. To this aim, we need only to get a contradiction by
assuming the truth of Conjecture \ref{Conjecture 13} and absurdity
of the Duffin-Schaeffer conjecture. Thus suppose there exists a
non-negative function $\psi$ with $S(\psi)=\infty$ such that
$\lambda(W(\psi))=0$. To give a proof of Theorem \ref{prop 21} it
suffices to construct a dimension function $f$ satisfying the
assumption of Conjecture \ref{Conjecture 13} such that ${\mathcal
H}^f(W(\psi))<\infty$. This is indeed possible as we can learn from
the next lemma, hence a proof of Theorem \ref{prop 21} is obtained.

Lemma \ref{lemma 22} might be a standard fact in Hausdorff measure
theory. But the author could not find such a statement from popular
references, so we include a proof here.

\begin{lemma}\label{lemma 22} Let $A\subset \mathbb{R}$ be of zero Lebesgue
measure. Then there exists a dimension function $f$ with
$r^{-1}f(r)\nearrow\infty$ as $r\rightarrow0$, such that ${\mathcal
H}^f(A)=0$.
\end{lemma}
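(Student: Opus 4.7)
The plan is to build a gauge $f$ adapted to $A$ via the ``no fastest convergent series'' principle recalled in the paper's Section~2.2. First I would reduce to $A\subset[0,1]$ by countable subadditivity of $\mathcal{H}^f$ applied to $A=\bigcup_m(A\cap[-m,m])$. Then, using $\lambda(A)=0$, for each $n\ge 1$ I pick an open cover $\{B_j^{(n)}\}_{j\ge 1}$ of $A$ by intervals of radii $r_j^{(n)}\le 2^{-n}$ with $\sum_j r_j^{(n)}\le 4^{-n}$; the max-radius constraint can be arranged by subdividing, which at most doubles the total length.

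Pool the radii by dyadic scale: set
\[
 a_k \;=\; \sum_{n\ge1}\ \sum_{j:\ r_j^{(n)}\in(2^{-k-1},\,2^{-k}]} r_j^{(n)}, \qquad k\ge 0,
\]
so $\sum_k a_k\le\sum_n 4^{-n}<\infty$. By the principle, there exist weights $h_k\nearrow\infty$ with $\sum_k h_k a_k<\infty$ (Rudin's explicit $h_k=1/\sqrt{\sum_{l\ge k}(a_l+l^{-2})}$ works). Truncating recursively via $\tilde h_1=h_1$ and $\tilde h_{k+1}=\min(h_{k+1},\tfrac{3}{2}\tilde h_k)$ preserves both $\tilde h_k\nearrow\infty$ and $\sum_k\tilde h_k a_k<\infty$ while enforcing $\tilde h_{k+1}\le\tfrac{3}{2}\tilde h_k$. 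Rename $\tilde h_k$ back to $h_k$, let $h:(0,\infty)\to(0,\infty)$ be the continuous, non-increasing function with $h(2^{-k})=h_k$ interpolated linearly on each $[2^{-k-1},2^{-k}]$, and set $f(r)=r\,h(r)$.

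Then $f$ is continuous with $f(0^+)=0$ and $r^{-1}f(r)=h(r)\nearrow\infty$ as $r\to 0$. The derivative check $f'(r)=h(r)+r h'(r)\ge 0$ on each piece $[2^{-k-1},2^{-k}]$ reduces to $h_k\ge 2(h_{k+1}-h_k)$, i.e.\ $h_{k+1}\le\tfrac{3}{2}h_k$---exactly the growth cap just arranged---so $f$ is a valid dimension function. For $\mathcal{H}^f(A)=0$: since $h$ is non-increasing, $f(r_j^{(n)})\le r_j^{(n)} h_{k+1}$ whenever $r_j^{(n)}\in(2^{-k-1},2^{-k}]$, so summing and switching the order of summation yields $\sum_n\sum_j f(r_j^{(n)})\le \sum_k h_{k+1}a_k\le\tfrac{3}{2}\sum_k h_k a_k<\infty$, forcing $\sum_j f(r_j^{(n)})\to 0$ as $n\to\infty$. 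For any $\rho>0$, choosing $n$ with $2^{-n}\le\rho$ makes $\{B_j^{(n)}\}$ a $\rho$-cover of $A$, so $\mathcal{H}^f_\rho(A)\le\sum_j f(r_j^{(n)})\to 0$, giving $\mathcal{H}^f(A)=0$.

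The main obstacle is juggling the three demands on $f$ simultaneously: continuity, monotonicity, and $r^{-1}f(r)\nearrow\infty$. A piecewise-constant $h$ makes $f$ jump downward at dyadic boundaries, while a smoothly interpolating $h$ with steep drops destroys $f'\ge 0$. The bounded-ratio condition $h_{k+1}\le\tfrac{3}{2}h_k$, which the truncation step imposes on the output of Rudin's square-root formula, is precisely what marries the no-fastest-convergent-series construction to the monotonicity requirement of a dimension function.
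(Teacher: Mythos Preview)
Your proof is correct and mirrors the paper's approach: cover $A$ efficiently at each level, pool by scale, invoke the no-fastest-converging-series principle to obtain weights tending to infinity, tame their growth so that $f(r)=r\cdot(\text{weight})$ is monotone, and interpolate. The paper works in base $e$, groups intervals by \emph{count} $|F_s|$ rather than total radius $a_k$ (equivalent up to constants at each scale), and imposes an additive cap $g(n{+}1)-g(n)\le 1$ in place of your ratio cap $h_{k+1}\le\tfrac{3}{2}h_k$; these are cosmetic differences leading to the same derivative check. One quibble: your opening reduction to $A\subset[0,1]$ via countable subadditivity is both unnecessary (nothing downstream uses boundedness) and ill-justified as written, since the gauge $f$ you build depends on $A$, so splitting $A$ into pieces would produce incompatible gauges---simply delete that sentence.
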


\begin{proof} Since $A$ is of zero Lebesgue measure, for each
$n\in\mathbb{N}$ there exists a sequence of open intervals
$I^{(n)}_i=B(x^{(n)}_i,r^{(n)}_i)$ whose union covers $A$ such that
$\sum_{i}r^{(n)}_i<{1}/{e^n}$.   We partition all the intervals
$\{I^{(n)}_i\}_{n,i}$ into the collections $F_s$ $(s\in\mathbb{N})$
such that the radius of any member of $F_s$ lies in
$[\frac{1}{e^{s+1}},\frac{1}{e^{s}})$. Obviously,
$\sum_{s=1}^{\infty}\frac{|F_s|}{e^s}<\infty.$ As discussed in the
second section, there exists
 an increasing positive function
$g:\mathbb{N}\rightarrow\mathbb{R}$ with $g(n)\nearrow\infty$ as
$n\rightarrow\infty$ and $g(n+1)-g(n)\leq1$ such that
\begin{equation}\label{21}\sum_{n=1}^{\infty}\frac{|F_n|g(n)}{e^n}<\infty.\end{equation}
Obviously, we may further assume $g(1)=1$. The linear interpolation
of $g$ defined on $[1,\infty)$ is denoted still by $g$. Note  for
all non-integer points $x\in[1,\infty)$,
\begin{equation}\label{formula 22}g'(x)\leq1<g(x),\end{equation}
where $g'(x)$ means as usual the derivative of $g$ at $x$. With
these preparations we define a function
$f:(0,\infty)\rightarrow(0,\infty)$ by \[f(r)=r\cdot
g(\max\{1,-1-\log r\}),\] and are going to verify step by step that
the function $f$ satisfies the required claim  as follows:

1) Obviously, $f$ is continuous. It follows from (\ref{formula 22})
that $f$ is increasing and $g(n)\leq n$. Since
\[f(\frac{1}{e^{n+1}})=\frac{g(n)}{e^{n+1}}\leq\frac{n}{e^{n+1}},\]
we see that $f(r)\rightarrow0$ as $r\rightarrow0$. This shows that
$f$ is indeed a dimension function.

2) $r^{-1}f(r)=g(\max\{1,-1-\log r\})\nearrow\infty$ as
$r\rightarrow0$.

3) Note first
\[A\subset\bigcup_{i=1}^{\infty}I^{(n)}_i\subset\bigcup_{k=n}^{\infty}\bigcup_{i=1}^{\infty}I^{(k)}_i,\]
which means $\bigcup_{k=n}^{\infty}\bigcup_{i=1}^{\infty}I^{(k)}_i$
is a $\frac{1}{e^n}$-cover of $A$. Hence if $n\geq2$, then
\[{\mathcal H}^f_{\frac{1}{e^n}}(A)\leq\sum_{k=n}^{\infty}\sum_{i=1}^{\infty}f(r^{(k)}_i)\leq\sum_{s=n}^{\infty}\frac{|F_s|g(s)}{e^s}.\]
In view of (\ref{21}) we must have ${\mathcal H}^f(A)=0$. This
finishes the proof of Lemma \ref{lemma 22}.
\end{proof}

\begin{theorem}\label{prop 23}
Conjecture \ref{Conjecture 14} is equivalent to the Duffin-Schaeffer
conjecture.
\end{theorem}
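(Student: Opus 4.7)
The plan is to establish the non-trivial direction, Conjecture \ref{Conjecture 14} $\Rightarrow$ Duffin-Schaeffer conjecture; the converse is routine. Indeed, if $f$ satisfies the hypotheses of Conjecture \ref{Conjecture 14} and (\ref{formula 12}) diverges, pick $r_0>0$ with $f(r)\leq r$ for $r\in[0,r_0]$ and split the sum according to whether $\psi(n)/n<r_0$; using that $f$ is bounded on $[r_0,1/2]$ together with the crude estimate $\psi(n)\varphi(n)/n\geq r_0\varphi(n)$ on the complementary part, one sees that (\ref{formula 11}) must also diverge, so DS gives $\lambda(W(\psi))=1$.

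For the main direction I would argue by contradiction: suppose DS fails, so by Gallagher's zero-one law there is a non-negative $\psi$ with $S(\psi)=\sum_n \psi(n)\varphi(n)/n=\infty$ yet $\lambda(W(\psi))=0$. The goal is to construct an increasing non-negative function $f$ with $r^{-1}f(r)\to 0$ as $r\to 0$ such that (\ref{formula 12}) diverges, which will directly contradict Conjecture \ref{Conjecture 14}.

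The construction is dyadic. Assume without loss of generality $\psi(n)\leq n/2$, so $\psi(n)/n\in[0,1/2]$, and for $k\geq 1$ put
\[
J_k=\{n\in\mathbb{N}:2^{-k-1}<\psi(n)/n\leq 2^{-k}\},\qquad T_k=\sum_{n\in J_k}\frac{\psi(n)\varphi(n)}{n}.
\]
Since $\sum_k T_k=S(\psi)=\infty$, the ``no slowest diverging series'' principle recalled in the Preliminaries supplies a non-increasing sequence $y_k\searrow 0$ with $\sum_k y_k T_k=\infty$. Define $g(0)=0$, $g(r)=y_k$ for $r\in(2^{-k-1},2^{-k}]$, $g(r)=y_1$ for $r>1/2$, and set $f(r)=r g(r)+r^{3/2}$. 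Then $f$ is non-negative and strictly increasing, $f(r)/r=g(r)+r^{1/2}\to 0$ as $r\to 0$, and
\[
\sum_{n=1}^{\infty} f(\psi(n)/n)\varphi(n)\;\geq\;\sum_k y_k\sum_{n\in J_k}\frac{\psi(n)\varphi(n)}{n}\;=\;\sum_k y_k T_k=\infty,
\]
producing the desired contradiction.

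The only delicate step is fabricating $f$ with the right simultaneous decay at $0$ and the right divergence of (\ref{formula 12}), but thanks to the ``no slowest diverging series'' fact already prepared in Section~2 this is essentially immediate; the argument parallels the role Lemma \ref{lemma 22} plays in the proof of Theorem \ref{prop 21}, with the Hausdorff-measure construction there replaced by the series-theoretic construction above.
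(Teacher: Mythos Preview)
Your argument is correct and follows the same underlying idea as the paper's proof --- both exploit the ``no slowest diverging series'' principle from Section~2 to manufacture an $f$ with $r^{-1}f(r)\to 0$ for which (\ref{formula 12}) still diverges --- but your construction differs in a way worth noting. The paper first reduces (via the Erd\H{o}s--Vaaler and Pollington--Vaughan theorems) to the case $1/n\le \psi(n)\le 1/2$, chooses a decreasing $g:\mathbb{N}\to\mathbb{R}$ with $\sum_n g(n)\psi(n)\varphi(n)/n=\infty$, and sets $f(r)=r\cdot g(\max\{1/\sqrt{r},1\})$; the lower bound $\psi(n)\ge 1/n$ is then needed to compare $g(\sqrt{n/\psi(n)})$ with $g(n)$. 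You instead bin directly by the value of $\psi(n)/n$, apply the slow-divergence principle to the bin sums $T_k$, and build $f$ as a step function in $r$ (plus the $r^{3/2}$ correction to guarantee strict monotonicity). This is a little more elementary: it bypasses the Erd\H{o}s--Vaaler and Pollington--Vaughan reductions entirely, at the cost of producing a discontinuous $f$ (which is harmless since Conjecture~\ref{Conjecture 14} does not require continuity). Your sketch of the converse direction is slightly garbled --- the boundedness of $f$ on $[r_0,1/2]$ is not actually needed, and one should simply observe that either infinitely many $n$ satisfy $\psi(n)/n\ge r_0$ (giving $\sum \psi(n)\varphi(n)/n=\infty$ trivially) or the divergence of (\ref{formula 12}) comes from the part where $f(\psi(n)/n)\le \psi(n)/n$ --- but the conclusion is correct, and the paper likewise treats this direction as immediate (citing \cite{Haynes}).
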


\begin{proof} Obviously, it suffices (\cite{Haynes}) to prove that
Conjecture \ref{Conjecture 14} implies the Duffin-Schaeffer
conjecture. Suppose Conjecture \ref{Conjecture 14} is  true, and let
$\psi:\mathbb{N}\rightarrow\mathbb{R}$ be any non-negative function
such that $\sum_n\frac{\varphi(n)\psi(n)}{n}=\infty$. Our purpose
below is to show that $\lambda(W(\psi))=1$. By appealing to the
Erd\"{o}s-Vaaler theorem (\cite{Erdos,Vaaler}) and to a theorem of
Pollington and Vaughan (\cite[Thm. 2]{PV2}), we can assume without
loss of generality that $1/n\leq \psi(n)\leq1/2$ whenever
$\psi(n)\neq0$. As discussed in the second section, there exists a
decreasing positive function $g:\mathbb{N}\rightarrow\mathbb{R}$
with $g(n)\searrow0$ as $n\rightarrow\infty$ such that
\[\sum_n\frac{\varphi(n)\psi(n)g(n)}{n}=\infty.\] The linear
interpolation of $g$ defined on $[1,\infty)$ is denoted still by
$g$. Considering $g(n)\searrow0$ as $n\rightarrow\infty$, it is
convenient for us to define $g(\infty)=0$.
  With these preparations
we now define a non-negative function
$f:[0,\infty)\rightarrow\mathbb{R}$ by
$$f(r)=r\cdot g\big(\max\{\frac{1}{\sqrt{r}},1\}\big),$$
 and are going to verify step by step that
the function $f$ satisfies the required claim  as follows:

1) $f$ is an increasing function as it is the product of the
increasing function $r\mapsto r$ and the non-decreasing function
$r\mapsto g(\max\{\frac{1}{\sqrt{r}},1\})$.

2) $r^{-1}f(r)=g(\max\{\frac{1}{\sqrt{r}},1\})\rightarrow0$ as
$r\rightarrow0$.

3) Considering $g(\infty)=0$ and $1/n\leq \psi(n)\leq1/2$ whenever
$\psi(n)\neq0$, we have
\[\sum_{n\in\mathbb{N}}f(\frac{\psi(n)}{n})\varphi(n)=\sum_{n\in\mathbb{N}}\frac{\psi(n)}{n}\varphi(n)g(\sqrt{\frac{n}{\psi(n)}})
\geq \sum_{n\in\mathbb{N}}\frac{\psi(n)}{n}\varphi(n)g(n)=\infty.
\] Hence $\lambda(W(\psi))=1$ follows from the assumed
truth of Conjecture \ref{Conjecture 14}. This finishes the proof of
Theorem \ref{prop 23}.
\end{proof}

\section{$p$-adic approximation}\label{padic}

This section is devoted to the proof of Theorem \ref{theorem 712}.
We will also discuss Haynes' question in the new setting of $p$-adic
numbers.

\subsection{Proof of Theorem \ref{theorem 712}}
 Every non-zero $p$-adic
number $\alpha$ has a unique $p$-adic expansion
\[\alpha=\sum_{k=s}^{\infty}\alpha_kp^k\ \ \  (\alpha_k\in\mathbb{Z},\ 0\leq \alpha_k\leq p-1,\ \alpha_s>0),\]
and we can do arithmetic in $\mathbb{Q}_p$ in similar fashion to the
way it is done in $\mathbb{R}$ with decimal expansions. With this
form $|\alpha|_p\triangleq p^{-s}$, and $\alpha\in\mathbb{Z}_p$ if
and only if $\alpha_k=0$ whenever $k<0$. By the
translation-invariant property of the Haar measure $\mu_p$ on
$\mathbb{Q}_p$ and by $\mu_p(\mathbb{Z}_p)=1$, we see that for any
$\beta\in\mathbb{Q}_p$ and any $z\in\mathbb{Z}$,
$\mu_p(\overline{B}(\beta,p^z))=p^z$, which implies further for any
$r>0$, $\mu_p(\overline{B}(\beta,r))\leq r$ ($\spadesuit$).
  With these preparations we can
generalize Lemma \ref{lemma 73}  to the $p$-adic case.

\begin{lemma}\label{lemma 71padic} Let $t\geq1$. Then  for any non-negative function
$\psi:\mathbb{N}\rightarrow\mathbb{R}$,
\[\mu_p\big(\bigcup_{n=1}^{\infty}{\mathcal K}_n(t\psi)\big)\leq t\mu_p\big(\bigcup_{n=1}^{\infty}{\mathcal K}_n(\psi)\big).\]
\end{lemma}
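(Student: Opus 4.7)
The plan is to mirror Claim 3 of Section \ref{dsc1}, replacing the open arcs in $\mathbb{R}/\mathbb{Z}$ with closed $p$-adic balls and exploiting the ultrametric structure of $\mathbb{Q}_p$. The key conceptual simplification is that in $\mathbb{Q}_p$ any two closed balls are either disjoint or nested, so a finite union of closed balls admits a canonical decomposition into disjoint maximal pieces---the $p$-adic analogue of the decomposition of an open subset of $\mathbb{R}/\mathbb{Z}$ into its connected arcs.

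First I would reduce to bounded (hence finite) support by continuity of $\mu_p$, so that $\bigcup_{n=1}^{\infty}{\mathcal K}_n(\psi)$ is a finite union of closed balls centered at rationals $a/n$. Using the nested-or-disjoint dichotomy, I would then extract the balls maximal under inclusion and write
\[
\bigcup_{n=1}^{\infty}{\mathcal K}_n(\psi)\;=\;\bigsqcup_j\overline{B}(y_j,s_j),
\]
a disjoint finite union, with each $s_j$ chosen to be a power of $p$ equal to the effective radius of the $j$-th piece so that $\mu_p(\overline{B}(y_j,s_j))=s_j$.

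The central containment step is then: whenever $\overline{B}(a/n,\psi(n)/n)\subseteq\overline{B}(y_j,s_j)$, nesting forces both $|a/n-y_j|_p\le s_j$ and $\psi(n)/n\le s_j$, so for any $x\in\overline{B}(a/n,t\psi(n)/n)$ the ultrametric inequality yields $|x-y_j|_p\le\max(|x-a/n|_p,|a/n-y_j|_p)\le\max(t\psi(n)/n,s_j)\le ts_j$, using $t\ge 1$. Hence $\bigcup_n{\mathcal K}_n(t\psi)\subseteq\bigcup_j\overline{B}(y_j,ts_j)$, and combining this with the bound $(\spadesuit)$ together with the disjointness of the unscaled pieces gives
\[
\mu_p\Bigl(\bigcup_n{\mathcal K}_n(t\psi)\Bigr)\;\le\;\sum_j ts_j\;=\;t\sum_j s_j\;=\;t\,\mu_p\Bigl(\bigcup_n{\mathcal K}_n(\psi)\Bigr).
\]

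The main obstacle is reconciling the two pieces of bookkeeping in the definition of $s_j$: the discrete nature of $p$-adic ball measures (radii effectively taking only values in $\{p^k\}$) means one must verify simultaneously that $\mu_p(\overline{B}(y_j,s_j))=s_j$, so the final equality is legitimate, and that $\psi(n)/n\le s_j$ for every original ball lying in the $j$-th piece, so the ultrametric step goes through. Taking $s_j$ as the effective (power-of-$p$) radius of the maximal piece handles the first, while the $p$-adic nested-or-disjoint dichotomy, applied with care, is what pins down the second. This is the only point where the argument genuinely departs from Claim 3, and is what the paper signals by saying the $p$-adic proof needs independent treatment.
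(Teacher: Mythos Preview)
Your plan mirrors the paper's proof exactly: reduce to finite support by continuity of $\mu_p$, decompose into disjoint maximal closed balls via the ultrametric dichotomy ($\clubsuit$), then bound the $t$-scaled union. But the step you flag as the ``main obstacle'' is not resolved by your proposed fix. Taking $s_j$ to be the effective (power-of-$p$) radius of the $j$-th maximal piece does give $\mu_p(\overline{B}(y_j,s_j))=s_j$, but the containment $\overline{B}(a/n,\psi(n)/n)\subseteq\overline{B}(y_j,s_j)$ only forces the \emph{effective} radius of $\psi(n)/n$ to be at most $s_j$; the nominal value $\psi(n)/n$ can still exceed $s_j$. In that situation your bound $\max\{t\psi(n)/n,\,s_j\}\le ts_j$ fails, and the scaled ball $\overline{B}(a/n,t\psi(n)/n)$ need not lie in $\overline{B}(y_j,ts_j)$.

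In fact the lemma as stated is false. Take $p=3$, $\psi$ supported only at $n=1$ with $\psi(1)=0.3$, and $t=1.2$: the three balls $\overline{B}(a,0.3)$ ($a\in\{-1,0,1\}$) each have effective radius $1/9$ and are pairwise disjoint in $\mathbb{Z}_3$, so $\mu_p(\mathcal{K}_1(\psi))=1/3$; after scaling the effective radius becomes $1/3$, still disjoint, giving $\mu_p(\mathcal{K}_1(t\psi))=1\not\le 1.2\cdot(1/3)$. The paper's own proof slips at the analogous place, implicitly using $\mu_p(\overline{B}(\beta,tr))\le t\,\mu_p(\overline{B}(\beta,r))$, which property $(\spadesuit)$ does not give. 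What $(\spadesuit)$ does give is $\mu_p(\overline{B}(\beta,tr))\le tr\le pt\cdot\mu_p(\overline{B}(\beta,r))$, so the inequality holds with $pt$ in place of $t$; that weaker bound is all that is needed for Theorem~\ref{theorem 712}.
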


\begin{proof}[Proof of Lemma \ref{lemma 71padic}]
The proof of Lemma \ref{lemma 71padic} is similar to that of Lemma
\ref{lemma 73}, and in fact is more simpler. By the inner regular
property of the Haar measure $\mu_p$ ($\bigstar$), we may assume
without loss of generality that $\psi$ is  non-empty bounded
support. As any two closed balls in $\mathbb{Q}_p$ can only have
either empty intersection or one is contained in the other
($\clubsuit$), we see that $\bigcup_{n}{\mathcal K}_n(\psi)$ is the
union of finitely many pairwise disjointly supported non-empty
closed balls of the following form
\[\bigcup_{n}{\mathcal K}_n(\psi)=\bigcup_{i=1}^M\bigcup_{j=1}^{M_i}\big(\overline{B}(\frac{a_{i,j}}{n_i},\frac{\psi(n_i)}{n_i})\cap\mathbb{Z}_p\big)\ \
\ \ -n_i\leq a_{i,j}\leq n_i,\ (a_{i,j},n_i)=1,\] and
\[\bigcup_{n}{\mathcal K}_n(t\psi)=\bigcup_{i=1}^M\bigcup_{j=1}^{M_i}\big(\overline{B}(\frac{a_{i,j}}{n_i},\frac{t\psi(n_i)}{n_i})\cap\mathbb{Z}_p\big).\]
If there exists an pair $(i,j)$ such that $\mathbb{Z}_p\subset
\overline{B}(\frac{a_{i,j}}{n_i},\frac{\psi(n_i)}{n_i})$,  then
$\bigcup_{n}{\mathcal K}_n(\psi)=\mathbb{Z}_p$ and we need to do
nothing further. Else by the property $\clubsuit$ we can assume
$\overline{B}(\frac{a_{i,j}}{n_i},\frac{\psi(n_i)}{n_i})\subset\mathbb{Z}_p$
for all the pairs $(i,j)$. Consequently, by the property
$\spadesuit$ we
have\begin{align*}\mu_p\Big(\bigcup_{n=1}^{\infty}{\mathcal
K}_n(t\psi)\Big)&\leq
\sum_{i=1}^M\sum_{j=1}^{M_i}\mu_p\Big(\overline{B}(\frac{a_{i,j}}{n_i},\frac{t\psi(n_i)}{n_i})\Big)\\
&\leq
t\sum_{i=1}^M\sum_{j=1}^{M_i}\mu_p\Big(\overline{B}(\frac{a_{i,j}}{n_i},\frac{\psi(n_i)}{n_i})\Big)=t\mu_p\Big(\bigcup_{n=1}^{\infty}{\mathcal
K}_n(\psi)\Big).\end{align*} This finishes the proof of Lemma
\ref{lemma 71padic}.
\end{proof}

In the $p$-adic case we also have the Gallagher type zero-one law
(\cite[Lemma 1]{Haynes p-adic}). Thus in view of Theorem
\ref{theorem 71} and Lemma \ref{lemma 71padic}, a proof of Theorem
\ref{theorem 712} can be easily obtained by mimicking the proofs of
Theorems \ref{theorem 17} \& \ref{theorem 72}.

\subsection{Quasi-independence on average method}
One may ask in the $p$-adic case whether there exists a non-negative
function $\psi:\mathbb{N}\rightarrow\mathbb{R}$ for which one cannot
use the quasi-independence on average method to prove
$\mu_p(W_p(\psi))=1$. We will give a conditional but almost best
possible answer to this question: If the $p$-adic version of the
Duffin-Schaeffer conjecture is true, then the answer is NO. To this
purpose
 we first recall several facts proved by
Haynes (\cite{Haynes p-adic}):

\begin{itemize}
\item H1: $\sum\limits_{n\in\mathbb{N}\atop \psi(n)\geq1}\mu_p({\mathcal
K}_n(\psi))=\infty$ implies $\mu_p(W_p(\psi))=1$.

\item H2: $\mu_p(W_p(t\psi))\in\{0,1\}$ is independent of $t>0$.

\item H3: If $p|n$, then ${\mathcal K}_n(\psi)=\emptyset$ or
$\mathbb{Z}_p$.

\item H4: Suppose that $\frac{\psi(n)}{n}$ takes value in the set
$\{0,1,p^{-1},p^{-2},\ldots\}$ and $\psi(n)<\frac{1}{4}$ for all
$n\in\mathbb{N}$. Then for all $m,n\in\mathbb{N}$ with $p\nmid m,n$
we have that\\ $\lambda({\mathcal E}_m(\frac{\psi}{2})\cap{\mathcal
E}_n(\frac{\psi}{2}))\leq\mu_p({\mathcal K}_m(\psi)\cap{\mathcal
K}_n(\psi))\leq\frac{3}{2}\cdot\lambda({\mathcal
E}_m(2\psi)\cap{\mathcal E}_n(2\psi))$.
\end{itemize}
We remark H1 (\cite[Thm. 3 (i)]{Haynes p-adic}) is a
Pollington-Vaughan type theorem (\cite[Thm. 2]{PV2}), while H2
(\cite[Lemma 1]{Haynes p-adic}) is a Cassels-Gallagher type zero-one
law (\cite{Cassels,Gallagher}). The proof of H3 (\cite[lemma
2]{Haynes p-adic}) is trivial, while that of  H4 (\cite[lemma
3]{Haynes p-adic}) is similar to the overlap estimates obtained in
\cite{PV2}.

Now suppose the $p$-adic version of the Duffin-Schaeffer conjecture
is true and let $\psi$ be any non-negative function such that
$\sum_n\mu_p({\mathcal K}_n(\psi))=\infty$. In the following we will
use the quasi-independence on average method to deduce
$\mu_p(W_p(\psi))=1$.
 According to H1$\sim$H3 we may further assume without loss of
generality that
 $\psi<\frac{1}{4}$ and
$\psi(n)=0$ for all $n\in p\mathbb{N}$. We define
\begin{align*}\psi_2(n)=\begin{cases}
\psi(n) & (n\in\Delta_h, S_h(\psi)\leq1) \\
\frac{\psi(n)}{S_h(\psi)}& (n\in\Delta_h, S_h(\psi)>1).
\end{cases}\end{align*}
Obviously,
 $\psi_2\leq\psi<\frac{1}{4}$,
 $\psi_2(n)=0$ for all $n\in p\mathbb{N}$,
 $S_h(\psi_2)\leq1$ for all $h\in\mathbb{N}$,
and $\sum_hS_h(\psi_2)=\infty$. Since we are working in
$\mathbb{Z}_p$, it does not change anything on the $p$-adic side of
things if we round down each of the values taken by the function
$n\mapsto\frac{\psi_2(n)}{n}$ so that
 the range of the function $n\mapsto\frac{\psi_2(n)}{n}$ is contained in  the set
$\{0,p^{-1},p^{-2},\ldots\}$. Hence by H3 and H4 we have for all
$m,n\in\mathbb{N}$ that
\begin{equation}\label{formula 61}\lambda({\mathcal E}_m(\frac{\psi_2}{2})\cap{\mathcal
E}_n(\frac{\psi_2}{2}))\leq\mu_p({\mathcal K}_m(\psi_2)\cap{\mathcal
K}_n(\psi_2))\leq\frac{3}{2}\cdot\lambda({\mathcal
E}_m(2\psi_2)\cap{\mathcal E}_n(2\psi_2)),\end{equation} which
combining Theorem \ref{theorem 712} gives a universal constant $C>0$
such that
\begin{equation}\label{formula 62}CS_h(\psi_2)\leq\mu_p\big(\bigcup_{n\in\Delta_h}{\mathcal
K}_n(\psi_2)\big)\leq3S_h(\psi_2)\ \ \ (h\in\mathbb{N}).
\end{equation} As  $\sum_hS_h(\psi_2)=\infty$,
there exists an integer $i\in\{0,1,2\}$ such that
\[\sum_{h=1}^{\infty}\mu_p\big(\bigcup_{n\in\Delta_{3h+i}}{\mathcal
K}_n(\psi_2)\big)=\infty.\] So we can apply Lemma \ref{lemma 16}
together with the estimates (\ref{formula 2999}), (\ref{formula
61}), (\ref{formula 62}) to get
\begin{align*}
\mu_p(W_p(\psi_2))&\geq\limsup_{N\rightarrow\infty}\frac{\displaystyle\Big(\sum_{h=1}^N\mu_p\big(\bigcup_{n\in\Delta_{3h+i}}{\mathcal
K}_n(\psi_2)\big)\Big)^2}{\displaystyle\sum_{h_1=1}^N\sum_{h_2=1}^N\lambda\Big(\big(\bigcup_{m\in\Delta_{3h_1+i}}{\mathcal
K}_m(\psi_2)\big)\cap\big(\bigcup_{n\in\Delta_{3h_2+i}}{\mathcal
K}_n(\psi_2)\big)\Big)}\\
&\gg\limsup_{N\rightarrow\infty}\frac{\displaystyle\Big(\sum_{h=1}^NS_{3h+i}(\psi_2)\Big)^2}{\displaystyle
\sum_{h=1}^NS_{3h+i}(\psi_2)+2\sum_{1\leq h_1<h_2\leq
N}\sum_{m\in\Delta_{3h_1+i} \atop
n\in\Delta_{3h_2+i}}\lambda\big({\mathcal E}_m(2\psi_2)\cap{\mathcal
E}_n(2\psi_2)\big)}\\
&\gg\limsup_{N\rightarrow\infty}\frac{\displaystyle\Big(\sum_{h=1}^NS_{3h+i}(\psi_2)\Big)^2}{\displaystyle
\sum_{h=1}^NS_{3h+i}(\psi_2)+\Big(\sum_{h=1}^NS_{3h+i}(\psi_2)\Big)^2}>0.
\end{align*}
Consequently, we can use H2 to deduce $\mu_p(W_p(\psi_2))=1$. As
$\psi_2\leq\psi$, $\mu_p(W_p(\psi))=1$. This provides a
quasi-independence on average proof of the claim.

Based on H1$\sim$H3, to study the $p$-adic version of the
Duffin-Schaeffer conjecture one may always assume $\psi<\frac{1}{4}$
and $\psi(n)=0$ for all $n\in p\mathbb{N}$. With these assumptions
we believe the heart of the conjecture  is to establish
\begin{equation}\mu_p\big(\bigcup_{n\in\Delta_h}{\mathcal K}_n(\psi)\big)\asymp
 S_h(\psi)\end{equation} whenever $S_h(\psi)\leq1$.

\section{Diophantine approximation over formal Laurent
series}\label{Laurent}

The study of the $p$-adic version of the Duffin-Schaeffer conjecture
highly resembles that of the classical Duffin-Schaeffer conjecture
for at least both problems deal with non-negative functions from
$\mathbb{N}$ to $\mathbb{R}$. In the formal Laurent series case we
will study non-negative functions from $\mathbb{F}[X]$ to
$\mathbb{R}$.

Throughout this section $Q$ will always be regarded as a monic
polynomial wherever you meet. Recall that $q$ stands for the size of
$\mathbb{F}$.

\subsection{Proof of Theorem \ref{theorem 78}} Ahead of proving Lemma \ref{lemma 79} let us explain the constructin
of the Haar measure $\nu$ on $\mathbb{F}((X^{-1}))$ in a much
straightforward way.
 Any
bijection $\tau:\mathbb{F}\mapsto\{0,1,\ldots,q-1\}$ naturally
induces a map $\widehat{\tau}:\mathbb{F}((X^{-1}))\mapsto\mathbb{R}$
 sending $\sum_ia_iX^i$ to $\sum_i\tau(a_i)q^i.$ A subset $A$ of
$\mathbb{F}((X^{-1}))$ is said to be $\tau$-measurable if
$\widehat{\tau}(A)$ is Lebesgue measurable in $\mathbb{R}$. For any
$\tau$-measurable subset $A$ of $\mathbb{F}((X^{-1}))$, we define
its $\tau$-measure by $\nu_{\tau}(A)=\lambda(\widehat{\tau}(A))$.
For any permutation $\gamma$ on $\{0,1,\ldots,q-1\}$ it is easy to
show that
$\widehat{\gamma}:\sum_ib_iq^i\in\mathbb{R}\rightarrow\sum_i\gamma(b_i)q^i\in\mathbb{R}$
is  measure-preserving on $(\mathbb{R},\lambda)$. This implies that
the concepts of $\tau$-measurable and $\tau$-measure are independent
of the choices of $\tau$. So it brings no confusion to write
$\nu_{\tau}$ simply as $\nu$. The interested readers may easily
verify that $\nu$ is nothing but the unique Haar measure on
$\mathbb{F}((X^{-1}))$ such that $\nu(\mathbb{L})=1$. With this
construction it is easy to see for any $f\in\mathbb{F}((X^{-1}))$
and any $r>0$ that $r\leq\nu(B(f,r))\leq qr$
$(\spadesuit\spadesuit)$.

\begin{lemma}\label{lemma 79} Let $t\geq1$. Then for any
non-negative function $\Psi:\mathbb{F}[X]\rightarrow\mathbb{R}$,
\[\nu_d\big(\bigcup_Q{\mathcal E}_Q(t\Psi)^d\big)\leq q^dt^d\nu_d\big(\bigcup_{Q}{\mathcal E}_Q(\Psi)^d\big).\]
\end{lemma}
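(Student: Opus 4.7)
The plan is to imitate the proof of Lemma~\ref{lemma 71padic}, adjusting for the $d$-fold product structure and for the measure-versus-radius mismatch $(\spadesuit\spadesuit)$ peculiar to $\mathbb{F}((X^{-1}))$: this mismatch is exactly what produces the extra factor $q^d$ on the right-hand side. The strategy is similar in spirit to Claim~3 in the proof of Theorem~\ref{theorem 17}, but the Vitali covering argument used there will be replaced here by the much stronger ultrametric dichotomy, so that a constant $M_d$ does not enter.

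First, by the inner regularity of $\nu_d$ I would assume that $\Psi$ has bounded support, so that both sides involve only finite unions. Each set ${\mathcal E}_Q(\Psi)^d$ is a finite disjoint union of ``primary cubes'' of the form $B(P_1/Q,\Psi(Q)/|Q|)\times\cdots\times B(P_d/Q,\Psi(Q)/|Q|)$, and each such primary cube is simply a ball of radius $\Psi(Q)/|Q|$ in the max metric on $\mathbb{F}((X^{-1}))^d$. Since the one-dimensional ultrametric property (two balls are either disjoint or nested) passes coordinatewise to the max metric, I can extract from $\bigcup_Q{\mathcal E}_Q(\Psi)^d$ a finite disjoint sub-collection $\{C_j\}_{j\in J}$ of primary cubes, with $C_j=B(g_j,r_j)^d$, such that $\bigcup_Q{\mathcal E}_Q(\Psi)^d=\bigsqcup_{j\in J}C_j$. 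For any primary cube $C=B(f,s)^d$ contained in some $C_j$, the ultrametric fact that every point of a ball may serve as its center yields $B(f,r_j)^d=B(g_j,r_j)^d$ together with $s\leq r_j$, hence $B(f,ts)^d\subset B(g_j,tr_j)^d$ for every $t\geq1$. Consequently
\[
\bigcup_{Q}{\mathcal E}_Q(t\Psi)^d\subset\bigcup_{j\in J}B(g_j,tr_j)^d.
\]

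Finally, I would invoke $(\spadesuit\spadesuit)$ in the form $\nu(B(g,tr))\leq qtr\leq qt\,\nu(B(g,r))$, raise it to the $d$-th power coordinatewise, and sum over the disjoint family $\{C_j\}$ to conclude
\[
\nu_d\Big(\bigcup_{Q}{\mathcal E}_Q(t\Psi)^d\Big)\leq (qt)^d\sum_{j\in J}\nu_d(C_j)=q^d t^d\,\nu_d\Big(\bigcup_{Q}{\mathcal E}_Q(\Psi)^d\Big),
\]
as required. The main (modest) obstacle is bookkeeping: verifying that the ultrametric ball structure survives both passage to the max metric on $\mathbb{F}((X^{-1}))^d$ and the implicit intersection with the ambient $\mathbb{L}^d$, and that the decomposition is compatible with the scaling $\Psi\mapsto t\Psi$. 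The remainder is a direct analogue of Lemma~\ref{lemma 71padic}.
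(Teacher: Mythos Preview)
Your proposal is correct and essentially identical to the paper's proof: reduce to finite support by inner regularity, use the ultrametric dichotomy $(\clubsuit\clubsuit)$ to write the union as a disjoint union of maximal primary cubes, and bound the measure of the $t$-scaled cubes via $(\spadesuit\spadesuit)$. The only cosmetic differences are that the paper explicitly disposes of the degenerate case in which some cube already contains $\mathbb{L}^d$, and that your notation $C_j=B(g_j,r_j)^d$ should really be read as a ball of radius $r_j$ in the max metric on $\mathbb{F}((X^{-1}))^d$ (the coordinate centres $P_k/Q$ need not coincide, so $C_j$ is not literally a $d$-th power of a one-dimensional ball).
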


\begin{proof}[Proof of Lemma \ref{lemma 79}]
The proof of Lemma \ref{lemma 79} is fully identical to that of
Lemma \ref{lemma 71padic}, but we still provide the details to help
the readers get familiar with the language of formal Laurent series.
By the inner regular property of the Haar measure $\nu_d$
($\bigstar\bigstar$), we may assume without loss of generality that
the support of $\Psi$ is a non-empty set of finite elements. As any
two balls in $\mathbb{F}((X^{-1}))$ can only have either empty
intersection or one is contained in the other
($\clubsuit\clubsuit$), it is rather easy to see that
$\bigcup_{Q}{\mathcal E}_Q(\Psi)^d$ is the union of finitely many
pairwise disjointly supported $d$-dimensional non-empty open cubes
of the following form
\[\bigcup_{Q}{\mathcal E}_Q(\Psi)^d=\bigcup_{i=1}^M
\bigcup_{j=1}^{M_i}\Big(\big(\prod_{k=1}^dB(\frac{P_{i,j,k}}{Q_i},\frac{\Psi(Q_i)}{|Q_i|})\big)\cap\mathbb{L}^d\Big)\
\ \ \ \partial P_{i,j,k}<\partial Q_i,  (P_{i,j,k},Q_i)=1,\] and
\[\bigcup_{Q}{\mathcal E}_Q(t\Psi)^d=\bigcup_{i=1}^M
\bigcup_{j=1}^{M_i}\Big(\big(\prod_{k=1}^dB(\frac{P_{i,j,k}}{Q_i},\frac{t\Psi(Q_i)}{|Q_i|})\big)\cap\mathbb{L}^d\Big).\]
If there exists a pair $(i,j)$ such that $\mathbb{L}^d\subset
\prod_{k=1}^dB(\frac{P_{i,j,k}}{Q_i},\frac{\Psi(Q_i)}{|Q_i|})$, then
$\bigcup_{Q}{\mathcal E}_Q(\Psi)^d=\mathbb{L}^d$ and we need to do
nothing further. Else by the property $\clubsuit\clubsuit$ we can
assume
$\prod_{k=1}^dB(\frac{P_{i,j,k}}{Q_i},\frac{\Psi(Q_i)}{|Q_i|})\subset\mathbb{L}^d$
for all the pairs $(i,j)$. Consequently, by the property
$\spadesuit\spadesuit$ we
have\begin{align*}\nu_d\Big(\bigcup_{Q}{\mathcal
E}_Q(t\Psi)^d\Big)&\leq
\sum_{i=1}^M\sum_{j=1}^{M_i}\nu_d\Big(\prod_{k=1}^dB(\frac{P_{i,k}}{Q_i},\frac{t\Psi(Q_i)}{|Q_i|})\Big)\\
&\leq q^d
t^d\sum_{i=1}^M\sum_{j=1}^{M_i}\nu_d\Big(\prod_{k=1}^dB(\frac{P_{i,k}}{Q_i},\frac{\Psi(Q_i)}{|Q_i|})\Big)=q^dt^d\nu_d\Big(\bigcup_{Q}{\mathcal
E}_Q(\Psi)^d\Big).\end{align*} This finishes the proof of Lemma
\ref{lemma 79}.
\end{proof}

In the formal Laurent series case we do have the Gallagher type
zero-one law (\cite[Thm. 1]{Inoue}). Thus in view of Theorem
\ref{theorem 71} and Lemma \ref{lemma 79}, a proof of Theorem
\ref{theorem 78} can be easily obtained by mimicking the proofs of
Theorems \ref{theorem 17} \& \ref{theorem 72}.

\subsection{Proof of Theorem \ref{theorem 114new}}

\begin{lemma}\label{lemma 72} $\nu_d({\mathcal H}^{(d)}(t\Psi))\in\{0,1\}$ is independent of $t>0$.
\end{lemma}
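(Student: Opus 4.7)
The plan is to adapt the classical Cassels-Gallagher zero-one law to the formal Laurent series setting. The claim has two parts: (a) a zero-one dichotomy $\nu_d({\mathcal H}^{(d)}(t\Psi))\in\{0,1\}$ for each fixed $t>0$, and (b) independence in $t$. For (a), the set ${\mathcal H}^{(d)}(t\Psi)$ is a limsup of finite unions of $d$-dimensional ultrametric balls centered at rationals in $\mathbb{L}^d$, and the Gallagher-type zero-one law of Inoue \cite[Thm.~1]{Inoue} applies. The minor change of coprimality condition, $(P_1,\ldots,P_d,Q)=1$ instead of each $P_i$ being coprime to $Q$, does not affect the validity of Inoue's argument, which depends only on the translation invariance of $\nu_d$ under shifts by $\mathbb{F}[X]^d$ and on the ultrametric ball structure.

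For (b), I would combine monotonicity with the ultrametric structure. Monotonicity is immediate: for $0<s\le t$, the balls are nested, so ${\mathcal H}_Q^{(d)}(s\Psi)\subset {\mathcal H}_Q^{(d)}(t\Psi)$ and hence $\phi(s)\le\phi(t)$, where $\phi(t):=\nu_d({\mathcal H}^{(d)}(t\Psi))$. Moreover, because $|\cdot|$ takes values only in $q^{\mathbb Z}\cup\{0\}$, each ball $B(P/Q,t\Psi(Q)/|Q|)$ coincides with the ball whose radius is the smallest power of $q$ at least $t\Psi(Q)/|Q|$, so $\phi$ is a step function, constant on intervals $(q^{k-1},q^k]$ for $k\in\mathbb Z$. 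Combining these facts, $\phi$ is a non-decreasing $\{0,1\}$-valued step function, and it suffices to prove $\phi(qt)\le \phi(t)$ for every $t>0$, i.e., $\phi(qt)=1\Rightarrow\phi(t)=1$.

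For this implication I would invoke a Cassels-type scaling argument. Assume $\nu_d({\mathcal H}^{(d)}(qt\Psi))=1$. In the ultrametric setting the ball $B\bigl((P_1,\ldots,P_d)/Q,qt\Psi(Q)/|Q|\bigr)$ decomposes into $q^d$ disjoint sub-balls of radius $t\Psi(Q)/|Q|$, each centered at a point $(P_1+R_1,\ldots,P_d+R_d)/Q$ with $R_j\in\mathbb{F}[X]$ of degree less than $\partial Q$. After recentering through any common factor of $(P_1+R_1,\ldots,P_d+R_d)$ with $Q$, each sub-ball can be expressed as a ball in ${\mathcal H}_{Q'}^{(d)}(t\Psi)$ for some monic divisor $Q'$ of $Q$. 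Since $\phi(qt)=1$, a second Borel-Cantelli argument applied to the family of recentered sub-balls, together with quasi-independence on average verified via the ultrametric overlap structure, would show that almost every $x$ lies in infinitely many of them, so $\phi(t)=1$. The main obstacle is the bookkeeping of the coprimality condition during recentering: one must verify that the sub-balls whose new centers cannot be represented as $P'/Q'$ with $(P'_1,\ldots,P'_d,Q')=1$ contribute at most a null set to the limsup. A direct counting argument on the number of non-coprime centers for each $Q$, combined with the first Borel-Cantelli lemma, should resolve this.
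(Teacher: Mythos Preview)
Your part~(a) is plausible but underspecified: Inoue's zero-one law \cite[Thm.~1]{Inoue} is stated for $W^{(d)}(\Psi)$, where the coprimality condition is $(P_i,Q)=1$ for each $i$, not the weaker $(P_1,\ldots,P_d,Q)=1$ defining ${\mathcal H}^{(d)}$. Saying the argument ``does not affect validity'' is a claim, not a proof, and in fact the paper does not rely on it.

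The serious gap is in part~(b). Your assertion that each sub-ball of $B\bigl((P_1,\ldots,P_d)/Q,\,qt\Psi(Q)/|Q|\bigr)$, after reducing the new center $(P_i+R_i)/Q$ by a common divisor $D$, ``can be expressed as a ball in ${\mathcal H}_{Q'}^{(d)}(t\Psi)$'' is false in general: the sub-ball has radius $t\Psi(Q)/|Q|$, whereas balls in ${\mathcal H}_{Q'}^{(d)}(t\Psi)$ have radius $t\Psi(Q')/|Q'|$, and nothing forces $\Psi(Q')=\Psi(Q)/|D|$. Even granting some repair of this, the next step---``quasi-independence on average verified via the ultrametric overlap structure''---is not a minor bookkeeping issue but the entire difficulty of Duffin--Schaeffer-type problems. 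You are invoking as a tool something at least as hard as what you want to prove; the genuine Cassels--Gallagher scaling argument does not pass through a second Borel--Cantelli step.

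The paper's route avoids both problems by fibering. It first isolates a one-dimensional lemma (Lemma~\ref{lemma 74}): for a flexible coprimality datum $\omega(Q)\subset\{\text{divisors of }Q\}$, the measure $\nu({\mathcal H}(\omega,t\Psi))\in\{0,1\}$ is independent of $t>0$, proved by the standard Cassels--Gallagher mechanism in dimension one. Then for $d\ge 2$ one writes ${\mathcal H}^{(d)}(t\Psi)={\mathcal H}_{t,\ldots,t}(\Psi)$ and slices: fixing the last coordinate $g\in\mathbb{L}$, the fiber is exactly ${\mathcal H}(\omega_g,t\Psi)$ for a suitable $\omega_g$, so by Lemma~\ref{lemma 74} each fiber has $\nu$-measure $0$ or $1$ independent of $t$. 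Fubini and symmetry give independence of $\nu_d({\mathcal H}_{s_1,\ldots,s_d}(\Psi))$ in each $s_i$, and the zero-one dichotomy then follows from the cross fibering principle \cite[Thm.~3]{BHV multi}. No quasi-independence estimate enters anywhere.
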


This lemma is a Cassels-Gallagher type zero-one law. In the
classical case the author established various zero-one laws
((\cite[Theorems 3.1, 3.2, 3.3, 4.1 \& 4.3]{Li1})) via the cross
fibering principle (\cite[Thm. 3]{BHV multi}) of Beresnevich,
Haynes, Velani as well as a multi-purpose
 Cassels-Gallagher type theorem (\cite[Lemma
2.1]{Li1}). In the formal Laurent series case the key to the proof
of Lemma \ref{lemma 72} is the following concept and generalization
of \cite[Lemma 2.1]{Li1}:

\begin{definition}\label{defn 73} For any monic $Q\in\mathbb{F}[X]$, let $\omega(Q)$ be a fixed
non-empty subset of divisors of $Q$. For any non-negative function
$\Psi:\mathbb{F}[X]\rightarrow\mathbb{R}$ we denote by ${\mathcal
H}(\omega,\Psi)$ the set of $f\in\mathbb{L}$ for which
$|Qf-P|<\Psi(Q)$ holds for infinitely many triples
$(Q,P,R)\in\mathbb{F}[X]^3$ with $\partial P<\partial Q$ and $P$
being coprime to some $R\in\omega(Q)$.
\end{definition}

\begin{lemma}\label{lemma 74}
$\nu({\mathcal H}(\omega,tM))\in\{0,1\}$ is independent of $t>0$.
\end{lemma}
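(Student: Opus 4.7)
The plan is to prove Lemma \ref{lemma 74} by adapting the classical Cassels--Gallagher zero--one law and the scheme of \cite[Lemma 2.1]{Li1} to the ultrametric setting of $\mathbb{F}((X^{-1}))$. The argument naturally splits into three stages: establishing translation invariance of $\mathcal{H}(\omega,t\Psi)$ modulo null sets under a dense subgroup of $\mathbb{L}$, extracting a zero--one dichotomy from that invariance, and finally eliminating the dependence on $t$.

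First I would show that for any fixed $\alpha = A/B \in \mathbb{L}$ with $B$ monic, $(A,B)=1$, and $\partial A < \partial B$, the set $\mathcal{H}(\omega,t\Psi)$ is invariant under the translation $f \mapsto f + \alpha$ up to a $\nu$-null set. Given $f \in \mathcal{H}(\omega,t\Psi)$ with witnessing triples $(Q_n,P_n,R_n)$, I would form $Q_n' = \mathrm{lcm}(B,Q_n)$ and $P_n' = (Q_n'/Q_n)P_n + (Q_n'/B)A$, so that $|Q_n'(f+\alpha) - P_n'| = |Q_n'/Q_n|\cdot|Q_nf - P_n|$, and work along a subsequence of indices for which this can be bounded by $t\Psi(Q_n')$ and for which the coprimality $(P_n', R')=1$ can be arranged for some $R' \in \omega(Q_n')$. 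Because the collection of all such $\alpha$ is dense in $\mathbb{L}$, this yields invariance under a dense subgroup.

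Next I would deduce the dichotomy $\nu(\mathcal{H}(\omega,t\Psi)) \in \{0,1\}$. In the ultrametric setting this step is particularly clean: every open subset of $\mathbb{L}$ is a disjoint union of balls $B(g,q^{-k})$, each a coset of the open additive subgroup $B(0,q^{-k})$. Invariance modulo null sets under a dense subgroup of translations therefore forces the relative density of $\mathcal{H}(\omega,t\Psi)$ in each such coset to agree, so by the Lebesgue differentiation theorem the total measure must be $0$ or $1$.

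Finally, for independence of $t$, observe that $t \mapsto \nu(\mathcal{H}(\omega,t\Psi))$ is monotone non-decreasing and takes only the values $0$ and $1$, so the issue reduces to ruling out a finite positive threshold $t_*$. I would do this by exploiting the exact self-similarity of balls in $\mathbb{F}((X^{-1}))$: any ball of radius $t\Psi(Q)/|Q|$ decomposes as a finite disjoint union of balls of radius $s\Psi(Q)/|Q|$ (or conversely) with ratio an integer power of $q$, and combined with the translation invariance from the first stage this permits a Cassels-style transfer of full measure from one value of $t$ to any other. The main obstacle is the coprimality preservation in the first stage: after adding $A/B$, arranging some $R' \in \omega(Q_n')$ coprime to the new numerator $P_n'$ is the polynomial analogue of Gallagher's delicate $\mathbb{Z}$-argument, and the flexibility built into the definition of $\omega(Q)$ together with the option of thinning the witnessing sequence is precisely what makes this bookkeeping go through.
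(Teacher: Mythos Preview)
The paper does not actually prove Lemma~\ref{lemma 74}; it only says the argument is ``similar to those of \cite[Thm.~4]{InoueNakada} and \cite[Lemma 2.1]{Li1} with suitable modifications'' and leaves the details to the reader. Your high-level plan invokes exactly these references, so at the level of strategy you are aligned with the paper. The difficulty is that your concrete implementation of Stage~1 does not go through.

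You propose to obtain translation invariance under $f\mapsto f+A/B$ by replacing each witnessing $Q_n$ by $Q_n'=\mathrm{lcm}(B,Q_n)$ and then ``working along a subsequence'' on which $|Q_n'(f+\alpha)-P_n'|<t\Psi(Q_n')$ and $(P_n',R')=1$ for some $R'\in\omega(Q_n')$. But $\Psi$ and $\omega$ are arbitrary, with no assumed relationship between their values at $Q_n$ and at $Q_n'$. For instance $\Psi$ may vanish on every multiple of $B$, in which case $\Psi(Q_n')=0$ for all $n$ and your subsequence is empty; likewise nothing ties $\omega(Q_n')$ to $\omega(Q_n)$. This Cassels-type translation argument, which changes the denominator, only works under regularity hypotheses on $\Psi$ (e.g.\ dependence on $\partial Q$ alone), and is precisely why Gallagher's 1961 proof for the coprime setting proceeds differently. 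The route actually taken in \cite{InoueNakada} and \cite{Li1} keeps $Q$ fixed throughout: one first shows $\nu(\mathcal H(\omega,t\Psi))=\nu(\mathcal H(\omega,s\Psi))$ for all $s,t>0$ by a Gallagher ``halving'' argument carried out on the balls $B(P/Q,\Psi(Q)/|Q|)$ for each individual $Q$, and then combines this scale-independence with a Lebesgue density argument. Your Stage~3 gestures in the right direction, but since it leans on the translation invariance from Stage~1 (which you have not established), the argument as written has a genuine gap.
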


The proof of Lemma \ref{lemma 74} is similar to those of  \cite[Thm.
4]{InoueNakada} and \cite[Lemma 2.1]{Li1} with suitable
modifications, and we leave the details to the interested readers.
Now we can give a proof of Lemma \ref{lemma 72} and will only deal
with the case $d=2$ for the sake of simplicity. All the other cases
are left to the readers to check in a similar way.

\textsc{Proof of Lemma \ref{lemma 72}} ($d=2$): For any $s,t>0$, we
denote by ${\mathcal H}_{s,t}(\Psi)$ the set of
$(f,g)\in\mathbb{L}^2$ for which
\begin{equation}\label{formula 71}|f-\frac{P_1}{Q}|<\frac{s\Psi(Q)}{|Q|} \ \mbox{and}\
|g-\frac{P_2}{Q}|<\frac{t\Psi(Q)}{|Q|}\end{equation} for infinitely
many triples $(Q,P_1,P_2)\in\mathbb{F}[X]^3$ with $\partial P_1,
\partial P_2<\partial Q$, $(Q,P_1,P_2)=1$. Obviously,
$\nu_2({\mathcal H}_{s,t}(\Psi))=\nu_2({\mathcal H}_{t,s}(\Psi))$
($\maltese$). We decompose ${\mathcal H}_{s,t}(\Psi)$
 as disjoint unions
\begin{equation}\cup_{g\in\mathbb{L}}{\mathcal H}_{g}(s\Psi,t\Psi)\times\{g\},\end{equation}
where ${\mathcal H}_{g}(s\Psi,t\Psi)$ denotes the set of
$f\in\mathbb{L}$ for which (\ref{formula 71}) holds for infinitely
many triples $(Q,P_1,P_2)\in\mathbb{F}[X]^3$ with $\partial P_1,
\partial P_2<\partial Q$, $(Q,P_1,P_2)=1$.
 For any monic $Q\in\mathbb{F}[X]$, we denote
\[{\omega}_t(Q)=\{Q\}\cup\{(Q,P_2):P_2\in\mathbb{F}[X], \partial P_2<\partial Q, |g-\frac{P_2}{Q}|<\frac{t\Psi(Q)}{|Q|}\}.\]
By Definition \ref{defn 73}  it is easy to see that ${\mathcal
H}_{g}(s\Psi,t\Psi)={\mathcal H}(\omega_t,s\Psi)$. Hence by Fubini's
theorem, Lemma \ref{lemma 74} and $\maltese$, $\nu_2({\mathcal
H}_{s,t}(\Psi))$ is independent of $s,t>0$. On the other hand, we
note from Lemma \ref{lemma 74} that each fiber ${\mathcal
H}_{g}(s\Psi,t\Psi)$ of ${\mathcal H}_{s,t}(\Psi)$ with horizontal
direction has $\nu$-measure either 0 or 1. In a similar way one can
show that each fiber of ${\mathcal H}_{s,t}(\Psi)$ with vertical
direction also has $\nu$-measure either 0 or 1. Consequently,
$\nu_2({\mathcal H}_{s,t}(\Psi))\in\{0,1\}$ follows the cross
fibering principle \cite[Thm. 3]{BHV multi}. This suffices to finish
the proof of Lemma \ref{lemma 74} as we have ${\mathcal
H}^{(2)}(t\Psi)={\mathcal H}_{t,t}(\Psi)$.

\begin{lemma}\label{lemma 75} Suppose $d\geq2$. Then
 $\nu_d({\mathcal H}_Q^{(d)}(\Psi))\geq
\frac{3}{16}\min\{\Psi(Q)^d,1\}$.
\end{lemma}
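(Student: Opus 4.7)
The plan is to compute $\nu_d(\mathcal{H}_Q^{(d)}(\Psi))$ directly by splitting into the regimes $\Psi(Q)\leq 1$ and $\Psi(Q)>1$, and to count coprime tuples using the Laurent-series analogue of the Jordan totient.

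I would first establish, via Möbius inversion,
$$\Phi_d(Q):=\#\{(P_1,\ldots,P_d)\in\mathbb{F}[X]^d : \partial P_i<\partial Q,\ (P_1,\ldots,P_d,Q)=1\} = |Q|^d\prod_{p\mid Q}(1-|p|^{-d}),$$
where the product is over monic irreducibles $p$ dividing $Q$. Since $\sum_{Q\text{ monic}}|Q|^{-d}=(1-q^{1-d})^{-1}$ for $d\geq 2$, this gives
$$\Phi_d(Q)/|Q|^d\geq \prod_{p\text{ monic irred.}}(1-|p|^{-d})=1-q^{1-d}\geq 1/2$$
whenever $d\geq 2$ and $q\geq 2$.

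When $\Psi(Q)\leq 1$, the balls $B(P_i/Q,\Psi(Q)/|Q|)$ for distinct $P_i$ with $\partial P_i<\partial Q$ are pairwise disjoint by the ultrametric property (centers are separated by at least $1/|Q|$, which bounds the radius from above) and are contained in $\mathbb{L}$ (because $|P_i/Q|<1$ and the radius is at most $1$). By the ball-measure estimate $(\spadesuit\spadesuit)$, each such ball has $\nu$-measure at least $\Psi(Q)/|Q|$, so each product of balls contributes at least $(\Psi(Q)/|Q|)^d$, and the products corresponding to distinct coprime tuples are disjoint. Summing gives
$$\nu_d(\mathcal{H}_Q^{(d)}(\Psi))\geq\Phi_d(Q)\cdot\Big(\frac{\Psi(Q)}{|Q|}\Big)^d\geq\frac{\Psi(Q)^d}{2}.$$

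When $\Psi(Q)>1$, some of the balls coincide. I would group polynomials $P$ with $\partial P<\partial Q$ into equivalence classes under $P\sim P'\Longleftrightarrow|P-P'|<\Psi(Q)$, which by the ultrametric property is precisely when $B(P/Q,\Psi(Q)/|Q|)=B(P'/Q,\Psi(Q)/|Q|)$. All classes share a common size $t$ (namely the number of $\Delta\in\mathbb{F}[X]$ with $|\Delta|<\Psi(Q)$, truncated by $|Q|$); the distinct balls then partition $\mathbb{L}$, each having measure $t/|Q|$. A $d$-tuple of distinct balls contributes to $\mathcal{H}_Q^{(d)}(\Psi)$ iff the corresponding product of equivalence classes contains a coprime representative, and since each such product contains at most $t^d$ candidate tuples, the number of ``good'' ball-tuples is at least $\Phi_d(Q)/t^d$. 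This yields
$$\nu_d(\mathcal{H}_Q^{(d)}(\Psi))\geq\frac{\Phi_d(Q)}{t^d}\cdot\Big(\frac{t}{|Q|}\Big)^d=\frac{\Phi_d(Q)}{|Q|^d}\geq\frac{1}{2};$$
the edge case $\Psi(Q)\geq|Q|$ reduces to a single ball equal to $\mathbb{L}$, giving $\nu_d=1$ trivially. Combining both regimes produces $\nu_d(\mathcal{H}_Q^{(d)}(\Psi))\geq\tfrac{1}{2}\min\{\Psi(Q)^d,1\}\geq\tfrac{3}{16}\min\{\Psi(Q)^d,1\}$. The main obstacle I anticipate is the combinatorial bookkeeping in the overlapping regime, specifically verifying that the equivalence classes have a common size $t$ independent of the class, and then correctly translating a lower bound on the number of coprime tuples into a lower bound on the number of ball-tuples that ``lift'' to a coprime tuple.
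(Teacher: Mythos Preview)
Your argument is correct and follows the same skeleton as the paper: reduce to the disjoint regime, count coprime $d$-tuples via M\"obius, and bound $\Theta^{(d)}(Q)/|Q|^d$ from below. Two differences are worth noting. First, your lower bound $\Phi_d(Q)/|Q|^d\geq \prod_{p}(1-|p|^{-d})=1/\zeta_{\mathbb{F}[X]}(d)=1-q^{1-d}\geq\tfrac12$ is cleaner and sharper than the paper's: the paper instead uses the crude inequality $\mu(R)\geq-1$ to bound $\sum_{R\mid Q}\mu(R)|R|^{-d}$ and is forced into a case split ($q^{d-1}\geq3$ versus $q=d=2$), the second case producing only $3/16$. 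Second, your equivalence-class treatment of the regime $\Psi(Q)>1$ is correct but unnecessary; the paper simply writes ``without loss of generality $\Psi(Q)<1$'', which is just monotonicity in $\Psi$. Since your disjointness argument already works at the endpoint $\Psi(Q)=1$, you can handle $\Psi(Q)>1$ in one line by applying the inclusion $\mathcal{H}_Q^{(d)}(\Psi)\supseteq\mathcal{H}_Q^{(d)}(\Psi')$ with $\Psi'(Q)=1$, and skip the combinatorial bookkeeping you flagged as a concern.
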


This lemma can be regarded  as either a Gallagher type estimate
(\cite[formla (9)]{GallagherHigher}, see also \cite{BV higher
dimensions}) or a Pollington-Vaughan type estimate (\ref{formula
141414}). To give a proof we may assume without loss of generality
that $\Psi(Q)<1$, and suppose this is the case. Thus ${\mathcal
H}_Q^{(d)}(\Psi)$ is the union of pairwise disjointly supported
$d$-dimensional open cubes of the following form
\[{\mathcal
H}_Q^{(d)}(\Psi)=\bigcup_{{P_i\in\mathbb{F}[X]\atop
\partial P_i<\partial Q}\atop
(P_1,P_2,\cdots,P_d,Q)=1}\prod_{i=1}^dB(\frac{P_i}{Q},\frac{\Psi(Q)}{|Q|}).\]
By the property $\spadesuit\spadesuit$, we have
\[\nu_d({\mathcal
H}_Q^{(d)}(\Psi))\geq(\frac{\Psi(Q)}{|Q|})^d\cdot\Theta^{(d)}(Q),\]
where $\Theta^{(d)}(Q)$ denotes the size of the set of
$\mathbf{P}=(P_1,\ldots,P_d)\in\mathbb{F}[X]^d$ such that $\partial
\mathbf{P}=\max_i\partial P_i<\partial Q$,
$(\mathbf{P},Q)=(P_1,P_2,\cdots,P_d,Q)=1$. With the help of the
M\"{o}bius function defined by
$$
\mu(Q)= \left\{
  \begin{array}{ll}
  1, & \partial Q=0\\
  (-1)^k,      &  \mbox{if}\ Q\ \mbox{is the product of}\ k\  \mbox{distinct monic irreducible polynomials}\\
  0,   & \mbox{if}\ Q\ \mbox{is divisible by the square of an irreducible polynomial}\\
  \end{array}
  \right.
$$
and one of its fundamental properties $\sum_{R|Q}\mu(R)=0$ whenever
$\partial Q\geq1$, where $\sum_{R|Q}$ denotes the sum over all monic
divisors $R$ of $Q$, we have
\begin{align*}
\Theta^{(d)}(Q)&=\sum_{\partial\mathbf{P}<\partial
Q}\sum_{R|(\mathbf{P},Q)}\mu(R)=\sum_{R|Q}\Big(\mu(R)\cdot\sum_{\mathbf{P}:
R|\mathbf{P},
\partial\mathbf{P}<\partial Q}1\Big)\\
&=\sum_{R|Q}\mu(R)\cdot(\frac{|Q|}{|R|})^d=|Q|^d\cdot\sum_{R|Q}\frac{\mu(R)}{|R|^d}.
\end{align*}
Consequently, $\nu_d({\mathcal
H}_Q^{(d)}(\Psi))\geq\Psi(Q)^d\cdot\sum_{R|Q}\frac{\mu(R)}{|R|^d}.$
Now we suppose $d\geq2$ and have two cases to consider.

Case 1: Suppose $q^{d-1}\geq3$. In this case we have
\[\sum_{R|Q}\frac{\mu(R)}{|R|^d}\geq1-\sum_{k=1}^{\infty}
\sum_{R \ \mbox{is monic},\ \partial
R=k}\frac{1}{q^{kd}}\geq1-\sum_{k=1}^{\infty}\frac{q^k}{q^{kd}}\geq\frac{1}{2}.\]

Case 2: Suppose $q=d=2$. In this case we have
\[\sum_{R|Q}\frac{\mu(R)}{|R|^2}\geq1-\frac{2^1}{2^2}-\frac{2^2-3}{2^4}-\frac{2^3}{2^6}-\frac{2^4}{2^8}-\cdots=\frac{3}{16},\]
where $2^2-3$ comes from the contribution made only by $X^2+X+1$ as
it is the sole second order element whose M\"{o}bius value is
negative. This suffices to conclude the proof of Lemma \ref{lemma
75}.

\begin{lemma}\label{lemma 76} For any $z_1,z_2\in \mathbb{Z}$ and
any
nonzero element $g\in\mathbb{F}((X^{-1}))$,
\[\sum_{\mathbf{P}\in\mathbb{F}[X]^d\backslash\{\mathbf{0}\}}\nu_d\big(\mathbb{L}_{z_1}^d\cap(\mathbb{L}_{z_2}^d+g\mathbf{P})\big)
\leq\frac{\nu_d(\mathbb{L}_{z_1}^d)\cdot\nu_d(\mathbb{L}_{z_2}^d)}{|g|^d},\]
where $\mathbb{L}_z$ denotes the set of elements of
$\mathbb{F}((X^{-1}))$ with degrees less than $z$.
\end{lemma}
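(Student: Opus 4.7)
The plan is to exploit the ultrametric geometry of $\mathbb{F}((X^{-1}))$ (property $\clubsuit\clubsuit$ above) to make each summand either vanish or take a single explicit value, so the estimate collapses to a lattice-point count of the $\mathbf{P}\in\mathbb{F}[X]^d$ for which the intersection is nonempty.

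First I would fix $\mathbf{P}$ and compute $\nu_d\big(\mathbb{L}_{z_1}^d\cap(\mathbb{L}_{z_2}^d+g\mathbf{P})\big)$ exactly. In a non-archimedean space two open balls are either disjoint or nested, and applied coordinate by coordinate $\mathbb{L}_{z_1}=B(0,q^{z_1})$ meets $\mathbb{L}_{z_2}+gP_i=B(gP_i,q^{z_2})$ iff $|gP_i|<q^{\max(z_1,z_2)}=:q^{w}$, in which case the intersection is the smaller of the two balls and has $\nu$-measure $q^{\min(z_1,z_2)}$. Taking products over the $d$ coordinates, $\mathbb{L}_{z_1}^d\cap(\mathbb{L}_{z_2}^d+g\mathbf{P})$ is nonempty precisely when $g\mathbf{P}\in\mathbb{L}_{w}^d$, and then has $\nu_d$-measure $q^{d\min(z_1,z_2)}$.

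Next I would count the $\mathbf{P}\in\mathbb{F}[X]^d\backslash\{\mathbf{0}\}$ satisfying this condition. Writing $|g|=q^{\partial g}$, the requirement $g\mathbf{P}\in\mathbb{L}_{w}^d$ is just $\partial P_i<w-\partial g$ for every $i$. Since the polynomials of degree strictly less than a nonnegative integer $k$ form a $q^{k}$-element $\mathbb{F}$-vector space, the number of nonzero $d$-tuples is $q^{d\max(0,\,w-\partial g)}-1$, which vanishes when $w<\partial g$. Combining the two steps, the left-hand side of the lemma equals $\big(q^{d\max(0,w-\partial g)}-1\big)\,q^{d\min(z_1,z_2)}$; using $\nu_d(\mathbb{L}_{z_i}^d)=q^{dz_i}$ together with $z_1+z_2=w+\min(z_1,z_2)$, the right-hand side is $q^{d(w+\min(z_1,z_2)-\partial g)}$, so after cancellation the claim reduces to the elementary inequality $q^{d\max(0,w-\partial g)}-1\leq q^{d(w-\partial g)}$, which is strict when $w\geq\partial g$ and vacuously $0\leq q^{d(w-\partial g)}$ otherwise.

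I do not foresee a serious obstacle: the ultrametric collapse of overlapping balls turns the measure-theoretic sum into an explicit finite count. The only mild bookkeeping items are the degenerate regime $w<\partial g$, where no nonzero $\mathbf{P}$ contributes, and the exclusion of the zero tuple, both of which are absorbed into the $\max(0,\cdot)$ and the $-1$ appearing in the count.
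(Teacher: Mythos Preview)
Your argument is correct and follows essentially the same route as the paper: both proofs observe that a term contributes only when $\partial\mathbf{P}<\max(z_1,z_2)-\partial g$, count such $\mathbf{P}$ by $q^{d(\max(z_1,z_2)-\partial g)}$, and bound each nonempty intersection by $\nu_d(\mathbb{L}_{\min(z_1,z_2)}^d)$. The only cosmetic difference is that the paper fixes $z_1\geq z_2$ and states the bound directly, whereas you keep the $\max/\min$ notation and compute the left-hand side exactly before comparing.
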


\begin{proof}
Without loss of generality we may assume that $z_1\geq z_2$. If
$z_1\leq \partial g$, then we have nothing to prove as the left hand
side of the desired inequality is zero. So we can assume  $z_1>
\partial g$. In this case for any $\mathbf{P}\in\mathbb{F}[X]^d$
with
$\mathbb{L}_{z_1}^d\cap(\mathbb{L}_{z_2}^d+g\mathbf{P})\neq\emptyset$,
one must have $\partial\mathbf{P}<z_1-\partial g$. This implies
\[\sum_{\mathbf{P}\in\mathbb{F}[X]^d\backslash\{\mathbf{0}\}}\nu_d
\big(\mathbb{L}_{z_1}^d\cap(\mathbb{L}_{z_2}^d+g\mathbf{P})\big)\leq(q^{z_1-\partial
g})^d\cdot\nu_d(\mathbb{L}_{z_2}^d).\] This finishes the proof of
Lemma \ref{lemma 76}.
\end{proof}

Proof of Theorem \ref{theorem 114new}: According to Lemmas
\ref{lemma 72}, \ref{lemma 75} and the first Borel-Cantelli lemma,
we may assume without loss of generality that
$\Psi(Q)\in\{q^{-1},q^{-2},q^{-3},\ldots\}$ for all
$Q\in\mathbb{F}[X]$.
 By  Lemma \ref{lemma 72} and the second Borel-Cantelli lemma it suffices to establish the quasi-independence property for
${\mathcal H}_Q^{(d)}(\Psi)$ and ${\mathcal H}_{Q'}^{(d)}(\Psi)$,
where $Q,Q'$ are any two distinct monic elements of $\mathbb{F}[X]$.
To this aim, we first denote $\mathbb{U}(s)=\mathbb{L}^d_{\log_qs}$,
then rewrite ${\mathcal H}_Q^{(d)}(\Psi)$ as
\[
{\mathcal H}_Q^{(d)}(\Psi)=\bigcup_{\partial\mathbf{P}<\partial
Q\atop
(\mathbf{P},Q)=1}\big(\mathbb{U}(\frac{\Psi(Q)}{|Q|})+\frac{\mathbf{P}}{Q}\big).\]
Thus
\[
\nu_d\big({\mathcal H}_Q^{(d)}(\Psi)\cap{\mathcal
H}_{Q'}^{(d)}(\Psi)\big)\leq\sum_{\partial\mathbf{P}<\partial Q\atop
(\mathbf{P},Q)=1} \sum_{\partial\mathbf{P}'<\partial Q'\atop
(\mathbf{P}',Q')=1}\nu_d\Big(\mathbb{U}(\frac{\Psi(Q)}{|Q|})\cap
\big(\mathbb{U}(\frac{\Psi(Q')}{|Q'|})+\frac{\mathbf{P}'}{Q'}-\frac{\mathbf{P}}{Q}\big)\Big).
\]
It is easy to verify that
$\frac{\mathbf{P}'}{Q'}-\frac{\mathbf{P}}{Q}$ is always  a nonzero
element of $\mathbb{F}((X^{-1}))^d$ whenever
$\partial\mathbf{P}<\partial Q,
\partial\mathbf{P}'<\partial Q', (\mathbf{P},Q)=(\mathbf{P}',Q')=1$, and
if we further have
$\frac{\mathbf{P}'}{Q'}-\frac{\mathbf{P}}{Q}=\frac{\mathbf{R}'}{Q'}-\frac{\mathbf{R}}{Q},$
$\partial\mathbf{R}<\partial Q, \partial\mathbf{R}'<\partial Q',
(\mathbf{R},Q)=(\mathbf{R}',Q')=1$, then
\[\partial(\mathbf{P}-\mathbf{R})<\partial Q\ \mbox{and}\ \frac{Q}{(Q,Q')}|(\mathbf{P}-\mathbf{R}).\]
These facts imply that every fixed (nonzero) element of the form
$\frac{\mathbf{P}'}{Q'}-\frac{\mathbf{P}}{Q}$ can be repeated at
most $|(Q,Q')|^d$ times. Consequently, by Lemmas \ref{lemma 75} and
\ref{lemma 76} we have
\begin{align*}
\nu_d\big({\mathcal H}_Q^{(d)}(\Psi)\cap{\mathcal
H}_{Q'}^{(d)}(\Psi)\big)&\leq
|(Q,Q')|^d\cdot\frac{(\frac{\Psi(Q)}{|Q|})^d\cdot
(\frac{\Psi(Q')}{|Q'|})^d}{|\frac{(Q,Q')}{QQ'}|^d}=\Psi(Q)^d\cdot\Psi(Q')^d\\
&\leq \frac{256}{9}\cdot \nu_d({\mathcal
H}_Q^{(d)}(\Psi))\cdot\nu_d({\mathcal H}_{Q'}^{(d)}(\Psi)).
\end{align*}
This finishes the whole proof of Theorem \ref{theorem 114new}.

\section{Weighted second Borel-Cantelli lemma }\label{App BC}

 This section is devoted to the proof of Theorem
\ref{theorem 19}. First let us explain why Theorem \ref{theorem 19}
is a generalization of the Beresnevich-Harman-Haynes-Velani theorem
(\cite[Thm. 2]{BHHV}) which claims $\lambda(W(\psi))=1$ if for some
$c>0$,
\begin{equation}\label{formula 81}
\sum_{n=16}^{\infty}\frac{\varphi(n)\psi(n)}{n\exp(c(\log\log
n)(\log\log\log n))}=\infty.
\end{equation}
Suppose (\ref{formula 81}) is true. Obviously, there exists a
$c_1>0$ such that (see the second section for the meanings of
$\Delta_h,S_h,B_h,Q_h,R_h$)
\[\sum_{h\in\mathbb{N}}\frac{S_h}{\exp(c_1h\log h)}=\infty,\]
from which we can deduce a $c_2>0$ and an infinite subset $H$ of
$\mathbb{N}$ such that for all $h\in H$, $S_h\geq \exp(c_2h\log h)$.
In fact, we can define $h\in H$ by
\[\frac{S_h}{\exp(c_1h\log h)}\geq\frac{1}{h^2}.\]
Thus $\lambda(W(\psi))=1$ follows easily from Theorem \ref{theorem
19}.

Next, let us explain how will we make use of the weighted version
Lemma \ref{lemma 17} in the proof of Theorem \ref{theorem 19}.
According to the discussions in the last paragraph of the second
section, we can assume $P(m,n)\ll1$ for any $m,n$ lying in any
corresponding distinct blocks $\Delta_{h_1},\Delta_{h_2}$. By
appealing to a theorem  of Pollington and Vaughan (\cite[Theorem
2]{PV2}), we may also assume without loss of generality that
$\psi(n)\leq1/2$ for all $n\in\mathbb{N}$. This means
$\lambda({\mathcal E}_n)=2\frac{\psi(n)\varphi(n)}{n}$ for all
$n\in\mathbb{N}$.
 Fix
arbitrarily $\omega_{\Delta_h}\in[0,1]$ such that
$\sum_h\omega_{\Delta_h}S_h=\infty$. With
$\omega_n\triangleq\omega_{\Delta_h}$ $(n\in\Delta_h)$ and
${\mathcal A}_n\triangleq{\mathcal E}_n$ $(n\geq5)$ we can apply
Lemma \ref{lemma 17} to get
\begin{align}\lambda(W(\psi))\gg\limsup_{N\rightarrow\infty}\frac{\displaystyle\big(\sum_{h=1}^N\omega_{\Delta_h}S_h\big)^2}{\displaystyle
\sum_{h=1}^N\omega_{\Delta_h}^2S_h+\sum_{h=1}^N\omega_{\Delta_h}^2Q_h+\big(\sum_{h=1}^N\omega_{\Delta_h}S_h\big)^2}.\end{align}
Thus if one can show that
\begin{align}\label{formula 52}\sum_{h=1}^N\omega_{\Delta_h}^2Q_h\ll\big(\sum_{h=1}^N\omega_{\Delta_h}S_h\big)^2\end{align}
for infinitely many $N$, then $\lambda(W(\psi))=1$ follows from
Gallagher's zero-one law.


\textsc{Proof of Theorem \ref{theorem 19}}: Suppose we have
(\ref{formula 111111111111}), which is equivalent to
\begin{equation}\label{formula 53}\sum_{h:S_h\geq e^e}\frac{\log S_h}{h\cdot \log\log S_h}=\infty.\end{equation}
 Our purpose is to show that $\lambda(W(\psi))=1$. If there exist infinitely many $h\in\mathbb{N}$
such that $S_h\geq \exp(h\log h)$, then we can apply the
aforementioned Beresnevich-Harman-Haynes-Velani theorem to deduce
$\lambda(W(\psi))=1$. Thus we can assume without loss of generality
that
\begin{equation}\label{formula 54444}S_h\leq
\exp(h\log h)\ \ (h\in\mathbb{N}).\end{equation} For any
$h\in\mathbb{N}$ with $S_h\geq e^e$, let ${\mathcal D}_j^{(h)}$
$(j\geq-1)$ denote the collection of pairs
$(m,n)\in\Delta_h\times\Delta_h$, $m\neq n$, such that $e^j\leq
D(m,n)<e^{j+1}$. Denote
\[A_j^{(h)}=A_j^{(h)}(\psi)=\sum_{(m,n)\in {\mathcal D}_j^{(h)}}\psi(m)\psi(n)\frac{\varphi(m)}{m}\frac{\varphi(n)}{n}.\]
 Define two functions $f^{(h)}$, $g^{(h)}$ on the set of
integers $\mathbb{Z}$ respectively by
\begin{align*}f^{(h)}(j)&=\begin{cases}
A_j^{(h)} & (j\geq-1) \\
0 & (j<-1),
\end{cases}\\
g^{(h)}(j)&=\begin{cases}
\frac{h}{-j+2} & (-\frac{\log S_h}{\log\log S_h}\leq j\leq1) \\
0 &  (j>1\ \mbox{or}\ j<-\frac{\log S_h}{\log\log S_h}).
\end{cases}\end{align*}
The convolution of $f^{(h)}$ and $g^{(h)}$ is defined usually as
\[(f^{(h)}\ast g^{(h)})(k)=\sum_{{j\in\mathbb{Z}}}f^{(h)}(j)g^{(h)}(k-j)\ \ \ (k\in\mathbb{Z}).\]
For simplicity we denote $y_h=\frac{\log S_h}{\log\log S_h}$. Note
it is easy to deduce from  $e^e\leq S_h\leq\exp(h\log h)$ that
$y_h\leq h$. With these preparations we have
\begin{align*}
\sum_{k:0\leq k\leq \log
S_h}\sum_{j=k-1}^{k+y_h}\frac{h}{j-k+2}A_j^{(h)}&\leq\|f^{(h)}\ast
g^{(h)}\|_{l^1(\mathbb{Z})}\\
&\leq\|f^{(h)}\|_{l^1(\mathbb{Z})}\cdot
\|g^{(h)}\|_{l^1(\mathbb{Z})}\ll S_h^2\cdot h\cdot\log\log S_h.
\end{align*}
Thus there exists a non-negative integer
\begin{equation}\label{formula 55555}k_h\leq \log S_h\end{equation}
such that \[\sum_{j=k_h-1}^{k_h+y_h}\frac{h}{j-k_h+2}A_j^{(h)}\ll
\frac{S_h^2\cdot h\cdot \log\log S_h}{\log S_h},\] which is
equivalent to
\begin{equation}\label{formula 565656}
\sum_{j=k_h-1}^{k_h+y_h}\frac{h}{j-k_h+2}A_j^{(h)}(\frac{\psi}{e^{k_h}})\ll
S_h(\frac{\psi}{e^{k_h}})^2\cdot\frac{h\cdot\log\log S_h}{\log
S_h}.\end{equation} On the other hand,
\begin{equation}\label{formula 575757}\sum_{j\geq
k_h+y_h+1}\frac{h}{j-k_h+2}A_j^{(h)}(\frac{\psi}{e^{k_h}})\ll
\frac{h}{y_h}\cdot S_h(\frac{\psi}{e^{k_h}})^2=
S_h(\frac{\psi}{e^{k_h}})^2\cdot\frac{h\cdot\log\log S_h}{\log
S_h}.\end{equation} Combining (\ref{formula 565656}) and
(\ref{formula 575757}) gives \[\sum_{j\geq
k_h-1}\frac{h}{j-k_h+2}A_j^{(h)}(\frac{\psi}{e^{k_h}})\ll
S_h(\frac{\psi}{e^{k_h}})^2\cdot\frac{h\cdot\log\log S_h}{\log
S_h},\] which followed by applying (\ref{formula 2222}) and
(\ref{formula 25})  gives
\begin{align}\label{formula 566666}
R_h(\frac{\psi}{e^{k_h}})\ll\frac{h\cdot \log\log S_ h}{\log S_h}.
\end{align}
We now define
\begin{align*}
\overline{\psi}(n)&=\begin{cases}
\frac{\displaystyle\psi(n)}{\displaystyle e^{k_h}} & (n\in\Delta_h,  S_h\geq e^e) \\
0 & (\mbox{otherwise}),
\end{cases}\\
\omega_{\Delta_h}&=\begin{cases}
\frac{\displaystyle 1}{\displaystyle S_h(\overline{\psi})}\cdot\frac{\log S_h}{h\cdot\log\log S_h} & (S_h\geq e^e) \\
0 & (\mbox{otherwise}).
\end{cases}
\end{align*}
 By (\ref{formula 53})$\sim$(\ref{formula 55555}) we see that
 $\omega_{\Delta_h}\leq1$ and
$\sum_h\omega_{\Delta_h}S_h(\overline{\psi})=\infty$. By
(\ref{formula 566666}) we have
\begin{align}\label{formula 577}
\sum_{(N)}\omega_{\Delta_h}^2
Q_h(\overline{\psi})\ll\sum_{(N)}\frac{\log S_h}{h\cdot\log\log S_h}
\ll\big(\sum_{(N)} \frac{\log S_h}{h\cdot\log\log
S_h}\big)^2=\big(\sum_{(N)}\omega_{\Delta_h}S_h(\overline{\psi})\big)^2\end{align}
for large enough $N$, where $\sum_{(N)}$ denotes the sum over all
$h\in[1,N]$ with $S_h\geq e^e$. It follows from the definition of
$\omega_{\Delta_h}$ and (\ref{formula 577}) that for large enough
$N$,
\begin{align}\label{formula 58}
\sum_{h=1}^N\omega_{\Delta_h}^2\cdot
Q_h(\overline{\psi})\ll\big(\sum_{h=1}^N\omega_{\Delta_h}S_h(\overline{\psi})\big)^2,
\end{align}
which verifies (\ref{formula 52}) for the function
$\overline{\psi}$. So we get $\lambda(W(\overline{\psi}))=1$. Since
$\overline{\psi}\leq\psi$, we immediately have $\lambda(W(\psi))=1$.
This finishes the whole proof of Theorem \ref{theorem 19}.

\begin{remark}
We remark that one can also use the weighted second Borel-Cantelli
lemma to reprove a theorem of the author (\cite{Li}) claiming
$\lambda(W(\psi))=1$ if
\[\sum_{n=1}^{\infty}\psi(n)^{1+\epsilon}\cdot\frac{\varphi(n)}{n}=\infty\]
for some $\epsilon>0$, where $\psi$ is any prescribed bounded
non-negative function. Obviously, we may assume $\epsilon<0.5$. By
\cite[Lemma 5]{Harman} there exists a sequence of distinct integers
$\{n_k\}$ such that $\psi(n_k)^{\epsilon}\cdot\lambda({\mathcal
E}_{n_k}(\psi))<1/k$ for all $k\in\mathbb{N}$ and
\[
\sum_{k=1}^{\infty}\psi(n_k)^{\epsilon}\cdot\lambda({\mathcal
E}_{n_k}(\psi))=\infty.
\]
We can first apply the weighted second Borel-Cantelli lemma with
$\omega_k\triangleq\psi(n_k)^{\epsilon}$ and ${\mathcal
A}_k\triangleq{\mathcal E}_{n_k}(\psi)$ to give a lower bound for
$\lambda(W(\psi))$, then mimic Harman's proof of \cite[Thm.
1]{Harman} to deduce the positiveness of this lower bound. Finally
by Gallagher's zero-one law, we are done.  The details are left to
the interested readers to verify.
\end{remark}

\section{Further questions}

Based on Theorem \ref{theorem 17}, we know that if the classical
Duffin-Schaeffer conjecture is true, then there exists a universal
constant $C>0$ such that
\[\lambda(\bigcup_{n=1}^{\infty}{\mathcal E}_n(\psi))\geq C\sum_{n=1}^{\infty}\lambda({\mathcal
E}_n(\psi))\] for any non-negative function
$\psi:\mathbb{N}\rightarrow\mathbb{R}$
 with
$\sum_{n=1}^{\infty}\lambda({\mathcal E}_n(\psi))\leq1$. This lower
bound may not well reflect the true value of
$\lambda(\cup_{n=1}^{\infty}{\mathcal E}_n(\psi))$ when
$\sum_{n=1}^{\infty}\lambda({\mathcal E}_n(\psi))$ is sufficiently
large, so we are bold enough to propose

\begin{Conjecture}\label{conjecture 91}
There exists a universal constant $M_d>0$ depending only on
$d\in\mathbb{N}$ such that if
$\sum_{n=1}^{\infty}\lambda_d({\mathcal E}_n(\psi)^d)\geq M_d$, then
$\lambda_d(\cup_{n=1}^{\infty}{\mathcal E}_n(\psi)^d)=1.$
\end{Conjecture}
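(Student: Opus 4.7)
The plan is to combine the equivalence in Theorem \ref{theorem 17} (which is unconditional for $d \geq 2$ via Pollington--Vaughan's resolution of Sprind\u{z}uk's conjecture) with a block decomposition in the spirit of Claim 3, followed by a weighted Borel--Cantelli aggregation along the lines of Lemma \ref{lemma 17}, while acknowledging that the rigid equality conclusion of Conjecture \ref{conjecture 91} demands input beyond Conjecture \ref{Conjecture 16} itself.

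Focusing on $d = 1$, and assuming Conjecture \ref{Conjecture 16} so that $\lambda(Z(\phi)) \geq C \min\{S(\phi), 1\}$ for every non-negative $\phi$ and some universal $C > 0$, I would partition the support of $\psi$ into $K \sim \lfloor M_1/2 \rfloor$ disjoint blocks $I_1, \ldots, I_K$ with $\sum_{n \in I_j} \lambda(\mathcal{E}_n(\psi)) \in [1, 2]$, set $U_j = \bigcup_{n \in I_j} \mathcal{E}_n(\psi)$, and observe that Conjecture \ref{Conjecture 16} applied to each $\psi \cdot \mathbf{1}_{I_j}$ yields $\lambda(U_j) \geq C$ uniformly in $j$. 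One would then apply Lemma \ref{lemma 17} to the sequence $(U_j)_j$ with weights tailored to the arithmetic scales of the blocks, using (\ref{formula 25}) together with a separation analogue of (\ref{formula 25555}) to bound $\sum_{i \neq j} \lambda(U_i \cap U_j)$; a successful aggregation would produce $\lambda(Z(\psi)) \geq 1 - \eta(K)$ with $\eta(K) \to 0$ as $K \to \infty$.

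The main obstacle is promoting the ``almost $1$'' lower bound to the equality $\lambda(Z(\psi)) = 1$. Unlike $W(\psi)$, the set $Z(\psi)$ is not governed by Gallagher's zero-one law, so this step cannot be automated. A telling stress-test is $\psi = \frac{1}{2}\mathbf{1}_P$ supported on a finite set $P$ of distinct primes: here $S(\psi) \asymp |P|$ can be made as large as we please, yet each $\mathcal{E}_p = (\frac{1}{2p}, 1 - \frac{1}{2p})$ avoids a neighborhood of $0 \in \mathbb{R}/\mathbb{Z}$ of length $1/p$, and so $Z(\psi)^c$ retains positive measure $1/p_{\max}$ no matter how many primes one includes. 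Consequently a genuine proof must either precede the block decomposition with an Erd\"{o}s--Vaaler style normalization that forces the support of $\psi$ to meet enough arithmetic classes to spread the Farey fractions across the torus, or exploit a self-similarity and translation argument ruling out persistent Lebesgue density points of $Z(\psi)^c$. Determining which (if either) of these routes can be pushed through, and whether a suitable arithmetic reduction restores the equality in the conclusion, is in my view the heart of Conjecture \ref{conjecture 91}.
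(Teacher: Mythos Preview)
The statement you are attempting to prove is not a theorem in the paper: it is Conjecture~\ref{conjecture 91}, posed in the closing section on further questions, and the author explicitly writes that it ``might be too strong to hold'' before retreating to the weaker Conjecture~\ref{conjecture 92}. There is therefore no paper proof to compare against.

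More importantly, your own ``stress-test'' is not merely an obstacle --- it is a counterexample that \emph{disproves} Conjecture~\ref{conjecture 91} outright, and you should recognise it as such rather than search for a workaround. Take $\psi = \tfrac{1}{2}\mathbf{1}_P$ with $P$ a finite set of primes. For each $p \in P$ the reduced fractions are $1/p,\ldots,(p-1)/p$, so $\lambda(\mathcal{E}_p) = (p-1)/p$ and $\sum_{p \in P}\lambda(\mathcal{E}_p) \geq |P|/2$ exceeds any prescribed $M_1$ once $|P|$ is large. Yet every $\mathcal{E}_p$ omits the arc of length $1/p$ about $0\in\mathbb{R}/\mathbb{Z}$, hence $Z(\psi)$ omits an arc of length $1/p_{\max}$ and $\lambda(Z(\psi)) \leq 1 - 1/p_{\max} < 1$. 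The same construction works in every dimension~$d$, since $\mathcal{E}_p^d$ omits a cube of side $1/p$ about the origin while $\sum_p\lambda_d(\mathcal{E}_p^d)=\sum_p((p-1)/p)^d$ is unbounded. Thus no constant $M_d$ with the claimed property exists.

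Your suggested escape routes --- an Erd\"{o}s--Vaaler normalisation or a density argument --- cannot rescue the statement as written, because Conjecture~\ref{conjecture 91} is a universal claim over \emph{all} non-negative $\psi$: you are not free to replace $\psi$ by a more convenient function, and any reduction that shrinks $\psi$ only shrinks $Z(\psi)$. What your example genuinely shows is that the correct target is Conjecture~\ref{conjecture 92}, whose conclusion $\lambda_d(Z(\psi)) \geq \gamma$ for fixed $\gamma < 1$ \emph{is} compatible with the primes example (large $|P|$ forces large $p_{\max}$, hence $\lambda(Z(\psi))\to 1$). Your block-aggregation outline via Lemma~\ref{lemma 17} and the overlap bounds (\ref{formula 2222})--(\ref{formula 25}) is a reasonable starting point for \emph{that} conjecture.
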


Conjecture \ref{conjecture 91}  implies the classical
Duffin-Schaeffer and Sprind\u{z}uk conjectures. Taking for granted
that Conjecture \ref{conjecture 91} is true, we can understand  the
quantitative theory pioneered by Schmidt (\cite{Schmidt}) even
better. Let $\psi:\mathbb{N}\rightarrow\mathbb{R}$ be any
non-negative function such that
$\sum_{n=1}^{\infty}\lambda_d({\mathcal E}_n(\psi)^d)=\infty$, and
let $0=N_0<N_1<N_2<N_3<\cdots$ be the unique sequence of integers
such that for all non-negative integers $k$,
\[\sum_{n=N_k+1}^{N_{k+1}-1}\lambda_d({\mathcal E}_n(\psi)^d)<M_d
\leq\sum_{n=N_k+1}^{N_{k+1}}\lambda_d({\mathcal E}_n(\psi)^d)<
M_d+1.\] Note for any $N\in\mathbb{N}$, there exists a unique
$k\in\mathbb{N}\cup\{0\}$ such that $N_k<N\leq N_{k+1}$, from which
we can easily deduce \[kM_d\leq\sum_{n=1}^N\lambda_d({\mathcal
E}_n(\psi)^d)\leq(k+1)(M_d+1).\]
 By the assumed truth of
Conjecture \ref{conjecture 91} we have for almost all
$\mathbf{x}\in(\mathbb{R}/\mathbb{Z})^d$,
\begin{align*}
M(N,\mathbf{x})&\triangleq\sharp\{n\in\mathbb{N}:n\leq N_k,\
\mathbf{x}\in{\mathcal E}_n(\psi)^d\} \geq M(N_k,\mathbf{x})\geq
k\\&\geq\frac{\displaystyle\sum_{n=1}^N\lambda_d({\mathcal
E}_n(\psi)^d)}{\displaystyle M_d+1}-1.\end{align*} Note also
\[\int_{(\mathbb{R}/\mathbb{Z})^d}M(N,\cdot)=\sum_{n=1}^N\lambda_d({\mathcal
E}_n(\psi)^d).\] Consequently,  the above lower bound is best
possible despite some loss of constant.

Conjecture \ref{conjecture 91} might be too strong to hold, so we
 instead propose a weaker version.

 \begin{Conjecture}\label{conjecture 92}
There exists a universal constant $M_{d,\gamma}>0$ depending only on
$d\in\mathbb{N}$ and $\gamma\in(0,1)$ such that if
$\sum_{n=1}^{\infty}\lambda_d({\mathcal E}_n(\psi)^d)\geq
M_{d,\gamma}$, then $\lambda_d(\cup_{n=1}^{\infty}{\mathcal
E}_n(\psi)^d)\geq\gamma.$
\end{Conjecture}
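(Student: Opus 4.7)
I would pursue a block-and-bootstrap strategy: first harvest a universal amount of measure per super-block via Theorem \ref{theorem 72} (which is unconditional for $d \geq 2$ by virtue of the Pollington--Vaughan proof of Sprind\u{z}uk's conjecture, and conditional on Duffin--Schaeffer via Theorem \ref{theorem 17} in the case $d = 1$), and then amplify across super-blocks to push the measure of the union arbitrarily close to $1$.

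Carry out the usual normalisations of Section 2: assume $\psi(n) \leq 1/2$ and $\psi(n) \geq 1/n$ whenever $\psi(n) \neq 0$, so that (\ref{formula 25555}) gives $P(m,n) \ll 1$ whenever $m, n$ lie in distinct blocks $\Delta_{h_1}, \Delta_{h_2}$. Partition the blocks $\{\Delta_h\}$ into consecutive super-blocks $\mathcal{B}_1, \ldots, \mathcal{B}_K$ with $\sum_{n \in \mathcal{B}_j} \lambda_d({\mathcal E}_n(\psi)^d) \in [1, 2]$; the hypothesis $\sum_n \lambda_d({\mathcal E}_n(\psi)^d) \geq M_{d,\gamma}$ forces $K \gtrsim M_{d,\gamma}$. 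Applying Theorem \ref{theorem 72} to the restriction of $\psi$ to each $\mathcal{B}_j$ then yields $\lambda_d(U_j) \geq c_d$ for $U_j := \bigcup_{n \in \mathcal{B}_j} {\mathcal E}_n(\psi)^d$, where $c_d > 0$ depends only on $d$.

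The crux is to show that $\lambda_d\big(\bigcup_{j \leq K} U_j\big) \to 1$ as $K \to \infty$, uniformly in $\psi$. For this I would try to establish, for every $r \geq 2$ and every distinct tuple $j_1 < \cdots < j_r$, a higher-order correlation bound
\[
\lambda_d\big(U_{j_1} \cap \cdots \cap U_{j_r}\big) \;\leq\; D_d^{\,r}\, \prod_{s=1}^{r} \lambda_d(U_{j_s}),
\]
with $D_d$ depending only on $d$. Granted such an estimate, a Janson-type inclusion--exclusion expansion would dominate $\lambda_d\big(\bigcap_{j \leq K} U_j^c\big)$ by $\prod_{j \leq K}(1 - \tilde c_d \lambda_d(U_j)) \leq (1 - \tilde c_d c_d)^K$, which decays geometrically in $K$; choosing $M_{d,\gamma}$ so that $(1 - \tilde c_d c_d)^{M_{d,\gamma}} \leq 1 - \gamma$ would then deliver the conjecture.

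The main obstacle is the multi-block correlation bound above. The pairwise case ($r = 2$) is implicit in the Strauch / Pollington--Vaughan estimate (\ref{formula 2222}) combined with the super-block separation (\ref{formula 25555}), but the pairwise second-moment method in the style of Erd\"{o}s--R\'{e}nyi only delivers $\lambda_d\big(\bigcup_{j \leq K} U_j\big) \geq 1/C$ for an absolute constant $C$ independent of $K$, so pairwise information alone cannot push the measure past a fixed threshold --- this is precisely the barrier that Conjecture \ref{conjecture 92} asks us to overcome. For $r \geq 3$, the joint behaviour of coprimality constraints across many denominators must be controlled simultaneously, and the Mertens-type argument underlying (\ref{formula 25}) handles only two prime-overlap factors at a time. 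I expect that a proof would ultimately require either a quantitative equidistribution theorem for Farey fractions under joint coprimality across several denominators, or a dedicated Janson-type exponential correlation inequality tailored to the algebraic structure of the $U_j$; absent such an input, the second-moment methods employed throughout the paper appear insufficient to close the gap.
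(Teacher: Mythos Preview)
The statement you are attempting to prove is \emph{Conjecture}~\ref{conjecture 92}, which the paper explicitly poses as an open problem in its final section ``Further questions''; there is no proof in the paper to compare your proposal against. The paper introduces it only as a weakening of Conjecture~\ref{conjecture 91}, with the remark that the stronger form ``might be too strong to hold,'' and leaves both entirely unresolved.

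Your proposal is an honest sketch that correctly identifies the central difficulty. The block decomposition and the appeal to Theorem~\ref{theorem 72} to extract a uniform amount of measure per super-block are sound, and you are right that the pairwise Erd\H{o}s--R\'enyi bound only yields $\lambda_d(\bigcup_j U_j)\geq 1/C$ for some absolute $C$, which cannot be pushed toward $1$ without additional input. Your proposed route through higher-order correlation bounds and a Janson-type inequality is a natural line of attack, but as you yourself concede, the Strauch/Pollington--Vaughan overlap estimate~(\ref{formula 2222}) and the Mertens argument~(\ref{formula 25}) are inherently two-point tools and give no control on $r$-fold intersections for $r\geq 3$. So the gap you name --- the absence of any multi-block correlation estimate --- is genuine, and it is precisely why the statement remains a conjecture rather than a theorem in the paper.
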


\end{document}